\documentclass[leqno,10pt]{article}

\usepackage{fullpage}
\usepackage{latexsym}
\usepackage[centertags]{amsmath}
\usepackage{amssymb,amsthm,amsfonts}
\usepackage{color}
%
\newtheorem{theorem}{Theorem}
\newtheorem{lemma}[theorem]{Lemma}
\newtheorem{prop}[theorem]{Proposition}
\newtheorem{corl}[theorem]{Corollary}
\newtheorem{defn}[theorem]{Definition}
\theoremstyle{remark}
\newtheorem*{rmk}{Remark}

\numberwithin{theorem}{section}


\newcommand{\bydef}{:=}

\newcommand{\charf}{\mbox{{\text 1}\kern-.24em {\text l}}}

\newcommand{\defby}{=:}
\newcommand{\del}{\partial}
\newcommand{\dist}{{\mbox{dist}}}
\renewcommand{\div}{\operatorname{div}}

\newcommand{\eps}{\varepsilon}

\renewcommand{\geq}{\geqslant}

\newcommand{\lap}{\operatorname{\Delta}}
\newcommand{\lapi}{\operatorname{{\Delta_\infty}}}
\renewcommand{\leq}{\leqslant}

\newcommand{\oo}[1]{\frac{1}{#1}}
\newcommand{\openR}{{{\text I}\kern-.22em {\text R}}}

\newcommand{\R}{{\mathbb R}}
\newcommand{\Rd}{{\R^d}}

\newcommand{\RR}{{\mathbb R}}

\newcommand{\Fto}{{\sideset{_{2}}{_{1}}{\operatorname{F}}}}

%
%
\numberwithin{equation}{section}
%
%
\parskip 4pt
\parindent 4pt

\begin{document}

\title
{\bf A porous medium equation
involving the infinity-Laplacian. Viscosity solutions and asymptotic behaviour
}

\author{Manuel Portilheiro\footnote{Dpto.\ de Matem\'aticas, Univ.\ Aut\'onoma de Madrid, Spain. \texttt{manuel.portilheiro@uam.es}}\\[4pt]
Juan Luis V\'azquez\footnote{Dpto.\ de Matem\'aticas, Univ.\ Aut\'onoma de Madrid, Spain. \texttt{juanluis.vazquez@uam.es}}}

\date{\today}
\maketitle

\begin{abstract}
We study a nonlinear porous medium type equation involving the infinity Laplacian operator.
We first consider the problem posed on a bounded domain and prove existence of maximal nonnegative viscosity solutions. Uniqueness is obtained for strictly positive solutions with Lipschitz in time data. We also describe the asymptotic behaviour for the Dirichlet problem in the class of maximal solutions.  We then discuss the Cauchy problem posed in the whole space. As in the standard porous medium equation (PME), solutions which start with
compact support exhibit a free boundary propagating with finite speed, but such propagation takes place only in the direction of the spatial gradient. The description of the asymptotic behaviour of the Cauchy Problem shows that the asymptotic profile and the rates of convergence and propagation agree for large times with a one-dimensional PME.
\end{abstract}

\allowdisplaybreaks

\section{Introduction}\label{S:intro}

In order to understand modes of nonlinear diffusion with preferential propagation in some directions, we  consider here the following variation of the porous medium equation,
\begin{equation}\label{E:ipme}
	\rho_t = \lapi (\rho^m),
\end{equation}
where $m>1$ and $\lapi$ denotes the 1-homogeneous version of the infinity-Laplacian,
\[
	\lapi u \bydef |Du|^{-2} \sum_{i,j=1}^d u_{x_ix_j} u_{x_i} u_{x_j}.
\]
This equation is related to the standard porous medium equation (PME): $\rho_t - \Delta(\rho^m) = 0$. It represents a
diffusion of porous medium type, but taking place only in the direction of the spatial gradient of $\rho$. Moreover, if  a solution of \eqref{E:ipme} is radial (i.\,e., $u=u(|x|,t)$), it corresponds exactly to a solution of a $1$-$d$ PME with spatial variable $r=|x|$.

Here we examine the general non-radial case with the intention of proving that there exists a natural evolution process that still looks like a $1$-$d$ directional PME, at least for large times. However, in this case the diffusion chooses the direction of the gradient, that varies with space and time. The first difficulty in devising a theory is to define a suitable concept of generalized solution for this nonlinear equation, which is formally parabolic but can be degenerate or singular at some points: Here we use a  well-known trick of the PME theory, the so-called density-to-pressure transformation, and introduce the variable
\begin{equation}\label{equiv}
	u \bydef \frac{m}{m-1}\, \rho^{m-1}.
\end{equation}
Proceeding formally, it is easy to see that, if everything is smooth, $u$ must solve
\begin{equation}\label{E:mu}
	u_t = k\,u \lapi u + |Du|^2
\end{equation}
with $k = m-1>0$. We propose to  work out the existence and uniqueness theory for the latter equation, translating back the results to equation \eqref{E:ipme} via \eqref{equiv}. We study such problems both in a bounded domain and in the whole space, and we establish regularity, finite propagation, and asymptotic behaviour under convenient assumptions on the data.

\subsection{\bf Results on a bounded domain}

We couple equation \eqref{E:mu} with initial and boundary data:
\begin{equation}\label{E:data_g}
	u(x,t) = g(x,t)\;\; \text{on $\Gamma$},
\end{equation}
where $\Gamma = \del_p Q$ is the parabolic boundary if our domain $Q = \Omega\times[0,T]$, $\Omega\subset \Rd$. As mentioned above, if the problem is posed in a ball with radially symmetric data it is observed that the solutions are equivalent to solutions for the $1$-$d$ porous medium equation, for which there is a weak theory that gives existence and uniqueness. Such a theory is not available for more general solutions of the fully-nonlinear equation \eqref{E:mu}. Therefore, we introduce the concept of viscosity solution. Since the equation is both singular and degenerate, we adapt the modified notion of viscosity solution introduced in \cite{CV} and \cite{BV} and the viscosity method to tackle degenerate problems. Our first main result concerns
solutions with strictly positive data.

\begin{theorem}\label{T:exist_pos}
Let $\Omega$ be an open and bounded domain in $\Rd$. For each $g\in C(\Gamma)$, $g \geq c$, there exists a
viscosity solution $u\in C(\overline{Q})$ of \eqref{E:mu} satisfying $u = g$ on $\Gamma$. Moreover, $u\geq c$
and the modulus of continuity of $u$ can be estimated in terms of the modulus of continuity of $g$; in particular, if
$g$ is Lipschitz, then so is $u$. If $g$ is Lipschitz on the lateral portion of $\Gamma$, then the solution is unique.
\end{theorem}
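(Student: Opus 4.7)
The plan is to prove existence via Perron's method in the modified viscosity framework of \cite{CV} and \cite{BV} (designed precisely to accommodate the singularity of $\lapi$ at critical points of $u$), and uniqueness via a comparison principle proved by doubling variables. The lower bound $g \geq c > 0$ is essential, as it removes the degeneracy of \eqref{E:mu} at $u=0$; the Lipschitz assumption on the lateral boundary is used to construct the auxiliary barrier that closes the uniqueness argument.

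First I would exhibit elementary barriers. Since the right-hand side of \eqref{E:mu} vanishes on constants, $u \equiv c$ and $u \equiv M \bydef \sup_\Gamma g$ are classical solutions, yielding $c \leq u \leq M$ by comparison (this gives the lower bound $u \geq c$ asserted in the theorem). For the boundary attainment I would build, at each $(x_0,t_0)\in\Gamma$, local sub- and super-barriers touching $g$ there; in the Lipschitz case they can be chosen of affine form $g(x_0,t_0) \pm A(|x-x_0|+|t-t_0|)$ with $A$ large, while for a general continuous $g$ one uses cone-like barriers with slope dictated by the modulus of continuity of $g$, plus a short-time correction near parabolic corners.

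Perron's method then produces the solution. Set
\[
u(x,t) \bydef \sup\{v(x,t) : v \text{ is a modified viscosity subsolution of \eqref{E:mu}},\ c \leq v \leq M,\ v \leq g \text{ on } \Gamma\}.
\]
The standard envelope argument shows that $u^*$ is a subsolution and $u_*$ a supersolution; the local barriers force $u^* \leq g \leq u_*$ on $\Gamma$; and the comparison principle (see next paragraph) gives $u^* \leq u_*$ in $Q$. Hence $u = u^* = u_* \in C(\overline{Q})$ with $u|_\Gamma = g$. The modulus of continuity of $u$ is obtained by comparing $u$ with its space-time translates $u(\,\cdot+h,\,\cdot+\tau)$ on the overlap of $Q$ and its shift, exploiting the translation invariance of \eqref{E:mu}; the boundary modulus of $g$ on the overlap of the parabolic boundaries then propagates into $Q$, and Lipschitz $g$ yields Lipschitz $u$.

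The main obstacle is the comparison principle underlying uniqueness. By doubling variables with the quadratic penalization $\phi_\eps(x,y,t,s)=\tfrac{1}{2\eps}(|x-y|^2+(t-s)^2)$ and examining the modified viscosity inequalities at the maximizer of $u_1(x,t)-u_2(y,s)-\phi_\eps(x,y,t,s)$, one reduces to an algebraic comparison of the matrix inequalities supplied by the parabolic theorem of sums. The modified notion of \cite{CV}, \cite{BV} is what makes this workable: it restricts the admissible test functions so that the singular set $\{|Du|=0\}$ can be bypassed at touching points. The product structure $u\,\lapi u$ is not in standard monotone form, but the strict positivity $u\geq c$ and the Lipschitz assumption on $g|_{\text{lateral}}$ allow one to add a small correction $\eta\,\dist(x,\partial\Omega)$ which both keeps the penalized maximum in the interior and absorbs the lower-order term $|Du|^2$; sending first $\eps\to 0$ and then $\eta\to 0$ closes the argument. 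Carrying out this final step cleanly in the presence of the singular nonlinearity is the delicate part I anticipate having to work around.
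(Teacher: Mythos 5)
Your Perron strategy is genuinely different from the paper's route, and unfortunately it runs into two obstacles that the paper's actual construction is specifically designed to avoid. The paper does \emph{not} use Perron's method: it regularizes the equation to a uniformly parabolic one with smooth coefficients by studying
\[
f_t = \eps\lap f + \frac{k\beta_c(f)}{|Df|^2+\delta^2}\langle D^2 f\cdot Df, Df\rangle + |Df|^2,
\]
proves uniform modulus-of-continuity estimates for the solutions $f^{\eps,\delta}$ by explicit barrier constructions at the boundary (Theorems~\ref{T:lip_t0}--\ref{T:lip_x}), and then extracts a limit as $\eps\to 0$ followed by $\delta\to 0$ using Arzel\`a--Ascoli and stability of viscosity solutions. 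Existence thus comes from compactness, not from comparison, and uniqueness is a separate statement proved only under the extra Lipschitz hypothesis on the lateral boundary.

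The first problem with your proposal is concrete: the quadratic penalization $\phi_\eps(x,y,t,s)=\tfrac{1}{2\eps}(|x-y|^2+(t-s)^2)$ is incompatible with the modified viscosity notion for this singular operator. When the doubling procedure produces a maximizer with $x_j=y_j$, the associated test function has $D\phi=0$ but $D^2\phi=\tfrac{1}{\eps}I\neq 0$, so the CGG-type definition forces you into the inequality $\phi_t\leq k\phi\,\Lambda(D^2\phi)$ with $\Lambda=\tfrac{1}{\eps}$, which degenerates as $\eps\to 0$. The standard remedy, used throughout the paper (Lemma~\ref{L:weak_sol_cond} and Theorem~\ref{T:comparison_pos}), is the quartic penalization $\tfrac{j}{4}|x-y|^4+\tfrac{j}{2}(t-s)^2$, whose $x$-Hessian vanishes at coincidence; the weakened condition ``$\phi_t\leq 0$ when $D\phi=D^2\phi=0$'' then closes the argument.

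The second problem is structural. Perron's method needs the comparison principle in the form ``sub $\leq$ super on $\Gamma$ implies sub $\leq$ super in $Q$.'' But the equation $u_t=ku\,\lapi u+|Du|^2$ is not proper (no monotonicity in $u$, and the $|Du|^2$ term has the ``wrong'' sign), so a naive doubling does not close. The paper's fix is a time reparametrization $\bar v(x,t)=h'(t)v(x,h(t))$ with $h(t)=(e^{\gamma t+\gamma^3}-1)/\gamma$, which turns a supersolution into a strict supersolution; but checking $\bar v>u$ on the lateral boundary under this reparametrization is exactly where the Lipschitz-in-$t$ hypothesis enters (Theorem~\ref{T:comparison_pos_strong}). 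For merely continuous lateral data the paper has only the weak comparison with strict boundary inequality, which is insufficient to conclude $u^*\leq u_*$ in Perron's method. Your suggestion to add $\eta\,\dist(x,\partial\Omega)$ does not substitute for this: it neither restores properness nor produces a strict supersolution in the right sense. Consequently your approach would, at best, yield existence only for Lipschitz boundary data, while the theorem asserts existence for all $g\in C(\Gamma)$ with $g\geq c$. The regularization route is what makes the general continuous case reachable, because the classical comparison for the nondegenerate approximating problems is available from \cite{LSU} regardless of the regularity of $g$.
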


Our main interest lies however in treating nonnegative data that may vanish somewhere, in the spirit of
porous medium equations with finite speed of propagation. This is not so easy. If we do not assume the data to be strictly
positive, we can still obtain a {\sl maximal viscosity solution}.

\begin{theorem}\label{T:exist_maximal}
Given $g\in C(\Gamma)$ with $g\geq 0$, there exists a viscosity solution of \eqref{E:mu}, $\bar{u}\in C(\overline{Q})$,
satisfying $\bar{u} = g$ on $\Gamma$, and such that if $v$ is another viscosity solution of \eqref{E:mu} with the
same initial and boundary data, then $\bar{u} \geq v$. Moreover, if $g$ is Lipschitz continuous, then $\bar{u}$
is also Lipschitz continuous.
\end{theorem}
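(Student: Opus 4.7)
The plan is to approximate $g$ from above by strictly positive boundary data, apply Theorem~\ref{T:exist_pos} to obtain an associated sequence of viscosity solutions, and pass to a decreasing limit. For each $n \geq 1$, set $g_n \bydef g + 1/n$. Then $g_n \in C(\Gamma)$, $g_n \geq 1/n > 0$, and $g_n$ shares the modulus of continuity of $g$; in particular $g_n$ is Lipschitz whenever $g$ is. Theorem~\ref{T:exist_pos} supplies a viscosity solution $u_n \in C(\overline{Q})$ of \eqref{E:mu} with $u_n = g_n$ on $\Gamma$, $u_n \geq 1/n$, and with modulus of continuity controlled uniformly in $n$ by that of $g$.

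The next step is to show that $\{u_n\}$ can be chosen so as to decrease to a continuous limit. Comparing $u_{n+1}$ and $u_n$: since $u_{n+1} = g_{n+1} < g_n = u_n$ on $\Gamma$ and $u_n \geq 1/n > 0$, a comparison principle between a viscosity subsolution (here $u_{n+1}$) and a strictly positive viscosity supersolution (here $u_n$)---which is the core of the uniqueness assertion of Theorem~\ref{T:exist_pos}---yields $u_{n+1} \leq u_n$ throughout $\overline{Q}$. The uniform modulus estimate gives equicontinuity of $\{u_n\}$, so Arzel\`a--Ascoli combined with the monotonicity yields uniform convergence $u_n \to \bar{u} \in C(\overline{Q})$ with $\bar{u} = g$ on $\Gamma$. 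Standard stability of viscosity sub- and supersolutions under uniform limits then shows that $\bar{u}$ is itself a viscosity solution of \eqref{E:mu}.

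For maximality, let $v \in C(\overline{Q})$ be any viscosity solution with $v = g$ on $\Gamma$. Since $v = g < g_n = u_n$ on $\Gamma$ and $u_n \geq 1/n > 0$, the same comparison gives $v \leq u_n$ on $\overline{Q}$ for every $n$; letting $n \to \infty$ yields $v \leq \bar{u}$. If $g$ is Lipschitz continuous, each $g_n$ has the same Lipschitz constant as $g$, so Theorem~\ref{T:exist_pos} yields a uniform Lipschitz bound on $u_n$, and this bound is inherited by the uniform limit $\bar{u}$.

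The main obstacle is the one-sided comparison principle invoked twice above: if $v$ is a viscosity subsolution and $w$ a viscosity supersolution of \eqref{E:mu} with $w \geq c > 0$ on $\overline{Q}$ and $v \leq w$ on $\Gamma$, then $v \leq w$ on $\overline{Q}$. Doubling-the-variable arguments for the fully nonlinear operator on the right-hand side of \eqref{E:mu} require positivity of $u$ to control the coefficient $ku$ in front of $\lapi u$ and to cope with the $|Du|^{-2}$ singularity inside $\lapi$; once at least one of the two competing functions is bounded below by a positive constant, the adapted viscosity method of \cite{CV,BV} that underlies Theorem~\ref{T:exist_pos} should go through. The delicate point will be extracting the comparison while allowing the subsolution $v$ itself to vanish, since this mixed setting is exactly what is not directly covered by the uniqueness statement of Theorem~\ref{T:exist_pos}.
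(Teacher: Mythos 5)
Your overall plan---monotone approximation of $g$ by $g_n = g + 1/n$, passage to a decreasing limit via the uniform modulus estimates, and comparison against each $u_n$---coincides with the paper's strategy. However, two points that you treat as routine are in fact the substantive parts of the argument, and your treatment of each leaves a genuine gap.

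First, "standard stability of viscosity sub- and supersolutions under uniform limits" does not suffice to show $\bar u$ is a viscosity solution in the sense used here. The paper's notion of supersolution (Definition~\ref{D:visc_supsol}) has \emph{two} conditions: the usual test-function inequality (i), and a geometric condition (ii) requiring that classical \emph{moving free-boundary} subsolutions, once strictly separated below $u$, cannot later cross it. Condition (ii) is nonstandard and is precisely what makes this notion of solution compatible with free boundaries; it is not captured by any stability theorem for viscosity solutions in the sense of \cite{CGG} or \cite{CIL}. The paper verifies (ii) directly: if $v$ is such a classical subsolution strictly separated from $\bar u$ at time $t_1$, then $v$ is also strictly below each $u^n$, and since $v$ is classical and positive on its support one may apply the comparison principle for strictly positive solutions to each $u^n$ and pass to the limit. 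You must include an argument of this type; otherwise you have only shown that $\bar u$ satisfies condition (i).

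Second, for maximality you explicitly invoke a mixed comparison principle (subsolution $v$ allowed to vanish, supersolution $u_n \ge 1/n > 0$) and then candidly note at the end that this is not covered by the results for strictly positive solutions. This is indeed the gap. The paper sidesteps the need for such a global mixed comparison by a localization argument that you should adopt: suppose a viscosity solution $v$ with the same data exceeds $\bar u$ somewhere. For $n$ large, $v$ must touch $u^n$ from below at some \emph{first} time, at an interior point $P_n$ (interior because $v = g < g_n = u^n$ on $\Gamma$). At $P_n$ one has $v(P_n) = u^n(P_n) \ge 1/n > 0$, so by continuity there is a small backward cylinder $Q_n \ni P_n$ on which $v$ is bounded below by a positive constant and on whose parabolic boundary $v < u^n$. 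Now \emph{both} competitors are uniformly positive on $Q_n$, and the comparison theorem for strictly positive solutions applies there, contradicting the first touching. This local argument replaces the global mixed comparison you were hoping to prove, and it is the key idea you are missing.
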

Maximal viscosity solutions are our choice of good class of solutions to work with.

Let us outline the organization of the proofs of this and related results. In Section~\ref{S:viscosity_solutions} we define viscosity sub- and
supersolutions for this problem and prove a useful lemma to simplify testing a solution.

In Section~\ref{S:pos_solutions} we consider strictly positive solutions. We prove two comparison results: a ``weak''
and a ``strong'' version. The strong version is valid for functions which are Lipschitz on the lateral boundary, and
is in fact just a regular comparison result.

In Section~\ref{S:special_solutions} we compute some explicit solutions. Of particular interest are the
\textit{Barenblatt functions}, which are similarity solutions, and the traveling waves.

For existence, the idea is to regularize the problem as is done in \cite{JK} for the infinity Laplacian evolution, and obtain estimates independent of the approximation process. In Section~\ref{S:maximal_solutions} we prove the existence of maximal viscosity nonnegative solutions and give some properties of these solutions, in particular we prove a uniqueness result for this problem in a nice domain (slightly more than star shaped).

Section~\ref{S:exist} is the longest and contains the more technical proof of the main result, Theorem~\ref{T:exist_pos}.

To complete the study in bounded domains, Section \ref{sec.asymp.bdd} discusses the large time behaviour of solutions of the Dirichlet problem, which is described in the following theorem.
\begin{theorem}\label{T:large_time_Dirichlet}
Let $u$ be the maximal solution of \eqref{E:mu}--\eqref{E:data_g} with $g(x,t) = 0$ for $x\in \del\Omega$. Then,
\begin{equation}
	\lim_{t\to\infty} t\,u(x,t) = U(x) = U_\Omega(x) = [F_\Omega(x)]^{m-1},
\end{equation}
where $U$ is a Lipschitz continuous function, positive in $\Omega$ which vanishes on $\del\Omega$.
\end{theorem}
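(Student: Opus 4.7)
The plan is to realize $U_\Omega$ as the profile of a separable ``Friendly Giant'' solution $u^\ast(x,t)=U_\Omega(x)/t$ of \eqref{E:mu}, and then to sandwich $t\,u(x,t)$ between time-shifted versions of this giant and of rescaled subsolutions. Substituting $u=U(x)/t$ into \eqref{E:mu} yields the stationary problem
\begin{equation}\label{eq:fg_plan}
  k\,U\,\lapi U + |DU|^{2} + U = 0 \text{ in } \Omega, \qquad U = 0 \text{ on } \partial\Omega,
\end{equation}
so the first task is to produce a unique nonnegative viscosity solution $U_\Omega$ of \eqref{eq:fg_plan}, Lipschitz up to $\partial\Omega$ and strictly positive inside; the density-type variable $F_\Omega:=U_\Omega^{1/(m-1)}$ is then well-defined and continuous, giving the representation $U_\Omega=F_\Omega^{m-1}$.

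To construct $U_\Omega$ I would use the standard friendly-giant approximation: let $u_n$ be the maximal solution of \eqref{E:mu} with $u_n(\cdot,0)\equiv n$ and zero lateral data. Comparison (Section~\ref{S:pos_solutions}) yields a monotone family in $n$, bounded above for $t\geq t_1>0$ uniformly in $n$ by an explicit decaying barrier built from an enveloping 1-d PME pressure giant (place $\Omega$ inside a slab or ball where such a solution is explicit). By viscosity stability the limit $u^\ast:=\lim_n u_n$ solves \eqref{E:mu}. Since \eqref{E:mu} is invariant under $v(x,t)=\lambda u(x,\lambda t)$ and $u_{\lambda n}(x,t)=\lambda u_n(x,\lambda t)$ for every $\lambda>0$, sending $n\to\infty$ gives $u^\ast(x,t)=\lambda u^\ast(x,\lambda t)$, hence $u^\ast(x,t)=U_\Omega(x)/t$ with $U_\Omega:=u^\ast(\cdot,1)$. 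Uniqueness for \eqref{eq:fg_plan} follows from the comparison principle applied to two positive stationary profiles (time-shift one slightly to create strict ordering, then let the shift vanish), and Lipschitz regularity of $U_\Omega$ up to $\partial\Omega$ is inherited from the estimate of Theorem~\ref{T:exist_maximal}.

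With $U_\Omega$ in hand, the upper bound is straightforward: fix $t_0>0$; because $u(\cdot,t_0)\in C(\overline\Omega)$ vanishes on $\partial\Omega$ and $U_\Omega$ is strictly positive inside with the same boundary vanishing, one finds $\tau>0$ with $u(\cdot,t_0)\leq U_\Omega/\tau$ on $\overline\Omega$. Comparison with the time-shifted giant $U_\Omega(x)/(\tau+s)$ gives
\[
  u(x,t_0+s)\leq \frac{U_\Omega(x)}{\tau+s}, \qquad s\geq 0,
\]
so $\limsup_{t\to\infty} t\,u(x,t)\leq U_\Omega(x)$. For the matching lower bound, a direct computation using \eqref{eq:fg_plan} shows that $\eta\,U_\Omega(x)/(t+C)$ is a viscosity subsolution of \eqref{E:mu} for every $\eta\in(0,1]$ and $C\geq 0$. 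Using Barenblatt-type or traveling-wave subsolutions (Section~\ref{S:special_solutions}) seeded where the datum is positive, and iterating them over translations and rotations, one spreads positivity throughout $\Omega$ in finite time, producing $u(\cdot,T)\geq\eta\,U_\Omega/(T+C)$ on $\overline\Omega$; comparison with the subsolution then yields $\liminf_{t\to\infty} t\,u(x,t)\geq\eta\,U_\Omega(x)$.

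To upgrade $\eta$ to $1$ and identify the full limit, extract any sequence $t_k\to\infty$ and set $u_k(x,s):=t_k u(x,t_k s)$; the $u_k$ are again solutions of \eqref{E:mu}, uniformly bounded and uniformly Lipschitz by Theorem~\ref{T:exist_maximal}, so up to a subsequence $u_k\to u_\infty$ locally uniformly. Viscosity stability forces $u_\infty$ to solve \eqref{E:mu} and to inherit the scaling symmetry $u_\infty(x,s)=\lambda u_\infty(x,\lambda s)$, so $u_\infty(x,s)=W(x)/s$ with $W$ satisfying \eqref{eq:fg_plan}. Uniqueness yields $W=U_\Omega$, independently of the subsequence, and the convergence $t\,u(x,t)\to U_\Omega(x)$ follows. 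The main obstacle is the positivity-spreading step: because the diffusion acts only in the direction of $Du$, care is required to guarantee that the maximal solution fills all of $\Omega$ and not merely a proper subdomain; once this is in place, scaling invariance, viscosity stability and uniqueness for \eqref{eq:fg_plan} close the argument.
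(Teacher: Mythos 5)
Your route is genuinely different from the paper's, and it has a real gap. The paper does not construct the Friendly Giant $U_\Omega$ in advance and then sandwich; instead it first proves a B\'enilan--Crandall type monotonicity estimate $u_t \geq -u/t$ for the maximal solution (using the scaling $\mathcal{T}_\lambda[u](x,t)=\lambda u(x,\lambda t)$, the comparison principle for the positive approximations, and maximality via Theorem~\ref{T:independent_approx}). Combined with the two-sided bound $[F_1(x)]^{m-1}\leq t\,u(x,t)\leq [F_2(x)]^{m-1}$ of Proposition~\ref{T:asymptotic_1} (obtained by sandwiching between radial separated-variable solutions on inscribed and circumscribed balls), the rescaled quantity $v(x,\tau)=[\alpha e^{(m-1)\tau}u(x,e^{(m-1)\tau})]^{1/(m-1)}$ is bounded and nondecreasing in $\tau$, so it converges pointwise to a profile $G_\Omega$ with \emph{no} subsequence extraction and \emph{no} uniqueness statement for the stationary problem being needed. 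The identification of $G_\Omega$ as a solution of the eigenvalue problem is a separate result (Theorem~\ref{T:exist_eevp}) established afterwards.

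The gap in your proposal is the step where you pass from a subsequential limit $u_k(x,s)=t_k u(x,t_k s)\to u_\infty$ to the scaling symmetry $u_\infty(x,s)=\lambda u_\infty(x,\lambda s)$. You assert this follows by ``viscosity stability,'' but the rescaled functions $u_k$ do not individually have this symmetry, and without some monotonicity along the flow (or a Lyapunov functional) there is no reason a subsequential limit inherits it; you cannot conclude $u_\infty$ is separable, so you cannot invoke uniqueness of the stationary profile to make the limit subsequence-independent. The argument is circular: you need the scaling symmetry to get a stationary profile, and uniqueness of the stationary profile to get the scaling symmetry. The monotonicity estimate is precisely what breaks this circle in the paper, and it is the ingredient your plan is missing. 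Two secondary issues: (i) uniqueness of the positive stationary profile on a general $\Omega$ is asserted (via the time-shift trick) but not established in the paper and is nontrivial for this singular/degenerate operator; the paper's argument does not need it. (ii) The positivity-spreading step you flag as ``the main obstacle'' is handled in the paper more directly, by comparing from below with a radial separated-variable solution supported in an inscribed ball $B_1\Subset\Omega$ (Proposition~\ref{T:asymptotic_1}); your plan of iterating Barenblatt/traveling-wave pieces over translations and rotations is considerably more delicate here because the diffusion is directional.
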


As an interesting consequence of our study here, we obtain existence of solution of the elliptic eigenvalue problem
\begin{equation}
	-\lapi G = \lambda G^p, \quad p<1.
\end{equation}
As in \cite{CV} and \cite{BV}, we approximate the data with a strictly positive function and then take the limit. This allows us to obtain the asymptotic behaviour of solutions for the Dirichlet problem in Theorem~\ref{T:large_time_Dirichlet} as saying that $u(x,t)$ approaches the so-called Friendly Giant, $U(x,t)= t^{-1}F_\Omega(x)$, whose existence is well-known in the PME theory. Our results are also consistent with the asymptotic profile obtained by Lauren\c{c}ot and Stinner \cite{LS} for the
infinity heat equation. \begin{rmk}
We will show that the function $F_\Omega$ is in fact a constant multiple of the solution of the elliptic eigenvalue problem in $\Omega$ \eqref{E:eevp}.
\end{rmk}

\subsection{\bf Results on the whole space}

In Section~\ref{S:Cauchy} we discuss the Cauchy problem, posed in the whole space for nonnegative solutions. We assume that the initial function is nonnegative and bounded. We prove existence of a maximal solution. We can not obtain a complete comparison result in this case, but for compactly supported data the results for maximal solutions are valid,
namely the propagation of the free boundary and its regularity in the form of the following theorem.

\begin{theorem}\label{thm.asbeh} Let us assume  that $\rho_0:\Rd\to\R$ is continuous, nonnegative, bounded and compactly supported. If $u(x,t)$ is the maximal solution of the Cauchy problem \eqref{E:mu_cauchy}, then there exists a constant $R>0$ which depends on $m,n$ and the initial data $\rho_0$, such that as $t\to \infty $, $\rho(x,t) =
\left[ \frac{m-1}{m}u \right]^{1/(m-1)}$ satisfies
\begin{equation}
	t^{\frac{1}{m+1}}\left|\rho(x,t)-\beta_R(x,t)\right|\to 0
\end{equation}
uniformly in $x\in \RR^n$, where $\beta_R$ is the Barenblatt function defined in \eqref{E:Barenblatt_def}. Moreover, we have
convergence of the supports
\begin{equation}
B_{R_1(t)}(0) \subset \{x: \rho(x,t)>0 \}\subset B_{R_2(t)}(0)
\end{equation}
where $R_i(t)/t^{1/(m+1)}\to R=R(m,n,\rho_0)$.
\end{theorem}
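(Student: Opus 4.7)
\medskip

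\textbf{Proof plan.} The strategy follows the classical template for large-time asymptotics of porous medium equations, adapted to the present setting where the diffusion is directional but radial Barenblatts are exact solutions. The main point is that the explicit Barenblatt profile $\beta_R$ is built from the $1$-$d$ PME profile in the radial variable $r=|x|$, and since $\lapi$ reduces to $\partial_{rr}$ on radial functions, each $\beta_R$ is itself a solution of \eqref{E:ipme}, hence of \eqref{E:mu} after the transformation \eqref{equiv}. We therefore have a two-parameter (parameter $R>0$ plus time shift $\tau\in\R$) family of radial solutions available for barrier constructions.

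First I would set up the sandwich by Barenblatts. Because $\rho_0$ is continuous, bounded and supported in some ball $B_{r_0}(0)$, one may choose $R_+ > 0$ and $\tau_+ > 0$ so that $\beta_{R_+}(x, \tau_+) \geq \rho_0(x)$ for all $x$ (the delayed Barenblatt is tall enough and wide enough), and, given any point $x_0$ where $\rho_0(x_0)>0$, also $R_- > 0$ and $\tau_- > 0$ so that $\beta_{R_-}(x, -\tau_-) \leq \rho_0(x)$ at some positive time where the initial solution has had time to fill a small ball around $x_0$ (this uses the regularizing effect to ensure $\rho(\cdot,t_*)$ dominates a small, narrow Barenblatt). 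Applying the comparison theory developed for maximal viscosity solutions in Section~\ref{S:maximal_solutions} against these radial, strictly positive-then-vanishing barriers yields
\[
\beta_{R_-}(x, t - t_* - \tau_-) \;\leq\; \rho(x,t) \;\leq\; \beta_{R_+}(x, t+\tau_+),\qquad t\geq t_*, \ x\in\Rd.
\]
From the explicit form of $\beta_{R_\pm}$, whose supports are balls of radius $R_\pm (t\pm\tau_\pm)^{1/(m+1)}$, this immediately gives the support inclusions with $R_1(t), R_2(t)$ satisfying $R_1(t)/t^{1/(m+1)} \to R_-$ and $R_2(t)/t^{1/(m+1)} \to R_+$.

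Next I would exploit the scale invariance of \eqref{E:ipme}. For $\lambda>0$ set
\[
\rho_\lambda(y,s) \bydef \lambda^{1/(m+1)}\, \rho\!\left( \lambda^{1/(m+1)} y,\; \lambda s \right),
\]
which solves the same equation and sends the Barenblatt family to itself; in fact $\beta_R$ is left invariant. The sandwich above, written for $\rho_\lambda$, becomes uniform in $\lambda$, with time shifts $\tau_\pm/\lambda \to 0$. Using the equicontinuity provided by the Lipschitz estimates of Theorems~\ref{T:exist_pos}--\ref{T:exist_maximal} (applied to the rescaled problem), standard viscosity compactness gives a locally uniformly convergent subsequence $\rho_{\lambda_k} \to \rho_\infty$, where $\rho_\infty$ is a viscosity solution of \eqref{E:ipme} pinched between the fixed Barenblatts $\beta_{R_-}$ and $\beta_{R_+}$.

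Finally, I would identify the limit. The bound $\beta_{R_-} \leq \rho_\infty \leq \beta_{R_+}$ forces $\rho_\infty$ to be a Barenblatt $\beta_{R_*}$ with $R_- \leq R_* \leq R_+$: indeed $\rho_\infty$ is radial in the limit (since the lower and upper barriers are, and by the squeeze the angular variation of $\rho_\lambda$ is $O(1/\lambda^{\alpha})$), and by the radial reduction it is a solution of the $1$-$d$ PME having a delta-type trace at $s=0^{+}$ (by the self-similar scaling the initial data concentrate) which among the solutions pinched by $\beta_{R_\pm}$ is characterized by a single parameter. The uniqueness of $R_*$ --- independent of the subsequence --- can be obtained by improving the sandwich: one tightens $R_-$ and $R_+$ to a common value by applying the comparison at later times using Barenblatts matched to the support of $\rho(\cdot,t)$ itself (which is known, a posteriori, to grow like $t^{1/(m+1)}$). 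Passing to the limit $\lambda\to\infty$ in $\rho_\lambda(y,1) = \lambda^{1/(m+1)}\rho(\lambda^{1/(m+1)} y, \lambda)$ gives the uniform convergence $t^{1/(m+1)}|\rho(x,t) - \beta_R(x,t)| \to 0$, with $R=R_*$.

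\medskip

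The delicate step --- and the main obstacle --- is the identification of $R_*$. In the usual PME the constant of the limiting Barenblatt is determined by mass conservation, but our equation is not in divergence form, so there is no globally conserved mass. The substitute must come from a \emph{one-sided} monotone quantity (e.g.\ the radial maximum $M(t)=\sup_{|x|=r} \rho(x,t)$, or the outer radius $R_2(t)$) whose self-similar limit is precisely $R_*$. Establishing that the improved barriers indeed force $R_+(t)-R_-(t)\to 0$ after rescaling --- the contraction that makes the sandwich tight --- is where the bulk of the work lies, and it uses crucially the intrinsic $1$-$d$ character of \eqref{E:ipme} on radial profiles together with the maximality of $\rho$.
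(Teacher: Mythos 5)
Your overall scaffolding (Barenblatt sandwich from above and below, the self-similar rescaling $\rho_\lambda$, and compactness via the Lipschitz/modulus-of-continuity estimates) matches the paper's Steps~1--2 exactly, and you correctly locate the crux of the matter: since \eqref{E:ipme} is not in divergence form there is no conserved mass, so identifying the limiting Barenblatt and, more fundamentally, establishing that the limit is radial at all, is the real work. But your proposal stops short of supplying the idea that actually closes that gap, and the one justification you offer is circular: being squeezed between $\beta_{R_-}$ and $\beta_{R_+}$ with $R_-<R_+$ does \emph{not} force $\rho_\lambda$ (or its limit) to be nearly radial --- the angular oscillation can be as large as $\beta_{R_+}-\beta_{R_-}$, which is precisely what you are trying to show is small. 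Likewise, your suggestion to tighten $R_\pm$ by matching Barenblatts to the evolving support is a reasonable heuristic, but as written it presupposes the contraction you need rather than deriving it.

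The missing ingredient, and the mechanism the paper actually uses, is Aleksandrov's reflection principle (as in \cite{CVW}): for a solution with initial data supported in $B_R(0)$ one has, for all $t\geq 0$ and $r>R$,
\[
\inf_{|x|=r}\rho(x,t)\;=\;\max_{|x|=r+2R}\rho(x,t).
\]
Under the rescaling this becomes $\inf_{|x|=r}\rho_\lambda = \max_{|x|=r+2R_\lambda}\rho_\lambda$ with $R_\lambda = R\lambda^{-1/(m+1)}\to 0$, which is the quantitative near-radiality estimate you wanted but did not have. Setting $\tilde\rho_1(r)=\inf_{|x|=r}\rho_\lambda(x,1)$ and $\tilde\rho_2(r)=\max_{|x|=r}\rho_\lambda(x,1)$, these are genuine radial profiles with $\tilde\rho_1(r)=\tilde\rho_2(r+2R_\lambda)$, hence their $1$-$d$ masses differ only by $O(R_\lambda)$. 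One then sandwiches $\rho_\lambda$ between the $1$-$d$ PME solutions emanating from $\tilde\rho_1$ and $\tilde\rho_2$ and invokes the classical $1$-$d$ PME asymptotics (where mass \emph{is} conserved and pins down the Barenblatt). This is the substitute for mass conservation that your sketch gestures at but does not identify; without it the identification of $R_*$ and the radiality of the limit remain unproved.

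Two minor points: the support inclusions come out of the same Aleksandrov-plus-$1$-$d$ comparison, not merely from the crude initial sandwich (which only gives $R_1(t)/t^{1/(m+1)}\to R_-$ and $R_2(t)/t^{1/(m+1)}\to R_+$ for \emph{different} limits $R_-<R_+$); and you should be aware that the paper deliberately defers the sandwich to a time $t_1>0$ after which two $1$-$d$ Barenblatts can be fitted, rather than at $t=0$ where the lower bound may fail.
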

The Barenblatt function $\beta_R(r,t)$ is explained in formula \eqref{E:Barenblatt_def} and $R$ is its radius at $t=1$. There is to our knowledge no simple formula to express the dependence of the asymptotic constant $R$ on the data $\rho_0$. Such a difficulty is relatively frequent in problems in nonlinear mechanics, see \cite{KPV}.

The paper ends with an appendix on special solutions.

\medskip

\subsection{\bf Extension}

\noindent A natural development of the main idea of this work leads to consider similar models of propagation in a preferential direction. One option is to use interpolation of the regular Laplacian with the infinity Laplacian. We propose as the simplest example the family of equations

\begin{equation}\label{E:interpol}
	\rho_t = L_\eps (\rho^m), \qquad \mbox{where } \quad L_\eps= \eps\,\Delta + (1-\eps)\lapi
\end{equation}
with $0<\eps<1$. Since the standard $p$-Laplacian operator, $1<p<\infty$, is defined as
\begin{equation}
	\Delta_p u= \div (|Du|^{p-2}Du)=|Du|^{p-2}\sum_{i,j}u_{x_ix_j}\left\{\delta_{ij}+ (p-2)\frac{u_{x_i}}{|D u|}\frac{u_{x_j}}{|D u|}\right\}\,,
\end{equation}
if we put $\eps=1/(p-1)$ we can also write this proposed model as
\begin{equation}
	\rho_t =\eps \,|D(\rho^m)|^{2-p}\Delta_p(\rho^m).
\end{equation}
Many of the results of the present paper apply, at least partially, to the interpolated family. In particular, we note that, at least for radial solutions, \eqref{E:interpol} is like a (PME) in dimension $1+\eps(d-1)$. We will not pursue such an analysis in the present work.

\medskip

\noindent{\bf Notations}

We consider equations defined on some subdomain of the whole
Euclidean space-time $S = \Rd\times\R$. For a point $P_0 = (x_0,t) \in S$, we say that $U$ is
a parabolic neighbourhood of $P_0$ if $P_0\in Q\subset U$, where $Q$ is a cylinder centered at
$P_0$, that is, $Q = B_r(x_0)\times(t_0 - \tau,t_0]$ for some $r,t>0$. We denote by $C^+(D)$
the space of nonnegative continuous functions from $D$ to $\R$, whereas $C^{2,1}(D)$ will
denote those functions which are twice differentiable in $x$ and once in $t$. Whenever $Q$ is a
cylinder of the form $Q = \Omega\times[0,T]$, with $\Omega \subset \Rd$ open, we denote its
parabolic neighborhood by $\Gamma = \del_p Q = (\del \Omega\times[0,T])\cup(\Omega\times\{t=0\})$.

For a function $u\in C^{2,1}(D)$, $Du$ and $D^2u$ will denote the spatial gradient and the $d\times d$
matrix of second derivatives of $u$, respectively,
\[
	Du = \left( u_{x_1},\ldots, u_{x_d} \right), \;\;\;
	D^2u = \left(\left(u_{x_ix_j}\right)\right)_{i,j=1}^d.
\]

Given a symmetric $d\times d$ matrix with real coefficients $A$, we define
\[
	\Lambda(A) \bydef \max_{\omega\in S^{d-1}} (A\omega)\cdot\omega, \;\;\;
	\lambda(A) \bydef \min_{\omega\in S^{d-1}} (A\omega)\cdot\omega,
\]
in other words, the largest and smallest eigenvalues of $A$, respectively. We will also need an approximation function, $\beta_c$, which is a smooth real function satisfying $\beta_c(z) = |z|$ if $|z|\geq c$ and $\beta_c(z) \geq c/2$ everywhere.

\section{Viscosity solutions}\label{S:viscosity_solutions}

Following \cite{CV} and \cite{BV}, we define viscosity solutions for the modified equation \eqref{E:mu}. Note
that because the equation is singular at points where the gradient of the function vanishes, the
usual definition of viscosity solution needs to be adapted at the singular points. We adapt the
definition from \cite{CGG}, see also \cite{JK}.

\begin{defn}\label{D:visc_subsol}
Given $u\in C^+(\overline{Q})$, we say $u$ is a \textbf{nonnegative viscosity subsolution} of \eqref{E:mu}
in $Q$ if and only if for every $P_0\in Q$ and every function $\phi\in C^{2,1}(Q)$ which touches
$u$ from above at $P_0$, the following holds at the point $P_0$:
\[\begin{aligned}
	& \phi_t \leq k\,\phi\,\lapi\phi + |D\phi|^2 \qquad&\text{if }D\phi\neq 0,\\
	& \phi_t \leq k\,\phi\, \Lambda(D^2\phi) \qquad&\text{if }D\phi = 0.
\end{aligned}\]
\end{defn}

As in \cite{JK} we can weaken the second condition.
\begin{lemma}\label{L:weak_sol_cond}
Let the function $u$ satisfy the following: given $P_0\in Q$ and $\phi\in C^{2,1}(Q)$ such that
$u - \phi$ has an absolute maximum at the point $P_0$ and $u(P_0) = \phi(P_0)$, it follows that
at the point $P_0$
\[\begin{aligned}
	& \phi_t \leq k\,\phi\,\lapi\phi + |D\phi|^2 \qquad&\text{if }D\phi\neq 0,\\
	& \phi_t \leq 0 \qquad&\text{if }D\phi = 0,\;D^2\phi = 0.
\end{aligned}\]
Then $u$ is a viscosity subsolution of \eqref{E:mu}.
\end{lemma}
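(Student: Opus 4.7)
I would split into cases based on the behaviour of $\phi$ at $P_0$. When $D\phi(P_0)\neq 0$, hypothesis~(a) coincides with the non-singular clause of Definition~\ref{D:visc_subsol}, so there is nothing to show. When $D\phi(P_0)=0$ and $D^2\phi(P_0)=0$ we have $\Lambda(D^2\phi(P_0))=0$, so the singular clause of Definition~\ref{D:visc_subsol} reads $\phi_t(P_0)\le 0$, which is exactly~(b). The substantive case is therefore $D\phi(P_0)=0$ with $A:=D^2\phi(P_0)\neq 0$, for which the required conclusion is $\phi_t(P_0)\le k\,\phi(P_0)\,\Lambda(A)$.

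I would argue by contradiction, in the style of \cite{CGG} and \cite{JK}. Assume $\phi_t(P_0)>k\,\phi(P_0)\,\Lambda(A)+3\mu$ for some $\mu>0$. The strategy is to construct an auxiliary $C^{2,1}$ test function $\tilde\phi$ for which $u-\tilde\phi$ attains its absolute maximum at some point $\tilde P$ close to $P_0$ with $u(\tilde P)=\tilde\phi(\tilde P)$, and whose derivatives $D\tilde\phi(\tilde P)$, $D^2\tilde\phi(\tilde P)$ fall into one of the two regimes covered by the weakened hypothesis. The resulting bound on $\tilde\phi_t(\tilde P)$, combined with continuity of the derivatives of $\phi$, is designed to contradict the strict excess.

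A natural candidate perturbation is
\[
\tilde\phi_\varepsilon(x,t)=\phi(x,t)+\frac{1}{4\varepsilon}|x-x_0|^4+M(t-t_0)^2,
\]
with $\varepsilon>0$ small and $M$ large enough to dominate $\phi_{tt}(P_0)$. One verifies $\tilde\phi_\varepsilon\ge\phi\ge u$ in a parabolic neighbourhood of $P_0$, with equality at $P_0$; adding a further bump that grows rapidly away from $P_0$ upgrades the local maximum to an absolute one. At $P_0$ itself the spatial gradient and Hessian are unchanged, but the quartic curvature typically displaces the actual maximum of $u-\tilde\phi_\varepsilon$ to a nearby point $P_\varepsilon$ at which either $D\tilde\phi_\varepsilon(P_\varepsilon)\neq 0$ (apply (a) and use upper semi-continuity of $p\mapsto (D^2\phi\,p\cdot p)/|p|^2$ as $p\to 0$) or $D\tilde\phi_\varepsilon(P_\varepsilon)=0$ with vanishing effective Hessian (apply (b)). Passing to the limit $\varepsilon\to 0$ recovers $\phi_t(P_0)\le k\,\phi(P_0)\,\Lambda(A)$, contradicting the assumption.

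The main technical obstacle is that when $\Lambda(A)>0$ the quartic correction alone does not dominate the local quadratic profile of $\phi$ along the maximal eigendirection of $A$, so the touching-from-above property is not automatically preserved for all relevant test functions one might hope to use. In that regime the perturbation must be supplemented by a small directional term of the form $\delta\,v\cdot(x-x_0)$, with $v$ the maximal eigenvector of $A$, which breaks the symmetry and shifts the touching point into a region where $D\tilde\phi_\varepsilon\neq 0$ and (a) is available. Balancing the three scales $\varepsilon$, $\delta$, $M$ so that the resulting inequality passes to the limit and reproduces $\phi_t(P_0)\le k\,\phi(P_0)\,\Lambda(A)$ is the delicate part of the argument.
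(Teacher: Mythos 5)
Your proposal takes a genuinely different route from the paper's (which uses the classical doubling-of-variables device with the coupling term $\tfrac{j}{4}|x-y|^4+\tfrac{j}{2}|t-s|^2$), but it contains an error in the setup and a substantial unresolved gap. The error: the quartic perturbation $\tilde\phi_\eps=\phi+\tfrac{1}{4\eps}|x-x_0|^4+M(t-t_0)^2$ does \emph{not} displace the maximum of $u-\tilde\phi_\eps$. The added penalty is nonnegative and vanishes only at $P_0$, so $u-\tilde\phi_\eps\le u-\phi\le 0$ with equality still exactly at $P_0$, where $D\tilde\phi_\eps(P_0)=0$ and $D^2\tilde\phi_\eps(P_0)=A\neq 0$, i.e.\ precisely the unperturbed, uncovered configuration. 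The quartic alone yields nothing new, and your subsequent discussion of "the actual maximum moving to $P_\eps$" rests on this false premise.

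The gap: the linear term $\delta\,v\cdot(x-x_0)$ does move the touching point to some nearby $P_\delta=(x_\delta,t_\delta)$, but the gradient of the test function there is $D\phi(P_\delta)+\tfrac{1}{\eps}|x_\delta-x_0|^2(x_\delta-x_0)+\delta v$, and nothing you have said prevents this from vanishing; if it does, the Hessian at $P_\delta$ is close to $A\neq 0$ for small $\eps,\delta$, and you are again outside both hypotheses (a) and (b). Your proposal explicitly defers this to an unworked "delicate balancing of $\eps,\delta,M$," but that balancing \emph{is} the entire content of the lemma. The doubling-of-variables structure avoids the problem cleanly: the $x$-gradient of the coupling term is $j|x-y|^2(x-y)$, which vanishes exactly when $x=y$. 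In that case the derivatives of the penalty $\zeta_j(x,t)=\tfrac{j}{4}|x-y_j|^4+\tfrac{j}{2}|t-s_j|^2$ vanish completely at $(x_j,t_j)$, so condition (b) applies and, combined with the local minimum of $\phi-\theta_j$ at $(y_j,s_j)$, gives a contradiction ($j(t_j-s_j)\le 0$ versus $j(t_j-s_j)>0$). When $x_j\neq y_j$, condition (a) applies on the $u$ side and closes against the inequality from the $\phi$ side since $D^2\zeta_j(x_j,t_j)=-D^2\theta_j(y_j,s_j)$. Your single-function perturbation has no analogous dichotomy, and without one the argument does not close.
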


\begin{proof}
Step 1. Assume $u$ is not a viscosity subsolution of \eqref{E:mu} but satisfies the condition of
the lemma. Then there exist $P_0 = (x_0,t_0) \in Q$ and $\phi\in C^{2,1}(Q)$ such that $u-\phi$
has an absolute maximum at $P_0$, $u(P_0) = \phi(P_0)$, $D\phi(P_0) = 0$, $D^2\phi(P_0) \neq 0$,
and at $P_0$
\[
	\phi_t > k\, \phi\, \Lambda(D^2\phi).
\]
Let us define
\[
	w_j(x,t,y,s) \bydef u(x,t) - \phi(y,s) - \frac{j}{4}|x-y|^4 - \frac{j}{2}|t-s|^2
\]
and let $(x_j,t_j,y_j,s_j)$ be a point of maximum for $w_j$ in $\overline{Q}\times\overline{Q}$. It is
easy to see that $(x_j,t_j,y_j,s_j) \to (x_0,t_0,x_0,t_0)$ as $j\to\infty$.

Step 2. Let us check that for $j$ large enough we can not have $x_j = y_j$. Assume $x_j = y_j$ and let
us define a new function
\[
	\theta_j(y,s) \bydef -\frac{j}{4}|x_j-y|^4-\frac{j}{2}|t_j-s|^2 
\]
Then $\phi - \theta_j$ has a local minimum at the point $(y_j,s_j)$ and hence, at this point
$\phi_t = \theta_{j,t}$ and $D^2\phi \geq D^2\theta_{j} = 0$. From our assumption, for $j$ large
enough we have
\[
	j(t_j-s_j) = \theta_{j,t}(y_j,s_j)= \phi_t(y_j,s_j) > k\,\phi(y_j,s_j)\Lambda(D^2\phi(y_j,s_j)) \geq 0.
\]
Similarly, with
\[
	\zeta_j(x,t) \bydef \frac{j}{4}|x-y_j|^4 + \frac{j}{2}|t-s_j|^2 
\]
the function $u-\zeta_j$ has a maximum at $(x_j,t_j)$ and both $D\zeta_j$ and $D^2\zeta_j$ vanish
at $(x_j, t_j)$, because $x_j=y_j$. Therefore, $\zeta_j + C$ satisfies the condition of the lemma
and we conclude that
\[
	j(t_j-s_j) = \zeta_{j,t}(x_j,t_j) \leq 0,
\]
a contradiction.

Step 3. We have concluded that $x_j\neq y_j$ for large $j$. Let us now check that this also leads to a
contradiction.
By our assumption, and using the continuity of $P \mapsto \Lambda(D^2\phi(P))$, there exists
$\eps > 0$ such that in a neighbourhood of $(x_0,t_0)$
\[
	\eps < \phi_t - k\,\phi\,\Lambda(D^2\phi) - |D\phi|^2.
\]
Therefore, for large $j$, using the fact that $\phi-\theta_j$ has a minimum at $(y_j,s_j)$,
\[\begin{aligned}
	\eps &< \left( \phi_t - k\,\phi\,\lapi\phi - |D\phi|^2\right)(y_j,s_j) \\
	&\leq j(t_j-s_j) - k\,\frac{\phi(y_j,s_j)}{|x_j-y_j|^2}\langle D^2\theta_j(y_j,s_j)
		\cdot(x_j-y_j), (x_j-y_j) \rangle \\
	&\qquad- j^2|x_j-y_j|^6 .
\end{aligned}\]
On the other hand, we can still apply the condition of the lemma to $\zeta_j$, but now
$D\zeta_j \neq 0$. Using this and the fact that $u-\zeta_j$ has a maximum at $(x_j,t_j)$ we have
\[
	j(t_j-s_j) - j^2|x_j-y_j|^6 \leq k\frac{u(x_j,t_j)}{|x_j-y_j|^2}\langle D^2\zeta_j(x_j,t_j)
		\cdot(x_j-y_j),(x_j-y_j) \rangle.
\]
Since $D^2\zeta_j(x_j,t_j) = - D^2\theta_j(y_j,s_j)$, this and the above equation lead to a
contradiction for large $j$.
\end{proof}

\begin{defn}\label{D:cfbs}
Given $u\in C^+(\overline{Q})$, we say $u$ is a \textbf{nonnegative classical free-boundary solution} of
\eqref{E:mu} if and only if:
\begin{itemize}
\item[(i)] on the positivity set $\mathcal{P}(u) = \{ P\in Q \mid u(P) > 0\}$, the function $u$ is
smooth and solves \eqref{E:mu} in the classical sense;
\item[(ii)] the boundary of the positivity set $\Gamma = \del\mathcal{P}(u)\cap Q$ is a smooth
hypersurface and $u\in C^{2,1}(\mathcal{P}(u)\cap\Gamma)$;
\item[(iii)] on the hypersurface $\Gamma$ we have
\begin{equation}\label{E:bdry_speed}
	\sigma_n = |Du|,
\end{equation}
where $\sigma_n$ denotes the normal speed of boundary $\Gamma$.
\end{itemize}
\end{defn}

If instead of condition (\textit{i}) above we have
\[
	u_t \leq k\,u\,\lapi u + |Du|^2,
\]
in the positivity set and instead of \eqref{E:bdry_speed} we have
\[
	\sigma_n \leq |Du|,
\]
then we say that $u$ is a \textit{classical free-boundary \textbf{subsolution}}.
If in Definition~\ref{D:cfbs} we impose the extra condition
\begin{itemize}
\item[(\textit{iv})] $|Du| \neq 0$ on $\Gamma$,
\end{itemize}
then we say that $u$ is a \textit{classical \textbf{moving} free-boundary solution}. Replacing
all the inequalities with $\leq$ by inequalities with $\geq$ we define \textit{classical
(moving) free-boundary \textbf{supersolution}}.

Given two functions $u,v\in C^+(D)$, we say that $u$ is \textbf{strictly separated} (from above)
from $v$ if $u$ is compactly supported and $u(x) < v(x)$ for every $x\in supp(u)$, in this case
we write $u\prec v$.

\begin{defn}\label{D:visc_supsol}
Given $u\in C^+(\overline{Q})$, we say $u$ is a \textbf{nonnegative viscosity supersolution} of \eqref{E:mu}
if and only if the following conditions are satisfied:
\begin{itemize}
\item[(i)] For every $P_0\in Q$ where $u(P_0) > 0$ and every function $\phi\in C^{2,1}(Q)$
which touches $u$ from below at $P_0$ we have at the point $P_0$
\[\begin{aligned}
	& \phi_t \geq k\,\phi\,\lapi\phi + |D\phi|^2 \qquad&\text{if }D\phi\neq 0,\\
	& \phi_t \geq k\,\phi\, \lambda(D^2\phi) \qquad&\text{if }D\phi = 0.
\end{aligned}\]
\item[(ii)] If $w$ is a classical moving free-boundary subsolution of \eqref{E:mu} which
is strictly separated from $u$ at time $t_1$, $w(\cdot,t_1)\prec u(\cdot,t_1)$ and which satisfies $w < u$ on
$\Gamma_{t_1,t_2} = \{ (x,t) \in \del_p Q \mid t_1 \leq t \leq t_2\}$, the portion of lateral boundary of $Q$
for times between $t_1$ and $t_2$  (when the space domain is not the whole $\Rd$), then $w$ can not cross $u$
for times in $[t_1,t_2]$, \textit{i.e.} $w(\cdot,t) \leq u(\cdot,t)$ for every $t\in[t_1,t_2]$.
\end{itemize}
\end{defn}
See \cite{BV} for a motivation of these definitions. The main idea of comparing with free-boundary
solutions goes back to \cite{CV}. Finally we can define viscosity solutions.

\begin{defn}
A function in $C^+(\overline{Q})$ is a nonnegative \textbf{viscosity solution} of \eqref{E:mu} if it is simultaneously
a viscosity subsolution and a viscosity supersolution.
\end{defn}

\subsection*{Other ways to define viscosity solutions}

It is possible to define viscosity solutions directly for equation \eqref{E:ipme}
\begin{equation}\label{E:ipme_bis}
	\rho_t = \lapi \rho^m
\end{equation}
or, taking $w = \rho^m$ and $\beta(z) = z^{\oo{m}}$, for the alternate equation
\begin{equation}\label{E:ipme_alt}
	\beta(w)_t = \lapi w.
\end{equation}
This last formulation, in particular, has exactly the same kind of degeneracy as the equation \eqref{E:mu} we
use to define viscosity solutions and obtain existence. However, the ``Barenblatt'' solutions for these equations
would not have a nonzero normal boundary derivative, and hence they seem to be less natural regarding the free
boundary propagation. It is interesting to note, however that using the transformations $u = \frac{m}{m-1}\rho^{m-1}$ and $w=
\rho^m$ on test functions, we can go back and forth from the definition of viscosity solution of one formulation
to the other. In fact, without having defined it, we will use this ``jumping'' between formulations in the proof
of Theorem~\ref{T:exist_eevp}.

\section{Strictly positive solutions on bounded domains}\label{S:pos_solutions}

To obtain maximal viscosity solution for \eqref{E:mu}, we have to approximate the solution from
above by positive solutions. Hence, we need to establish comparison and obtain estimates for positive
solutions. In what follows, $Q$ denotes a cylinder of the form $\Omega\times(0,T)$, where $\Omega$ is an open
and bounded domain in $\Rd$, and $\Gamma$ its parabolic boundary,
$\Gamma = (\Omega\times\{0\})\cup (\del \Omega\times[0,T])$.

If a function $u$ is a strictly positive classical solution of \eqref{E:mu} in $Q$, in the sense that there
exists $c$ such that
\[
	u(x) \geq c > 0 \quad \text{for every $x\in Q$},
\]
then $u$ is also a viscosity solution
(with obvious adaptations in the above definitions) of the equation
\begin{equation}\label{E:mund}
	u_t = k\,\beta_c(u) \lapi u + |Du|^2.
\end{equation}
where $\beta_c$ is as defined in the notations at the end of the introduction. Therefore, it is enough to establish the results for such functions.

\subsection{Comparison} We prove two comparison results. The proof of the first comparison result is typical and
its idea is similar to the proof of Lemma~\ref{L:weak_sol_cond}, however it is weaker than the second. We sketch
it here for convenience.
\begin{theorem}\label{T:comparison_pos}
Assume $u$ and $v$ are viscosity sub- and supersolutions of \eqref{E:mund}, respectively, in
$Q$ and satisfy
$u,v\geq c$,
\[
	 u(P) < v(P)
\]
for every $P \in \Gamma$. Then $u \leq v$ in $Q$. If $v$ is a strict supersolution or $u$ is a strict
subsolution, then this inequality is strict.
\end{theorem}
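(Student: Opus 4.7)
The plan is to argue by contradiction using the doubling-of-variables technique, modeled on the proof of Lemma~\ref{L:weak_sol_cond}. Assume $M := \sup_{\overline Q}(u - v) > 0$. Since $u < v$ on the parabolic boundary $\Gamma$ and both functions are continuous, compactness yields $u \le v - 2\delta$ on $\Gamma$ for some $\delta > 0$, so $M$ is attained only at interior points of $Q$. Introduce the quartic penalty
\[
w_j(x,t,y,s) \bydef u(x,t) - v(y,s) - \frac{j}{4}|x-y|^4 - \frac{j}{2}|t-s|^2,
\]
and let $(x_j,t_j,y_j,s_j)$ be a maximizer on $\overline Q \times \overline Q$. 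Standard estimates give $j|x_j - y_j|^4 \to 0$, $j|t_j - s_j|^2 \to 0$, and convergence to a diagonal interior point $(\hat x, \hat t)$ where $u(\hat x,\hat t) - v(\hat x,\hat t) = M > 0$.

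One then performs the same case analysis as in the proof of Lemma~\ref{L:weak_sol_cond}. In the case $x_j = y_j$, the test functions
\[
\zeta_j(x,t) = \frac{j}{4}|x - y_j|^4 + \frac{j}{2}|t - s_j|^2, \qquad \theta_j(y,s) = -\frac{j}{4}|x_j - y|^4 - \frac{j}{2}|t_j - s|^2
\]
have vanishing gradient and Hessian at the doubling points, so the weakened singular branch of Lemma~\ref{L:weak_sol_cond} (and its analog for supersolutions) collapses the viscosity inequalities to $j(t_j - s_j) \le 0$ and $j(t_j - s_j) \ge 0$, and a contradiction is extracted exactly as in Step 2 of the cited lemma. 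In the case $x_j \ne y_j$, the common gradient $p = j|x_j-y_j|^2(x_j-y_j)$ is nonzero; the parabolic Crandall--Ishii lemma produces symmetric matrices $X \le Y$ which plug into the nonsingular branch of \eqref{E:mund}, and using $u,v \ge c$ together with the definite gap $u(\hat x, \hat t) > v(\hat x, \hat t)$ one subtracts the two viscosity inequalities in an attempt to derive the desired contradiction.

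The principal obstacle is that the coefficient $ku$ in front of $\lapi u$ depends on the solution itself, so the equation is not proper in the Crandall--Ishii sense and the matrix inequality $X \le Y$ alone does not contradict $u\,\langle Xp,p\rangle \ge v\,\langle Yp,p\rangle$ when $u > v$: for the quartic penalty one computes $\langle Xp,p\rangle \le 3j|x_j-y_j|^2|p|^2$ and $\langle Yp,p\rangle \ge -3j|x_j-y_j|^2|p|^2$, and these slack terms have opposite signs. This is where the strict sub- or super-solution hypothesis enters. I would therefore first prove the strict-comparison statement (when $u$ is a strict subsolution or $v$ a strict supersolution with margin $\eta > 0$), in which case the quantitative gap $\eta$ absorbs the residual second-order contribution in both cases of the dichotomy. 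To recover the non-strict comparison as stated, I would approximate $v$ from above by a family of strict supersolutions $v_\varepsilon = v + \varepsilon\psi$ with $\psi > 0$ chosen so that $v_\varepsilon \ge c$, so that the strict boundary inequality $u < v_\varepsilon$ on $\Gamma$ is preserved, and so that $v_\varepsilon$ satisfies \eqref{E:mund} with a strict sign (for instance $\psi = e^{\lambda t}$ with $\lambda$ large, exploiting the bound $\beta_c(v_\varepsilon) = v_\varepsilon$ for $v \ge c$); applying strict comparison to $(u, v_\varepsilon)$ and sending $\varepsilon \to 0$ finishes the proof, while the strict assertion of the theorem follows from the same analysis without any limit passage.
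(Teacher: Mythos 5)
Your high-level plan (reduce to a strict supersolution, then approximate) is the same as the paper's, and your diagnosis of the obstruction---the equation is not proper because of the quasilinear coefficient $k\beta_c(u)$ in front of $\lapi u$---is exactly right. But the specific perturbation $v_\eps = v + \eps e^{\lambda t}$ does \emph{not} yield a strict supersolution of \eqref{E:mund}, and this is the step on which everything hinges. If $\phi$ touches $v_\eps$ from below at a point where both values exceed $c$, then $\tilde\phi = \phi - \eps e^{\lambda t}$ touches $v$ from below with the same spatial derivatives, and the supersolution inequality for $v$ becomes
\[
\phi_t \;\geq\; k\,\phi\,\lapi\phi + |D\phi|^2 + \eps e^{\lambda t}\bigl(\lambda - k\,\lapi\phi\bigr),
\]
and the residual $\eps e^{\lambda t}(\lambda - k\lapi\phi)$ has indefinite sign: $\lapi\phi$ is not bounded across admissible test functions, so no choice of $\lambda$ produces a uniform strict margin. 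The paper's perturbation $\bar v(x,t) = e^{\gamma t}\,v(x,h(t))$ with $h(t) = (e^{\gamma t}-1)/\gamma$ circumvents this by exploiting the quadratic homogeneity of the operator $u\mapsto k\,u\,\lapi u + |Du|^2$: one computes $\bar v_t - k\bar v\lapi\bar v - |D\bar v|^2 \geq \gamma\,\bar v \geq \gamma c > 0$, a margin that is uniform because the perturbation is a genuine scaling deformation rather than an additive shift.

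You also omit a second reduction that the paper makes and that directly addresses the very coefficient mismatch you flag. After replacing $v$ by $\bar v$ (and restricting to the shorter interval on which $\bar v > u$ still holds on the boundary), the paper shrinks the time horizon further so that $\sup_Q(u - \bar v) = 0$; then the interior maximum $P_0$ is a touching point with $u(P_0) = \bar v(P_0)$, so the two quasilinear coefficients converge to the same value along the doubled sequence. Without this normalization, the inequality you obtain after subtracting the viscosity conditions still carries $u(x_j,t_j)$ and $v(y_j,s_j)$ with a nontrivial gap, and the strict margin does not by itself close the argument. In short: your diagnosis is accurate, but the proposed cure repairs neither half of the difficulty, whereas the paper's combination of scaling-in-time plus touching-point normalization is precisely what makes the doubling argument of Lemma~\ref{L:weak_sol_cond} portable to this comparison theorem.
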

\begin{proof}
Step 1. Let us assume the statement of the theorem is false,
\[
	\sup_{P\in Q} (u(P) - v(P)) > 0.
\]
If we consider instead of $v$ the function $\bar{v}(x,t) = e^{\gamma t}v(x,h(t))$, where $h(t) =
\frac{e^{\gamma t} -1}{\gamma}$, with $\gamma$ sufficiently small we must still have $\bar{v} > u$ on
$\del_p (\Omega\times[0,T-\eps(\gamma)))$ (where $\eps(\gamma)$ is a small number depending on $\gamma$) and
now $\bar{v}$ is a strict supersolution. To see this, assuming $v$ is smooth, we compute
\[\begin{aligned}
	\bar{v}_t &= \gamma \bar{v} + e^{2\gamma t} v_t(x,\tau)
	\geq \gamma\bar{v} + k \bar{v} \lapi \bar{v} + |D\bar{v}|^2.
\end{aligned}\]
This computation can be carried for the test functions in the definition of viscosity solution, whence
our claim follows. Therefore we assume $v$ is a strict supersolution.
By considering a shorter time interval we can also assume
\[
	\sup_{P\in Q} (u(P) - v(P)) = 0.
\]
This supremum must occur at an interior point, $P_0 = (x_0,t_0) \in Q$.

As in the proof of Lemma~\ref{L:weak_sol_cond} we take
\[
	w_j(x,t,y,s) \bydef u(x,t) - v(y,s) - \frac{j}{4}|x-y|^4 - \frac{j}{2}(t-s)^2
\]
and let $\mathbf{P}_j = (x_j,t_j,y_j,s_j)$ be a point of maximum of $w_j$ in $\overline Q \times
\overline Q$. Once again, for $j$ large enough, $x_j,y_j\in \Omega$, $t_j,s_j \in (0,T)$ and
$\mathbf{P}_j \to (x_0,t_0,x_0,t_0)$ as $j\to\infty$.

Step 2. If $x_j = y_j$, then, with $\theta_j$ as in Lemma~\ref{L:weak_sol_cond}, $(v - \theta_j)(y,s)$ has
a local minimum at the point $(y_j,s_j)$, and since $v$ is a strict supersolution we get $\delta
< j(t_j - s_j)$. Similarly, $u - \zeta_j$ has a local maximum at $(x_j,t_j)$ from where we get the
contradiction $0 \geq j(t_j-s_j)$.

Step 3. Since $u(P_0) = v(P_0)$, the case $x_j \neq y_j$ can be treated exactly as in the proof of
Lemma~\ref{L:weak_sol_cond}.

If either $u$ is a strict subsolution or $v$ is a strict supersolution, then we can obtain the
contradiction without rescaling the function, and hence, without taking the limit of the rescaled solution
and loosing the strict inequality.
\end{proof}

\begin{corl} If $u$ is a subsolution and $u \leq M$ on $\Gamma$, then $u\leq M$ in $Q$. Similarly, if
$u$ is a supersolution and $u \geq c > 0$ on $\Gamma$, then $u \geq c$ in $Q$.
\end{corl}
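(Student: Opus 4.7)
My plan is to reduce both statements to direct applications of Theorem~\ref{T:comparison_pos}, using constants as comparison barriers. Any constant $a$ defines a smooth classical solution of \eqref{E:mund}: all time and spatial derivatives vanish, so the equation collapses to $0=0$. Hence a constant is simultaneously a viscosity sub- and supersolution and can be placed freely on either side of a comparison.

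For the upper bound, I would fix $\eps > 0$, take $v_\eps \equiv M + \eps$ as the supersolution, and invoke Theorem~\ref{T:comparison_pos}. The $+\eps$ buffer provides the strict boundary inequality $u \leq M < M + \eps = v_\eps$ on $\Gamma$, and the lower bound $v_\eps \geq c$ required by the theorem holds since necessarily $M \geq c$ (otherwise $u \geq c$ and $u \leq M$ on $\Gamma$ would be inconsistent). The theorem yields $u \leq M + \eps$ throughout $Q$, and sending $\eps \to 0^+$ gives $u \leq M$.

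For the lower bound, I would dually take the subsolution $w_\eps \equiv c - \eps$ for small $\eps > 0$, which gives the strict boundary inequality $w_\eps < c \leq u$ on $\Gamma$. This is where I expect the main (mild) obstacle, because the condition $w_\eps < c$ falls outside the hypothesis ``$u, v \geq c$'' of Theorem~\ref{T:comparison_pos}. The cleanest way around it is to observe that the constant $c$ enters the proof of Theorem~\ref{T:comparison_pos} only through $\beta_c$, whose sole role is to keep the equation uniformly away from the singular locus: the same argument works verbatim with $\beta_c$ replaced by $\beta_{c/2}$. A viscosity supersolution of \eqref{E:mund} that is bounded below by $c$ is automatically a viscosity supersolution of the version of \eqref{E:mund} built with $\beta_{c/2}$, and for $\eps < c/2$ the constant $w_\eps$ satisfies the lower bound $c/2$ as well. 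Comparison in this enlarged setting then yields $u \geq c - \eps$ in $Q$, and letting $\eps \to 0^+$ completes the proof. The asymmetry between the two halves is a direct symptom of the singular character of \eqref{E:mu} at $u = 0$, which is precisely what motivated the introduction of $\beta_c$ in the first place.
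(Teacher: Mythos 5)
Your proof is correct and follows essentially the same route as the paper: the paper's entire proof is the one-line observation that $M+\eps$ and $c-\eps$ are positive (constant) solutions of the equation for small $\eps$, to which the comparison theorem applies, and then $\eps\to 0$. Your extra discussion of replacing $\beta_c$ by $\beta_{c/2}$ for the lower-bound half is a valid and careful way to address the hypothesis $u,v\geq c$ of Theorem~\ref{T:comparison_pos}, a point the paper passes over silently; the key observation you rely on — that a supersolution bounded below by $c$ is automatically a supersolution of the $\beta_{c'}$-version for $c'<c$, since the test function agrees with $u\geq c>c'$ at the touching point — is correct.
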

\begin{proof}
These facts follow from the theorem noting that $M + \eps$ and $c - \eps$ are positive solutions of the
equation for sufficiently small $\eps$.
\end{proof}

The comparison theorem \ref{T:comparison_pos} is too weak to prove uniqueness because we assume a strict relation
$v>u$ on the boundary. However, the transformation we used in its proof to obtain a strict supersolution $\bar{v}$
above the subsolution $u$ will work with an extra Lipschitz condition on the data.

\begin{theorem}\label{T:comparison_pos_strong}
Assume $u$ and $v$ are viscosity sub- and supersolutions of \eqref{E:mund} in $Q$, respectively, they satisfy
$u,v\geq c > 0$, and
\[
	 u(P) \leq v(P)
\]
for every $P \in \Gamma$. Assume also that either $v_t$ is bounded below or $u_t$ is bounded above on the lateral
boundary of $Q$. Then $u \leq v$ in $Q$.
\end{theorem}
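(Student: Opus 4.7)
The plan is to reduce the statement to Theorem~\ref{T:comparison_pos} by constructing a strict supersolution $\bar v$ that lies above $u$ on the whole parabolic boundary (and strictly so on its lateral part) of a short-time subcylinder, with the short-time restriction then lifted by iteration. Specifically, I would reuse the exponential rescaling from the proof of Theorem~\ref{T:comparison_pos},
\[
\bar v(x,t) \bydef e^{\gamma t}\, v(x, h(t)), \qquad h(t)=\frac{e^{\gamma t}-1}{\gamma},
\]
for small $\gamma > 0$. As shown there, $\bar v$ is a strict viscosity supersolution of \eqref{E:mund} on $\Omega\times[0,T-\eps(\gamma)]$, with excess at least $\gamma\bar v\geq\gamma c$. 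The cost of this rescaling is that $v$ is evaluated at the shifted time $h(t)>t$, which on the lateral boundary would spoil the ordering $\bar v\geq v$ if $v$ decreased too fast in time.

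This is exactly where the one-sided Lipschitz hypothesis enters. Under $v_t \geq -L$ on the lateral boundary one has $v(x,h(t))\geq v(x,t)-L(h(t)-t)$ there, so using $u\leq v$ on $\Gamma$ and $v\geq c$,
\[
\bar v(x,t)-u(x,t)\geq e^{\gamma t}v(x,h(t))-v(x,t)\geq (e^{\gamma t}-1)c - e^{\gamma t}L(h(t)-t) = \gamma t\Bigl(c-\frac{Lt}{2}\Bigr)+O(\gamma^2),
\]
which is strictly positive for $0<t\leq T_1:=\min\{T-\eps(\gamma),\,c/L\}$ and $\gamma$ small. Together with $\bar v(\cdot,0)=v(\cdot,0)\geq u(\cdot,0)$, this yields $u\leq\bar v$ on the parabolic boundary of $Q_1:=\Omega\times[0,T_1]$, with equality possibly only on $\Omega\times\{0\}$.

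I would then run the doubling-of-variables argument of the proof of Theorem~\ref{T:comparison_pos} on $Q_1$: if $\sup_{\ol{Q}_1}(u-\bar v)>0$, the supremum must be attained at an interior point (since $u-\bar v\leq 0$ on $\del_p Q_1$), and the strict supersolution excess $\gamma c>0$ produces the same contradictory chain of viscosity inequalities as in that proof. Hence $u\leq\bar v$ on $Q_1$, and letting $\gamma\to 0$ gives $u\leq v$ on $\Omega\times[0,T_1^*]$ for some $T_1^*>0$ independent of $\gamma$. A finite iteration on the slabs $[kT_1^*,(k+1)T_1^*]$, with the conclusion of each step serving as initial data for the next, extends $u\leq v$ to the whole interval $[0,T]$. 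The alternative hypothesis ``$u_t$ bounded above'' is handled symmetrically by rescaling $u$ to a strict subsolution dominated by $v$ on the lateral portion of $\Gamma$.

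The main obstacle I anticipate is the balance in the boundary-separation step: the rescaling produces an $O(\gamma t)$ positive gap, while the time-shift introduces an $O(L\gamma t^2)$ Lipschitz slippage, so strict separation on the lateral boundary is available only over a fixed time length $\sim c/L$, independent of $\gamma$; this is precisely why the iteration is essential to cover the whole cylinder $[0,T]$.
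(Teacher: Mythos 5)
Your argument is correct and mirrors the paper's own proof: rescale $v$ by an exponential time change to a strict supersolution, use the one-sided Lipschitz bound on the lateral boundary to get strict separation on a short time slab of length $O(c/L)$, run the doubling-of-variables comparison, let $\gamma\to 0$, and iterate in time. The paper packages the lateral-boundary estimate via the intermediate condition $v_t>-2v/t$ and uses the slightly modified time change $h(t)=(e^{\gamma t+\gamma^3}-1)/\gamma$ with multiplier $h'(t)$ to force strict separation already at $t=0$, but your direct gap estimate $\bar v-u\geq \gamma t(c-Lt/2)+O(\gamma^2)$ encodes exactly the same $t<2c/L$ threshold and is, if anything, more transparent.
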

\begin{proof}
Step 1. The proof is the same as for Theorem~\ref{T:comparison_pos} once we check that for $\gamma$ sufficiently
small the function
\[
	\bar{v}(x,t) = h'(t) v\left(x,h(t)\right), \;\text{with}\;\, h(t) = \frac{e^{\gamma t+\gamma^3} - 1}{\gamma},
\]
stays strictly above $u$ on the parabolic boundary of $Q$. This is clear on the bottom portion; we claim this is
also the case on the lateral portion of the boundary. We give the proof under the assumption that $v_t$ is bounded below.
The argument when $u_t$ is bounded above is completely symmetric.

Step 2. Let us assume for the moment that the following condition holds,
\begin{equation}\label{E:bdry_grow_cond}
	v_t > - \frac{2v}{t}.
\end{equation}
Let us fix $x\in\del \Omega$ and write $g(t) = v(x,t)$. Suppose that for some $t>0$
\[
	\bar{v}(x,t) = h'(t)g(h(t)) \leq g(t) = v(x,t).
\]
Noting that
\[
	h'(t) - 1 = e^{\gamma t + \gamma^3} - 1 = \gamma t + O(\gamma^2)
\]
and
\[
	h(t) - t = \frac{\gamma t^2}{2} + O(\gamma^2),
\]
we must have
\[\begin{aligned}
	g'(t) &= \frac{g(h(t)) - g(t)}{h(t) - t} + o(1) \leq -\frac{h'(t) - 1}{h(t) - t}g(t) + o(1)
	= - \frac{2g(t)}{t} + o(1).
\end{aligned}\]
This is in contradiction with \eqref{E:bdry_grow_cond} for $\gamma$ sufficiently small, therefore we must have
$\bar{v}(x,t) > v(x,t)$ as claimed.

Step 3. Finally, we remove the condition \eqref{E:bdry_grow_cond}. Since we are assuming $v_t$ bounded below, and $v\geq c >0$,
this condition must surely hold for small $t$, hence, $v \geq u$ on $\Omega\times[0,\bar{t}]$ for small $\bar{t}$. Since
the equation is invariant under translations in $t$, we can now reapply the result to the intervals of the form
$[k\bar{t},(k+1)\bar{t}]$, $k=1,2,3,\ldots$, successively,
considering $\tilde{u}(x,t) = u(x,t-k\bar{t})$ and $\tilde{v}(x,t) = v(x,t-k\bar{t})$.
\end{proof}

\section{Special solutions}\label{S:special_solutions}

We look briefly at concrete examples of solutions with a free boundary which are smooth on their set
of positivity. All the solutions obtained in this section are classical moving free-boundary solutions in the sense of Definition~\ref{D:cfbs}. These solutions are important as particular examples, and also as
models of asymptotic behavior of general classes of solutions as $t\to\infty$.

\subsection{Separation of variables}
Solutions of the form
\[
	\rho(x,t) = T(t) F(x)
\]
are usually of interest. Using the equation, we get the explicit formula for $T$
\begin{equation}\label{E:sep_T}
	T(t) = \left[ C + (m-1)\lambda t \right]^{-\oo{m-1}}
\end{equation}
and the equation for $F$
\begin{equation}\label{E:sep_F}
	\lapi F^m(x) + \lambda F(x) = 0.
\end{equation}

A special case of separation of variables is when $\rho$ is independent of $t$, which corresponds to
taking $\lambda = 0$. In this case the equation is nothing more than the infinity Laplace equation. Thus,
if $v$ is any nonnegative solution of $\lapi v = 0$ in $\Omega$, then $\rho(x,t) \bydef v^{\oo{m}}(x)$ is a
solution of \eqref{E:ipme} in $Q$.

Solving the nonlinear elliptic equation \eqref{E:sep_F} is nontrivial. This can nonetheless be done explicitly
when $F$ is radial (we show there is a solution in a non-radial domain in Theorem~\ref{T:exist_eevp}). Since
the solutions obtained this way are known to exist, we postpone their derivation to Appendix~A (although their
explicit form seems to be new). As observed in the Introduction, the radial solutions are essentially solutions
of the 1-$d$ (PME). In fact, all the explicit solutions we obtain are of this kind.

\subsection{Similarity solutions}

If we look for nonnegative solutions of \eqref{E:ipme} of the form
\[
	\rho(x,t) = t^{-\alpha} f(\eta),
\]
with $\eta = t^{-\beta}x$ and further assume that $f$ is radial and $\alpha = \beta = 1/(m+1)$ we obtain the
following solution:
\[
	f(r) = \left[ A - \frac{\beta (m-1)}{2m} r^2\right]_+^{\oo{m-1}}.
\]
By analogy with the porous medium equation, we call this type of solution the \textit{Barenblatt functions}. The
solution $\rho$ takes the form
\begin{equation}\label{E:Barenblatt_def}
	 \beta_{R}(x,t) \bydef \frac{\gamma_m}{t^\oo{m-1}} \left[ \left(Rt^\oo{m+1}\right)^2
		- |x|^2 \right]_+^\oo{m-1},
\end{equation}
where $\gamma_m = \left[ (m-1)/(2m(m+1)) \right]^\oo{m-1}$. Observe that $R$ denotes the radius of the
support of $\beta_R$ at time $t=1$. As observed above, these Barenblatt functions are closely related to the
Barenblatt functions in \cite[formula (4.2)]{BV}, more precisely, they are the $(m-1)$-th power of our functions
in dimension 1, which is consistent with the intuition that $\lapi$ is a one dimensional second derivative.

We will need these solutions for the modified problem, that is, we want to apply the pressure-to-density
transformation $u = (m/(m-1)) \rho^{m-1}$. Therefore, we get the \textbf{Barenblatt functions} for \eqref{E:mu}
\[
	\mathcal{B}_R(x,t) = \oo{(m+1) t} \left[ \left(Rt^\oo{m+1}\right)^2 - |x|^2 \right]_+.
\]
It will, of course, be convenient to consider translations of these functions, in particular in $t$, to
avoid a singularity at $t=0$: for $(x_0,t_0) \in \Rd\times\R^+_0$, define
\[
	\mathcal{B}_R(x,t;x_0,t_0) \bydef \mathcal{B}_R(x-x_0,t+t_0)
\]

\begin{rmk}
The choice $\alpha = \beta$ is the only one that makes $\int_\Rd [\rho(x,t)]^d dx$ constant in time. This
is in some sense the ``correct'' behaviour for the infinity-Laplacian, as the ``fundamental'' solution
of $u_t = \lapi u$, $\Gamma(x,t) = t^{-\oo{2}} \exp(-|x|^2/4t)$ satisfies this type of conservation.
\end{rmk}

\subsection{Traveling waves}

For functions which depend only on one space variable, the infinity-Laplacian is just a regular second
derivative. Therefore, if we take $\rho$ of the form $\rho(x,t) = f(\eta)$, $\eta = x_1-ct$, with $c>0$,
we have
\[
	(f^m){''} + cf{'} = 0.
\]
This is the exact same equation obtained, under similar assumptions, for the regular porous medium equation
(PME), $u_t = \lap (u^m)$, so we get the same solutions, namely,
\[
	\rho(x,t) = c[a+ct-x_1]_+.
\]
The traveling wave solution for \eqref{E:mu} is therefore given by
\[
	u(x,t) = \frac{m}{m-1}c[a+ct-x_1]_+^{m-1}.
\]

For more general planar waves of the form $\rho(x,t) = A(t) U(x_1-s(t))$, the equation for $A$ and $U$ is
again the same as for the (PME), and we still have the same solutions.

It is easy to see that both the Barenblatt functions and the traveling waves satisfy the definition of
classical free boundary solutions.

\section{Maximal viscosity solutions in a  bounded domain}\label{S:maximal_solutions}

In this section we prove existence of the maximal viscosity solution of \eqref{E:mu} for nonnegative data as stated in Theorem~\ref{T:exist_maximal}. The idea is to approximate the data with a strictly
positive function.

\begin{proof}[Proof of Theorem~\ref{T:exist_maximal}]
Step 1. For $n\in\mathbb{N}$ let $g^n = g + \oo{n}$ and let $u^n$ be the unique viscosity solution of
\eqref{E:mu}, satisfying $u^n = g^n$ on $\Gamma$, given by Theorem~\ref{T:exist_pos}. From the
comparison results, we have that $0 < u^n \leq u^l$ in $Q$ for $l < n$. Hence, $(u^n)_{n\in\mathbb{N}}$
has a limit as $n \to\infty$, $\bar{u}$. From the estimate of Theorem~\ref{T:exist_pos} we immediately see that $\bar{u}$
is continuous---if $g$ is Lipschitz, we see that $\bar{u}$ is also Lipschitz continuous.
Note also that because $\bar{u}$ is continuous and $(u^n)_{n}$ is decreasing, the convergence is locally uniform.

Step 2. Next we check that $\bar{u}$ is a viscosity subsolution of \eqref{E:mu}. The argument goes exactly as in
\cite[Proposition~5.1]{BV} for the nonsingular part of the inequality in Definition~\ref{D:visc_subsol}. Take a
smooth function $\phi$ touching $\bar{u}$ from above at a point $P_0 = (x_0,t_0)$ and assume first that $D\phi(P_0)
\neq 0$. Since $u^n$ converges locally uniformly to $\bar{u}$, in some neighborhood of $P_0$, $Q_r = B_r(x_0)\times
(t_0 - r^2, t_0]$, for large $n$ the function $\phi^n = \phi + c_n$ touches $u^n$ from above at a
point $P_n \to P_0$ as $n\to\infty$. Note that $c_n$ is chosen so that the maximum of $u^n-\phi^n$
(at $P_n$) is exactly 0 and for $n$ sufficiently large $D\phi^n \neq 0$ in $Q_r$. It is clear that
$c_n\to 0$ as $n\to\infty$, and since $u^n$ is a positive viscosity solution,
\[
	\phi^n_t \leq k \phi^n\lapi\phi^n + |D\phi^n|^2 \qquad\text{at $P_n$},
\]
letting $n\to \infty$ we obtain the required inequality for $\phi$,
\[
	\phi_t \leq k \phi \lapi \phi + |D\phi|^2 \qquad\text{at $P_0$}.\]
If $D\phi(P_0) = 0$, from Lemma~\ref{L:weak_sol_cond} we can also assume $D^2\phi(P_0) = 0$ and need only check that
$\phi_t \leq 0$. With a similarly constructed family $(\phi^n)_{n\in\mathbb{N}}$, we have that at the point $P_n$
either
\[\begin{aligned}
	&\phi^n_t \leq k\phi^n \lapi \phi^n + |D\phi^n|^2, &\;\;
		&\text{if $D\phi^n(P_n)\neq 0$, or} \\
	&\phi^n_t \leq k\phi^n\Lambda(D^2\phi^n), & \;\; &\text{if $D\phi^n(P_n) = 0$.}
\end{aligned}\]
In any case, letting $n\to \infty$, we get
\[
	\phi_t \leq 0,
\]
as required.

Step 3. To prove that $\bar{u}$ is a viscosity supersolution we need to check conditions $(i)$ and $(ii)$ of
Definition~\ref{D:visc_supsol}. Condition $(i)$ follows just as in step 2 of this proof. Let us check the second
condition. Assume $v$ is a classical moving free boundary subsolution of \eqref{E:mu} strictly separated from
$\bar{u}$ at $t = t_1$ and on the portion of the lateral boundary $\Gamma_{t_1,t_2}$.
From our construction, we have $\bar{u} \leq u^n$ in $Q$, in particular $v < u^n$ at
$t = t_1$ and on $\Gamma_{t_1,t_2}$. Since $v$ is classical at points where it is positive, we obtain from the
comparison result for positive solutions that $v$ can not touch $u^n$ from below. Passing to the limit we get that
$v$ can not cross $\bar{u}$.

Step 4. Finally, let us check that $\bar{u}$ is maximal. Suppose $w$ is another viscosity solution with the same data
as $\bar{u}$ which is strictly greater than $\bar{u}$ at some point. For $n$ sufficiently large, $w$ touches
$u^n$ from below for the first time at some point $P_n = (x_n,t_n)$ with $t_n >0$. Since $w = \bar{u} = g$ on
$\Gamma$ and $u^n = g + \oo{n}$ on this set, $P_n$ must be an interior point. Furthermore, since $w(P_n) =
u^n(P_n) > 0$, it is possible to find $Q_{n} \subset  \Omega\times[t_n-\delta_n,t_n]$ such that $P_n\in Q_n$, $w > 0$
in $Q_n$ and $w<u^n$ on $\del_p Q_n$. By the comparison result, we get a contradiction. To see that $Q_n$ must
exist with the desired properties, note that since $w=g= u^n - \oo{n}$ on $\Gamma$ and $w(P_n) = u^n(P_n) > 0$,
the open set $W_n = \{x\in \Omega \mid w(x,t_n) > \min w(P_n)/2\}$ contains the point $P_n$ and $w = w(P_n)/2$ on
$\del W_n$. By continuity, there exists $\delta_n >0$ such that $w > w(P_n)/4$ on $W_n\times [t_n-\delta_n,t_n]$.
This set satisfies the above requirements.
\end{proof}

Maximal solutions are ordered according to their data.
\begin{theorem}
If $g_1$ and $g_2$ are functions in $C(\Gamma)$ and $g_1 \leq g_2$, then the maximal solutions $u_1$ and $u_2$,
obtained by the above process with data $g_1$ and $g_2$, respectively, satisfy $u_1 \leq u_2$ in $Q$. In particular,
maximal solutions are unique.
\end{theorem}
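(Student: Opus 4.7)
The plan is to reduce the comparison of the two maximal solutions to a comparison between the strictly positive approximating solutions used in their construction, where the weak comparison result Theorem~\ref{T:comparison_pos} is directly applicable. Recall from the proof of Theorem~\ref{T:exist_maximal} that the maximal solution $u_i$ is obtained as the monotone decreasing, locally uniform limit $u_i = \lim_n u_i^n$, where $u_i^n$ is the strictly positive viscosity solution with boundary data $g_i + \tfrac{1}{n}$ provided by Theorem~\ref{T:exist_pos}.

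The key step is an index offset that converts the weak inequality $g_1 \leq g_2$ into a strict boundary inequality for a shifted pair of approximations. For every $n \geq 1$, since $g_1 \leq g_2$ on $\Gamma$,
\[
g_1 + \tfrac{1}{n+1} \;\leq\; g_2 + \tfrac{1}{n+1} \;<\; g_2 + \tfrac{1}{n} \qquad \text{on } \Gamma.
\]
Both $u_1^{n+1}$ and $u_2^n$ are strictly positive (bounded below by $\tfrac{1}{n+1}$ and $\tfrac{1}{n}$ respectively), so Theorem~\ref{T:comparison_pos} applies to the pair and yields $u_1^{n+1} \leq u_2^n$ throughout $Q$. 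Letting $n \to \infty$, the left side converges to $u_1$ and the right side to $u_2$, giving $u_1 \leq u_2$ in $Q$.

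For the uniqueness assertion, suppose $u$ and $\tilde u$ are both maximal viscosity solutions obtained by the construction from the same data $g$. Applying the monotonicity just proved with $g_1 = g_2 = g$ yields $u \leq \tilde u$, and by symmetry $\tilde u \leq u$, so $u = \tilde u$.

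The only delicate point is that Theorem~\ref{T:comparison_pos} requires \emph{strict} inequality on the parabolic boundary, which is not assumed for the data. This is precisely what the index-offset trick resolves, and it does so without requiring any additional regularity (such as Lipschitz continuity) on $g_1$ or $g_2$; I do not foresee any other obstacle in carrying out the argument.
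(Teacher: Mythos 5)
Your proof is correct, and it differs in one genuine respect from the paper's argument. The paper compares the approximations with the \emph{same} index, $u_1^n$ and $u_2^n$, whose boundary data satisfy only the non-strict inequality $g_1+\tfrac1n\leq g_2+\tfrac1n$, and therefore invokes the strong comparison result, Theorem~\ref{T:comparison_pos_strong}; you instead shift the index so that $g_1+\tfrac{1}{n+1}<g_2+\tfrac1n$ strictly on $\Gamma$, which lets you get by with the weak comparison result, Theorem~\ref{T:comparison_pos}, applied to \eqref{E:mund} with $c=\tfrac{1}{n+1}$, and then pass to the monotone limit. What your route buys is that it does not need the extra hypothesis of the strong comparison theorem (that $v_t$ be bounded below or $u_t$ bounded above on the lateral boundary), a condition which for merely continuous $g_1,g_2$ is not automatic for the approximations $u_i^n$, since on the lateral boundary they coincide with $g_i+\tfrac1n$; in that sense your argument is slightly more robust for general continuous data, while the paper's is a one-line reduction that implicitly relies on the regularity underlying the strong comparison. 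Your treatment of uniqueness is also fine (and could even be shortened: two maximal solutions with the same data dominate each other by the very definition of maximality).
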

\begin{proof}
The approximations $g_1^n$ and $g^n_2$ from the previous proof satisfy $g^n_1 \leq g^n_2$, therefore, from the
strong comparison result we have $u^n_1 \leq u^n_2$, and in the limit $u_1 \leq u_2$.
\end{proof}

It is worth noting that in Step 3 of the proof of existence of a maximal solution, we in fact prove that a maximal
solution which is strictly separated at some time from a solution below can not be crossed by this solution at
later times.
\begin{theorem}
Let $u$ be a maximal solution and $v$ another solution such that $v \prec u$ at $t=t_1$. Then $v \leq u$ for all $t>t_1$.
\end{theorem}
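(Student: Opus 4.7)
The statement is essentially what is already proved in Steps~3--4 of the proof of Theorem~\ref{T:exist_maximal}, and the plan is to adapt that argument directly. Let $(u^n)_n$ denote the decreasing family of strictly positive viscosity solutions with boundary data $g + 1/n$ constructed there, converging locally uniformly to $u$. The idea is to show that $v \leq u^n$ on $\Omega \times [t_1, T]$ for every $n$ large and every finite $T$; the conclusion will then follow on letting $n \to \infty$.

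The first task is to verify $v < u^n$ strictly on the parabolic boundary of $\Omega \times [t_1, T]$. At $t = t_1$, the strict separation $v \prec u$ together with $u^n \geq 1/n$ yields $v(x, t_1) < u^n(x, t_1)$ for every $x \in \overline{\Omega}$: on $\supp v(\cdot, t_1)$ we have $v < u \leq u^n$, and off the support $v = 0 < u^n$. On the lateral portion $\del\Omega \times (t_1, T]$ I will use the implicit ordering $v \leq u$ (the lateral compatibility from Definition~\ref{D:visc_supsol}(ii)) to obtain $v < u^n$ there as well.

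Next I argue by contradiction, as in Step~4 of the proof of Theorem~\ref{T:exist_maximal}. Suppose $v(P^\ast) > u(P^\ast)$ at some $P^\ast = (x^\ast, t^\ast)$ with $t^\ast > t_1$; by locally uniform convergence $v(P^\ast) > u^n(P^\ast)$ for $n$ sufficiently large, so there is a first time $t_n > t_1$ at which $v$ touches $u^n$ from below at a point $P_n = (x_n, t_n)$. By the previous paragraph $x_n$ must be interior, and $v(P_n) = u^n(P_n) \geq 1/n > 0$. I then choose a neighborhood $W_n \ni x_n$ (for instance a suitable sub-level set of $u^n - v$ at time $t_n$) and $\delta_n > 0$ so that on $Q_n = W_n \times [t_n - \delta_n, t_n]$ the function $v$ is bounded below by a positive constant and $v < u^n$ on $\del_p Q_n$, and apply Theorem~\ref{T:comparison_pos} to the non-degenerate equation \eqref{E:mund} in $Q_n$.

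The main obstacle is the following: Theorem~\ref{T:comparison_pos}, as stated, yields only $v \leq u^n$ in $Q_n$, which is consistent with the interior touching $v(P_n) = u^n(P_n)$ rather than contradicting it. The remedy is the rescaling trick already used inside the proof of Theorem~\ref{T:comparison_pos}: I will replace $u^n$ by $\tilde u^n(x, t) = e^{\gamma(t - t_n + \delta_n)} u^n(x, h(t))$ with $\gamma > 0$ sufficiently small, so that $\tilde u^n$ becomes a strict supersolution, the strict gap $v < u^n$ on $\del_p Q_n$ persists as $v < \tilde u^n$ there, and simultaneously $v(P_n) > \tilde u^n(P_n)$; the doubling-of-variables argument from the proof of Theorem~\ref{T:comparison_pos} then produces the desired contradiction.
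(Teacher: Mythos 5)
Your overall strategy is the one the paper itself uses: the paper ``proves'' this theorem merely by pointing back to Steps 3--4 of the proof of Theorem~\ref{T:exist_maximal} (compare $v$ with the strictly positive approximations $u^n\geq u$, note $v<u^n$ at $t=t_1$ and on the lateral boundary, localize near a first touching point, invoke the comparison principle for positive solutions, and let $n\to\infty$), and you have correctly spotted the point the paper glosses over, namely that Theorem~\ref{T:comparison_pos} only yields $v\leq u^n$, so a touching at the top of a backward cylinder is not by itself a contradiction. The trouble is that your remedy does not close this gap. The rescaled function $\tilde u^n(x,t)=e^{\gamma(t-t_n+\delta_n)}u^n(x,h(t))$ multiplies $u^n$ by a factor $\geq 1$ and composes with a forward time change $h(t)\geq t$; at $P_n$ it equals $e^{\gamma\delta_n}\,u^n(x_n,h(t_n))$ with $h(t_n)>t_n$, and there is no reason for this to lie below $v(P_n)=u^n(P_n)$ --- indeed it lies strictly above it whenever $u^n$ is nondecreasing in time near $P_n$. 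The transformation in the proof of Theorem~\ref{T:comparison_pos} is designed to keep the supersolution above the subsolution while making it strict; it cannot simultaneously be pushed below the subsolution at a prescribed interior point. Moreover, even granting your properties (a) and (b), the strict comparison gives $v<\tilde u^n$ in $Q_n$, and letting $\gamma\to 0$ only recovers $v\leq u^n$: the touching at $P_n$ is never contradicted. More fundamentally, comparison alone cannot rule out an interior touching of a subsolution and a (non-strict) supersolution; the contradiction must come from a point where $v$ \emph{strictly exceeds} $u^n$, i.e.\ from times beyond $t_n$, which your cylinder, stopped at $t_n$, never sees.

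The repair is to carry the comparison region past the touching time, up to a time containing the point $P^*$ with $v(P^*)>u^n(P^*)$. For instance, work on the set $\{v>\tfrac1{2n}\}\cap\bigl(\Omega\times(t_1,t^*]\bigr)$: on its parabolic boundary either $v=\tfrac1{2n}<\tfrac1n\leq u^n$, or the point lies at $t=t_1$ or on $\del\Omega\times(t_1,t^*]$ where $v<u^n$ by your first paragraph; both functions are bounded below by $\tfrac1{2n}$ there, and the doubling-of-variables argument behind Theorem~\ref{T:comparison_pos} only needs the penalized maximum to be interior, so it works on such non-cylindrical domains and yields $v\leq u^n$ on this set, directly contradicting $v(P^*)>u^n(P^*)$; then $n\to\infty$ gives $v\leq u$. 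If you insist on the touching-point framing, you would instead need both a uniformly positive gap $v<u^n$ on all of $\del_p Q_n$ (which your level-set choice does not guarantee at the corner $\del W_n\times\{t_n\}$, where other touching points may sit) and a transformation that strictly \emph{lowers} $u^n$ near $P_n$ (e.g.\ an extra factor $1-\mu$ combined with the Lipschitz-in-time estimate of Theorem~\ref{T:lip_t} to control the time change), not the gap-preserving rescaling. A minor further point: Definition~\ref{D:visc_supsol}(ii) gives no ``implicit ordering'' of $v$ and $u$ on the lateral boundary; what you actually need (and may assume, since $v$ solves the same initial--boundary value problem) is that $v$ and $u$ share the lateral data $g$, so that $v=g<g+\tfrac1n=u^n$ there.
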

Here $u\prec v$ means $u$ is strictly separated from $v$ as defined in Section~\ref{S:viscosity_solutions}.
The following facts are useful and easy to check.
\begin{theorem}
The Barenblatt functions and the traveling wave solutions of the previous section are maximal solutions (in the whole space)
of \eqref{E:mu}.
\end{theorem}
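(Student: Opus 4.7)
The assertion has two parts: (i) each of $\mathcal{B}_R$ and the traveling wave $u_{TW}$ is a viscosity solution of \eqref{E:mu}, and (ii) each coincides with the maximal viscosity solution determined by its own initial data.

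For (i), on the open positivity set both families are smooth classical solutions of \eqref{E:mu} with $|Du|\neq 0$, so the inequalities in condition (i) of Definitions~\ref{D:visc_subsol} and \ref{D:visc_supsol} hold trivially at interior positivity points. In the interior of the vanishing region, any $C^{2,1}$ test function $\phi$ touching from above at a point $P_0$ must satisfy $\phi(P_0)=0$, $\phi\geq 0$ nearby, hence $D\phi(P_0)=0$, $D^2\phi(P_0)\geq 0$ and $\phi_t(P_0)\leq 0$, making the subsolution inequality immediate; the supersolution condition (i) is vacuous at zero points. At a free-boundary point one uses smoothness up to $\partial \mathcal{P}(u)$ and the kinematic identity $\sigma_n = |Du| \neq 0$ (Definition~\ref{D:cfbs}) together with the reduction of Lemma~\ref{L:weak_sol_cond} to check the touching inequalities. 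Finally, condition (ii) of Definition~\ref{D:visc_supsol} is verified by a first-touching argument against a classical moving free-boundary subsolution $w$ with $w\prec\mathcal{B}_R$ at $t_1$: at a hypothetical first contact time $t^*$ and point $P^*$, either $P^*$ lies in the positivity set of $\mathcal{B}_R$, where the classical parabolic maximum principle for the quasilinear form of \eqref{E:mu} on the joint positivity set yields a contradiction, or $P^*$ sits on $\partial\mathcal{P}(\mathcal{B}_R)$, where the boundary kinematics $\sigma_n^{w}\leq |Dw| < |D\mathcal{B}_R| = \sigma_n^{\mathcal{B}_R}$ (the strict inequality inherited from the strict initial separation by continuity, using that $|D\mathcal{B}_R|\neq 0$ on the free boundary) prevents the free boundary of $w$ from catching up. The traveling wave is handled identically and is even easier since it depends on a single spatial variable.

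For (ii), we imitate the approximation used in Theorem~\ref{T:exist_maximal}. Let $u^n$ be the strictly positive solution with initial data $\mathcal{B}_R(\cdot,t_0)+1/n$, obtained from Theorem~\ref{T:exist_pos} on a ball large enough (by finite propagation, visible directly from the Barenblatt) to contain the relevant supports, with appropriate boundary data. Then $u^n$ decreases to a limit $\bar{u}$ which is the maximal solution, and $\bar{u} \geq \mathcal{B}_R$ automatically since $\mathcal{B}_R$ is a viscosity solution. The reverse inequality is obtained by constructing explicit upper barriers of the form $B^n(x,t)=\mathcal{B}_{R+\delta_n}(x, t+\tau_n)+1/n$ with $\delta_n,\tau_n\downarrow 0$ chosen so that $B^n(\cdot, t_0)\geq \mathcal{B}_R(\cdot,t_0)+1/n$. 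One verifies that $B^n$ is a viscosity supersolution of \eqref{E:mu} (using $\lapi \mathcal{B}_{R+\delta_n}<0$ on its positivity set, the fact that $B^n \equiv 1/n$ off this set, and a check of the degenerate supersolution inequality at each free-boundary point), so that Theorem~\ref{T:comparison_pos_strong} gives $u^n\leq B^n$; passing to the limit yields $\bar{u}\leq \mathcal{B}_R$, hence $\bar{u}=\mathcal{B}_R$. For the traveling wave the same scheme applies, except that because the sign of $\lapi u_{TW}$ depends on $m$, the additive constant $1/n$ must be replaced in some regimes by a perturbation of the wave speed $c$ combined with a vertical shift.

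The main obstacle is verifying that the upper barrier $B^n$ is actually a viscosity supersolution across its own free boundary, where $Du$ jumps from a non-zero value to zero; this is the only place where the degenerate condition (i) of Definition~\ref{D:visc_supsol} really has to be checked. A secondary technical point is transferring Theorem~\ref{T:comparison_pos_strong}, which is stated for bounded cylinders, to the Cauchy setting: this is handled by working on a sufficiently large ball and exploiting finite propagation, so that no interaction with the lateral boundary occurs on the time interval under consideration.
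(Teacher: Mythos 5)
There is a genuine gap, and it sits exactly at the point you yourself flag as ``the main obstacle'': the proposed upper barrier $B^n(x,t)=\mathcal{B}_{R+\delta_n}(x,t+\tau_n)+1/n$ is \emph{not} a viscosity supersolution of \eqref{E:mu} (nor of \eqref{E:mund}), so Theorem~\ref{T:comparison_pos_strong} cannot be invoked to get $u^n\leq B^n$. At a point $P_0$ on the free boundary of $\mathcal{B}_{R+\delta_n}$ you have $B^n(P_0)=1/n>0$, so condition (i) of Definition~\ref{D:visc_supsol} does apply there, and $B^n$ has a \emph{convex} kink: along the inward normal coordinate $s$ it behaves like $1/n+b\,(c(t-t_0)-s)_+$ with $b=|D\mathcal{B}_{R+\delta_n}|=c>0$. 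Functions of the form $\phi=1/n-\alpha s+bc\,(t-t_0)+\tfrac{C}{2}s^2$ with $0<\alpha<b$ and $C$ arbitrarily large touch $B^n$ from below at $P_0$ (in a backward neighbourhood of size $O(1/C)$, which is all the definition requires), and for them $D\phi\neq0$, $\lapi\phi=C$, $|D\phi|^2=\alpha^2$, while $\phi_t=bc$ is fixed; hence the required inequality $\phi_t\geq k\phi\,\lapi\phi+|D\phi|^2$ fails as soon as $C>n(bc-\alpha^2)/k$. The failure is structural, not technical: any barrier that equals the positive constant $1/n$ outside a compact set and meets it with nonzero interior slope is positive at the kink, so the degenerate term $k\phi\lapi\phi$ no longer vanishes there and unbounded test curvatures destroy the supersolution property. (For $\mathcal{B}_R$ itself this issue does not arise only because it vanishes on its free boundary, where condition (i) is vacuous.) A second, smaller gap is in your part (i): at a boundary-to-boundary first contact with a classical moving subsolution $w$, strict separation at $t_1$ does not ``by continuity'' give $|Dw|<|D\mathcal{B}_R|$ at the later contact point; the chain $\sigma_n^{w}\leq|Dw|\leq|D\mathcal{B}_R|=\sigma_n^{\mathcal{B}_R}$ may close with equalities, so no contradiction follows without a finer argument.

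Note also that the paper's proof takes a different and much shorter route that avoids all free-boundary verifications: the approximations $u^n$ of Theorem~\ref{T:exist_maximal}, built from the radially symmetric data $\max\{1/n,\mathcal{B}_R(\cdot,t_0)\}$ on a large cylinder, are themselves radial by uniqueness, hence solve in the variable $r=|x|$ the one-dimensional PME pressure equation; the convergence of this monotone approximation to the Barenblatt profile is then quoted from \cite{BV}, and since $\lim_n u^n$ is by construction the maximal solution, the identification is complete (the traveling wave, depending on one space variable, is handled the same way). If you want to salvage your barrier scheme you would have to replace $B^n$ by a strictly positive supersolution that is $C^1$ across the transition region --- e.g.\ the explicit $1$-$d$ PME solution with constant positive background --- which in effect amounts to redoing the one-dimensional analysis that the paper simply cites.
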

\begin{proof}
We check this for the Barenblatt functions of the form $\mathcal{B}_R(x,t+t_0)$ with $t_0>0$. Take $r\gg R$ and consider
the approximation given in the proof of Theorem~\ref{T:exist_maximal} at the beginning of this section, $u^n$ with initial
condition $g^n = \max\{1/n,\,\mathcal{B}(\cdot,t_0)\}$ on the cylinder $Q_r = B(0,r)\times[0,t]$. In radial coordinates,
the equation is simply the regular (PME) in one spacial dimension, therefore, as was proved in \cite{BV}, as $n\to\infty$,
the solution converges to the Barenblatt function in $Q_r$. A similar argument works for the traveling wave solutions.
\end{proof}

It is worth recording the following fact about the independence of the maximal solution on the approximation
procedure to obtain it.
\begin{theorem}\label{T:independent_approx}
Let $\{h^n \mid n\geq1\}$ be a family of functions defined on $\Gamma$ satisfying $h^{n} \geq h^{n+1} > 0$, $h^n$ is
continuous on $\Gamma$ and Lipschitz continuous on the vertical portion of $\Gamma$ for each $n\geq1$ and $\lim_n h_n = g$.
Let $v^n$ be the unique solution of \eqref{E:mu}. Then $\lim_n v^n$ is the maximal viscosity solution from
Theorem~\ref{T:exist_maximal}.
\end{theorem}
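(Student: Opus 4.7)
The plan is to set $v := \lim_n v^n$ and to show $v = \bar{u}$ by proving the two inequalities separately.

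First I would check that $(v^n)_n$ is monotone decreasing, so the limit $v$ is well defined. Each $h^n$ is continuous on the compact set $\Gamma$ and strictly positive, hence bounded below by a positive constant, and is Lipschitz on the lateral portion of $\Gamma$ by hypothesis. Theorem~\ref{T:exist_pos} therefore applies and produces a unique viscosity solution $v^n$. Applying the strong comparison result of Theorem~\ref{T:comparison_pos_strong} to the pair $(v^{n+1}, v^n)$ (both are bounded below away from zero and their boundary data $h^{n+1} \leq h^n$ are Lipschitz on the lateral boundary) yields $v^{n+1} \leq v^n$ in $Q$. Thus $v := \lim v^n$ exists pointwise, is nonnegative, and coincides with $g$ on $\Gamma$ since $v^n = h^n \to g$ there.

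To prove $v \leq \bar{u}$ I would invoke Dini's theorem: $h^n \searrow g$ monotonically, each $h^n$ and $g$ are continuous, and $\Gamma$ is compact, so the convergence is uniform. Fix $m \geq 1$; there exists $n_0 = n_0(m)$ such that $h^n \leq g + 1/m$ on $\Gamma$ for every $n \geq n_0$. Denoting by $u^m$ the approximating solution used in the proof of Theorem~\ref{T:exist_maximal} (unique viscosity solution with data $g + 1/m$), Theorem~\ref{T:comparison_pos_strong} applied on $\Gamma$ gives $v^n \leq u^m$ in $Q$. Sending $n \to \infty$ yields $v \leq u^m$ for each $m$, and sending $m \to \infty$ gives $v \leq \bar{u}$.

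For the reverse inequality $v \geq \bar{u}$ I would exploit the uniqueness statement in Theorem~\ref{T:exist_pos}: since $v^n$ is \emph{the} viscosity solution with data $h^n$, it must coincide with the maximal viscosity solution with data $h^n$. The monotonicity of maximal solutions (the ordering theorem established right after Theorem~\ref{T:exist_maximal}), applied to $g \leq h^n$ in $C(\Gamma)$, then gives $\bar{u} \leq v^n$ in $Q$ for every $n$, hence $\bar{u} \leq v$. Combined with the previous inequality we conclude $v = \bar{u}$.

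There is no real technical obstacle; the essential point is choosing the right comparison on each side. The upper bound is extracted by squeezing $h^n$ beneath the specific approximation $g+1/m$ used to construct $\bar u$ (which needs uniform convergence, hence Dini), while the lower bound is obtained for free from the already-established monotone dependence of maximal solutions on their boundary data together with the fact that unique classical-range solutions are automatically maximal.
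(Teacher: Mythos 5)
Your proof is correct, but you reach the double inequality $v\leq\bar u$ and $\bar u\leq v$ by routes that differ from the paper's in both halves. For $v\leq\bar u$, the paper re-runs the argument of Theorem~\ref{T:exist_maximal} to show that the limit $v$ is itself a viscosity solution with data $g$ and then appeals to maximality of $\bar u$; you instead invoke Dini's theorem to upgrade the monotone convergence $h^n\searrow g$ on the compact set $\Gamma$ to uniform convergence, and then squeeze $v^n$ beneath the specific approximation $u^m$ (data $g+1/m$) via comparison, which avoids re-running the viscosity verification. For $\bar u\leq v$, the paper compares $u^m$ directly with $v^n$ after choosing $m>n$ so that $g+1/m<h^n$ on $\Gamma$; you instead identify $v^n$ with the maximal solution with data $h^n$ (using the uniqueness part of Theorem~\ref{T:exist_pos}) and invoke the ordering theorem for maximal solutions applied to $g\leq h^n$. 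Your second half is actually the more robust one: the paper's step silently requires $\min_\Gamma(h^n-g)>0$, which the hypotheses $h^n\geq h^{n+1}>0$ and $\lim_n h^n=g$ do not quite guarantee (equality $h^n(P)=g(P)$ at a boundary point is permitted), whereas the ordering-theorem argument sidesteps any such strict-separation requirement. Both routes are valid and produce the same theorem; yours is shorter and rests only on machinery already established, with the fact that $v$ is a viscosity solution coming out for free once $v=\bar u$ is known.
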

\begin{proof}
The proof of Theorem~\ref{T:exist_maximal} can be reproduced to show that $v = \lim_n v^n$ is a viscosity solution of
\eqref{E:mu}. It is therefore enough to observe that for any given $n\geq1$ there exists $m>n$ such that $g^m < h^n$ on
$\Gamma$ (where $g^n$ is as in the proof of Theorem~\ref{T:exist_maximal}) and use the comparison result to conclude that
$u^m \leq v^n$. Now take the limit in $m$ and in $n$ to see that $u \leq v$.
\end{proof}

\noindent {\sc Uniqueness of viscosity solutions.} If the domain $\Omega$ is sufficiently nice and the normal derivative on $\del\Omega$ does not vanish, then we can obtain
a uniqueness result. We will assume $\Omega$ satisfies the following \textit{contracting property}, which is slightly
stronger than being star shaped: there exist $x_0\in\Omega$, $\lambda_* <1$ and $\alpha>0$ such that
\begin{equation}\label{P:contracting_prop}
	\text{for every $x\in\del\Omega$ and every $\lambda\in[\lambda_*,1)$,} \qquad
		d(x_0 +\lambda (x-x_0),\del\Omega) \geq \alpha \lambda. \tag{CP}
\end{equation}
\begin{theorem}
Assume $\del\Omega\in C^1$ and $\Omega$ satisfies the above contracting property \eqref{P:contracting_prop}.
Assume further that $u_0\in W^{1,\infty}(\Omega)$ and that the normal derivative of $u_0$ on $\del\Omega$ satisfies
\[
	\left.\frac{\del u_0}{\del\nu}\right|_{\del\Omega} \leq -m < 0.
\]
Then the maximal solution of the Dirichlet problem \eqref{E:DP} is the unique viscosity solution of this problem.
\end{theorem}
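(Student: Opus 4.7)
By Theorem~\ref{T:exist_maximal}, every viscosity solution $v$ of the Dirichlet problem already satisfies $v \leq \bar u$; it remains to establish $\bar u \leq v$. Following the dilation strategy of \cite{CV} and \cite{BV}, for $\lambda \in [\lambda_*, 1)$ introduce the contracted solution
\[
	\bar u^\lambda(x, t) \bydef \lambda^{2}\, \bar u\bigl(x_0 + \lambda^{-1}(x - x_0),\, t\bigr), \qquad x \in \overline{\Omega_\lambda},\ t \in [0, T],
\]
with $\Omega_\lambda := x_0 + \lambda(\Omega - x_0)$. Using the $1$-homogeneity of $\lapi$ in $u$ and its transformation as a pure second spatial derivative, a direct computation yields $(\bar u^\lambda)_t = \lambda^2 \bar u_t$, $|D\bar u^\lambda|^2 = \lambda^2 |D\bar u|^2$ and $\bar u^\lambda\,\lapi \bar u^\lambda = \lambda^2\, \bar u\,\lapi \bar u$, whence $\bar u^\lambda$ is again a viscosity solution of \eqref{E:mu} on $\Omega_\lambda \times [0, T]$ with vanishing trace on $\partial \Omega_\lambda$. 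By \eqref{P:contracting_prop} we have $\dist(\partial\Omega_\lambda, \partial\Omega) \geq \alpha(1-\lambda)$, so after extension by zero the positivity set of $\bar u^\lambda$ is compactly contained in $\Omega$.

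The heart of the argument is the quantitative strict separation $\bar u^\lambda(\cdot, 0) \prec v(\cdot, 0) = u_0$. Writing $y = x_0 + \lambda^{-1}(x - x_0)$ so that $x - y = (1-\lambda)(x_0 - y)$,
\[
	u_0(x) - \bar u^\lambda(x, 0) = \bigl(u_0(x) - u_0(y)\bigr) + (1 - \lambda^2)\, u_0(y).
\]
The Lipschitz bound $|u_0(x) - u_0(y)| \leq L(1-\lambda)|y - x_0|$, together with the pointwise lower estimate $u_0 \geq m\,\dist(\cdot, \partial\Omega)$ which follows by Taylor expansion along the inward normal from $u_0|_{\partial\Omega} = 0$, $\partial\Omega \in C^1$ and $\partial u_0/\partial\nu \leq -m$, yields (after splitting $y$ according as $\dist(y, \partial\Omega) \leq \delta$ or $> \delta$ and using \eqref{P:contracting_prop} to bound $\dist(x, \partial\Omega) \geq \alpha(1-\lambda)$ from below) the uniform estimate $u_0(x) - \bar u^\lambda(x, 0) \geq c_0(1-\lambda) > 0$ on $\supp \bar u^\lambda(\cdot, 0)$, after possibly enlarging $\lambda_*$. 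No lateral separation is needed because $\bar u^\lambda \equiv 0$ on $\partial \Omega$.

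With this in hand, I would invoke condition (ii) of Definition~\ref{D:visc_supsol}---with $v$ as the viscosity supersolution and $\bar u^\lambda$ as the lower (classical moving free-boundary) subsolution---to conclude $\bar u^\lambda \leq v$ on $\Omega \times [0, T]$. Letting $\lambda \uparrow 1$, continuity of $\bar u$ supplied by Theorem~\ref{T:exist_maximal} gives $\bar u^\lambda \to \bar u$ locally uniformly, whence $\bar u \leq v$ and $v = \bar u$.

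The main obstacle is a regularity mismatch: Definition~\ref{D:visc_supsol}(ii) demands the lower function be a \emph{classical moving free-boundary} subsolution, while $\bar u^\lambda$ is only a viscosity solution. I would bridge this by an intermediate approximation: rescale the strictly positive approximants $u^n$ of Theorem~\ref{T:exist_maximal} to obtain strictly positive solutions $(u^n)^\lambda$ on $\Omega_\lambda \times [0, T]$ and apply the strong comparison Theorem~\ref{T:comparison_pos_strong} on a slightly smaller cylinder $\Omega_{\lambda'} \times [0, T]$ with $\lambda' < \lambda$ chosen so that $v > (u^n)^\lambda$ on $\partial \Omega_{\lambda'} \times [0, T]$---available for short times by continuity and the linear margin of the previous step, then extended to $[0, T]$ by iterating over short intervals via the time-translation invariance of \eqref{E:mu}. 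The dual hypotheses on $u_0$ (Lipschitz plus strictly negative normal derivative) are precisely what supply both the Lipschitz-in-time bound needed to apply Theorem~\ref{T:comparison_pos_strong} and the linear boundary margin needed for this approximation; passing to the limits $n \to \infty$, $\lambda' \to \lambda$, and $\lambda \to 1$ completes the proof.
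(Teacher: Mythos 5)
Your overall strategy---contract the maximal solution toward an interior point, separate it strictly from $u_0$ at $t=0$ using the Lipschitz bound and the negative normal derivative, compare, and let $\lambda\uparrow 1$---is exactly the paper's strategy. But there is a genuine gap in your separation estimate, and it comes from using the scaling $\bar u^\lambda(x,t)=\lambda^{2}\bar u(x_0+\lambda^{-1}(x-x_0),t)$. The paper instead scales by $\lambda^{2+\gamma}$ (with a concomitant time rescaling $t\mapsto\lambda^\gamma t$, which the equation's homogeneity forces), and the extra exponent $\gamma$ is not decorative: it is what makes the interior part of the separation argument close. With your $\lambda^2$ scaling, in the region where $y=x_0+\lambda^{-1}(x-x_0)$ is far from $\partial\Omega$ you obtain
\[
u_0(x)-\bar u^\lambda(x,0)\geq (1-\lambda^2)\,u_0(y) - L(1-\lambda)\,|y-x_0| = (1-\lambda)\bigl[(1+\lambda)\,u_0(y)-L\,|y-x_0|\bigr],
\]
and as $\lambda\uparrow1$ the bracket tends to $2u_0(y)-L|y-x_0|$, which has no reason to be positive (take a large domain, or $L$ large relative to $\min u_0$). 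Enlarging $\lambda_*$ does not fix this, since the bracket's limit is independent of $\lambda$. The paper resolves it precisely by replacing the prefactor $2$ with $2+\gamma$ and choosing $\gamma$ so large that $(2+\gamma)\min_{\Omega_{\tilde\lambda}} u_0 - LR>0$; the near-boundary region is then controlled by the contracting property and the normal-derivative bound as you describe, the choice of $\gamma$ playing no role there because $u_0$ is small. So your proof needs the extra free parameter in the dilation, and a corresponding time rescaling to keep $\bar u^\lambda$ a solution.

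Your discussion of the comparison step is a secondary issue, but worth a remark. You are right that $\bar u^\lambda$ is not a classical moving free-boundary subsolution, so Definition~\ref{D:visc_supsol}(ii) cannot be invoked directly, and your proposed bridge through the strictly positive approximants and Theorem~\ref{T:comparison_pos_strong} has its own problem: that theorem requires \emph{both} functions to be bounded below by a positive constant, and the competitor $v$ has zero boundary data and a priori may vanish at interior points. The paper is itself laconic here (``we can apply the comparison result in $Q_\lambda$''); what one actually needs is the positivity-localization trick used in Step~4 of the proof of Theorem~\ref{T:exist_maximal}, where comparison is run on a small cylinder around the first crossing point, chosen so that both functions stay positive there. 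Adapting that argument, rather than invoking Theorem~\ref{T:comparison_pos_strong} globally, is the way to make this step rigorous.
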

\begin{proof}
Suppose $u$ is the maximal viscosity solution and $v$ is another solution of \eqref{E:DP}. After a translation we can
assume $x_0=0$. For
$\lambda\in[\lambda_*,1)$ define $u^\lambda:\Omega_\lambda \to\R$,
\[
	u^\lambda(x,t) \bydef \lambda^{2+\gamma} u\left(\frac{x}{\lambda},\lambda^\gamma t\right),
\]
where $\Omega_\lambda \bydef \lambda \Omega = \{\lambda x \mid x\in\Omega \}$ and $\gamma$ will be chosen below.
Likewise define $u_0^\lambda(x) = \lambda^{2+\gamma}u_0(x/\lambda)$. The function $u^\lambda$ is a viscosity solution of the
equation in $\Omega_\lambda \subset \Omega$ with initial data $u_0^\lambda$. From the assumptions on $\Omega$ and
$Du_0$ we see that choosing $\tilde{\lambda} \geq\lambda_*$ sufficiently close to $1$ and for $x\in\Omega\setminus
\Omega_{\tilde{\lambda}}$ we have
\[
	\left.\frac{d}{d\lambda} u^\lambda_0(x) \right|_{\lambda=1^-} = (2+\gamma)u_0(x) - Du_0(x)\cdot x \geq m\alpha >0.
\]
On the other hand, for $x\in\Omega_{\tilde{\lambda}}$ we can choose $\gamma$ independent of $\lambda>\tilde{\lambda}$ such that
\[
	\left.\frac{d}{d\lambda} u^\lambda_0(x) \right|_{\lambda=1^-} = (2+\gamma)u_0(x) - Du_0(x)\cdot x \geq
		(2+\gamma) \min_{\Omega_{\tilde{\lambda}}} u_0 - LR > 0,
\]
where $L$ is the Lipschitz constant of $u_0$ and $R$ is the radius of a ball containing $\Omega$. Hence, we conclude that
for $\lambda\in(\tilde{\lambda},1)$ the function $u^\lambda_0$ is strictly below $u_0$ in $\Omega_\lambda$.

We can apply the comparison result in $Q_\lambda \bydef \Omega_\lambda\times[0,T]$ to conclude that $u^\lambda < v$ in
this set. Letting $\lambda\to 1^-$ we obtain $u \leq v$. Since $u$ is maximal, this means that $u=v$.
\end{proof}

\section{Existence and regularity. Proof of Theorem~\ref{T:exist_pos}}\label{S:exist}

The existence portion of Theorem~\ref{T:exist_pos} is obtained, with the necessary adaptations, following
the approximation procedure introduced in \cite{JK}.
Define, for $\eps \geq 0$ and $\delta >0$,
\[
	\mathcal{L}^{\eps,\delta} u = \eps\lap u + \frac{k\beta_c(u)}{|Du|^2 + \delta^2}
	\langle D^2 u \cdot Du, Du \rangle,
\]
with $\beta_c$ as above, and consider the equation
\begin{equation}\label{E:appmu}
\begin{cases}
	f_t = \mathcal{L}^{\eps,\delta} f + |Df|^2 & \;\;\text{in $Q$},\\
	f(P) = g(P) & \;\; \text{on $\Gamma$}.
\end{cases}
\end{equation}
We estimate the solutions of the approximations and then first let $\eps\to0$ and then $\delta\to0$.

\subsection{Lipschitz estimate in time}
We start with Lipschitz regularity in $t$ and then prove the regularity in $x$.

\begin{theorem}\label{T:lip_t0}
Suppose $g\in C^2(\overline{Q})$, $g\geq c > 0$ and $f = f^{\eps,\delta}$ is a smooth solution of \eqref{E:appmu}.
Then there exists $K_1>0$ depending only on $\|D^2g\|_\infty$, $\|Dg\|_\infty$, $\|g\|_\infty$ and
$\|g_t\|_{\infty}$ such that
\[
	|f(x,t) - g(x,0)| \leq K_1 t
\]
in $Q$. If $g$ is only continuous in $x$ and bounded in $t$ then the modulus of continuity of $f$
on $\Omega\times[0,t_*]$ (for small $t_*$) can be estimated in terms of $\|g\|_\infty$ and the modulus of
continuity of $g_0\bydef g(\cdot,0)$ in $x$.
\end{theorem}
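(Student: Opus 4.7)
The plan is a barrier argument built on the observation that for $\eps>0$ the regularized equation \eqref{E:appmu} is uniformly parabolic with smooth coefficients in $u$ and its derivatives (the quotient involving $|Du|^2+\delta^2$ is nonsingular), so the classical parabolic comparison principle applies to the smooth solution $f=f^{\eps,\delta}$. I will construct global affine-in-time barriers $\phi^\pm(x,t)\bydef g(x,0)\pm K_1 t$ for the first part and localized paraboloid barriers for the second part, with $K_1$ (resp.\ the parameters of the paraboloid) chosen so that $\phi^+$ is a supersolution and $\phi^-$ a subsolution, each lying on the correct side of $f$ on $\Gamma$.

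\textbf{Smooth data.} With $\phi^\pm(x,t)\bydef g(x,0)\pm K_1 t$, at $t=0$ these agree with $f$ on $\Omega$, and on $\partial\Omega\times[0,T]$ we have $\phi^+\geq g=f$ (resp.\ $\phi^-\leq f$) as soon as $K_1\geq\|g_t\|_\infty$. In the interior, $D\phi^\pm=Dg(\cdot,0)$ and $D^2\phi^\pm=D^2g(\cdot,0)$, so using $|\langle D^2g\,Dg,Dg\rangle|\leq\|D^2g\|_\infty|Dg|^2$ and $|Dg|^2/(|Dg|^2+\delta^2)\leq 1$, I estimate
\[
\mathcal{L}^{\eps,\delta}\phi^+ + |D\phi^+|^2\;\leq\; d\|D^2g\|_\infty + k\|\phi^+\|_\infty\|D^2g\|_\infty + \|Dg\|_\infty^2.
\]
This must be $\leq \phi^+_t=K_1$. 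Since $\|\phi^+\|_\infty\leq\|g\|_\infty+K_1 t$, closing the inequality on a short subinterval $[0,t_*]$ on which $K_1 t_*\leq\|g\|_\infty$ gives an admissible $K_1$ depending only on the quantities listed in the theorem; iterating across $[0,T]$ extends the estimate to all of $Q$, and the symmetric argument for $\phi^-$ produces the matching lower bound $|f(x,t)-g(x,0)|\leq K_1 t$.

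\textbf{Merely continuous $g_0$.} Here global $C^2$ bounds are unavailable, so I replace the global barrier by local paraboloids. Fix $x_0\in\Omega$, let $\omega$ denote the modulus of continuity of $g_0$, and for $\eta>0$ define
\[
\phi^\eta(x,t)\bydef g_0(x_0)+\omega(\eta)+\frac{A}{\eta^2}|x-x_0|^2+C(\eta)\,t
\]
on $B(x_0,\eta)\times[0,t_*]$. Choose $A=2\|g\|_\infty$ so that $\phi^\eta\geq f$ on $\partial B(x_0,\eta)\times[0,t_*]$, and note that $g_0(x)-g_0(x_0)\leq\omega(\eta)$ on $B(x_0,\eta)$ gives $\phi^\eta(\cdot,0)\geq g_0$. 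With $D\phi^\eta=(2A/\eta^2)(x-x_0)$ and $D^2\phi^\eta=(2A/\eta^2)I$, the same type of estimate as above shows $\phi^\eta$ is a supersolution provided $C(\eta)$ is of order $\|g\|_\infty^2/\eta^2$. Comparison yields $f(x_0,t)\leq g_0(x_0)+\omega(\eta)+C(\eta)\,t$; a matching lower barrier gives the opposite inequality, and optimizing $\eta=\eta(t)$ (for instance $\eta=t^{1/3}$) balances $\omega(\eta)$ against $C(\eta)\,t$ and produces a modulus of continuity for $f$ at $(x_0,0)$ depending only on $\omega$ and $\|g\|_\infty$.

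\textbf{Main obstacle.} The principal technical point is that all constants must be genuinely independent of $\eps$ and $\delta$. The $\eps\lap$ contribution is harmless since $\eps\leq 1$ and $\lap$ is applied to a fixed smooth function, while the singular quotient is bounded by $k\|\phi\|_\infty\|D^2\phi\|_\infty$ independently of $\delta$ thanks to the $|D\phi|^2$ in the numerator being dominated by $|D\phi|^2+\delta^2$ in the denominator. The remaining subtlety is the implicit dependence of the barrier calculation on $\|\phi^+\|_\infty$, which itself grows linearly with $K_1 t$; this closes cleanly only on short subintervals, so the full-time bound requires an elementary iteration, and the analogous balance in the localized second part is achieved through the choice of $\eta=\eta(t)$.
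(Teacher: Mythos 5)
Your proposal is correct and follows essentially the paper's own route: explicit barriers of the form $g(x,0)$ plus an increasing function of $t$, compared with the smooth solution $f^{\eps,\delta}$ through the classical comparison principle for the uniformly parabolic regularization (with the singular quotient bounded exactly as you do, uniformly in $\eps,\delta$), and, for merely continuous data, paraboloids of opening of order $\|g\|_\infty/\rho^2$ built from the modulus of continuity of $g_0$. The only harmless variations are that the paper takes the exponential barrier $g_0(x)\pm\lambda\left(e^{\lambda t}-1\right)$ instead of your linear-in-time barrier with a short-time closing, and in the continuous case it compares $f$ with the auxiliary solutions $f^\pm$ whose data are those paraboloids (choosing the paraboloid radius smaller than $\dist(x_0,\del\Omega)$ so that the lateral boundary is dominated --- a point you should also impose on your local cylinder, i.e.\ $\eta<\dist(x_0,\del\Omega)$), rather than using the paraboloids directly as local supersolutions as you do.
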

\begin{proof}
Step 1. Assume $g \in C^2$ and $\lambda > 0$. If we define
\[
	v^+(x,t) = g_0(x) + \lambda (e^{\lambda t} - 1) = g(x,0) + \lambda(e^{\lambda t} - 1),
\]
then $v^+$ is a supersolution. Indeed,
\[\begin{aligned}
	v^+_t - \mathcal{L}^{\eps,\delta} & v^+ - |Dv^+|^2 \\
		&= \lambda^2 e^{\lambda t} - \eps\lap g_0 - k\frac{g_0 + \lambda (e^{\lambda t} - 1)}
			{|Dg_0|^2 + \delta^2} \langle D^2g_0\cdot Dg_0, Dg_0 \rangle - |Dg_0|^2 \\
	& \geq e^{\lambda t} \left( \lambda^2 - \|D^2g_0\|_{\infty} \lambda \right)
		- \left( (\eps +k\|g_0\|_{\infty})\|D^2g_0\|_{\infty} + \|Dg_0\|^2_\infty \right)\\
	&\geq 0
\end{aligned}\]
if we choose $\lambda >0$ large enough. In fact, if $\|D^2g_0\|_\infty \leq 1$ we can choose $\lambda > 1 +
\sqrt{(1+k\|g\|_\infty) + \|D^2g_0\|_\infty}$ and if $\|D^2g_0\| >1$ we can take $\lambda >
\max(2\|D^2g_0\|,1+k\|g_0\|_\infty + \|Dg_0\|^2_\infty)$.
Also, for $x\in\del \Omega$ and $t> 0$,
\[
	v^+(x,t) = g_0(x) + \lambda (e^{\lambda t} - 1) > g_0(x) + t \|g_t\|_\infty \geq g(x,t),
\]
if $\lambda$ is large, for example $\lambda > \sqrt{\|g_t\|_{\infty}}$. Clearly, $v^+(x,0) =
g(x,0)$, therefore, by the classical comparison (see for example \cite{LSU}),
\[
	f(x,t) \leq v^+(x,t) = g_0(x) + \lambda(e^{\lambda t} - 1),
\]
for every $(x,t) \in Q$. Similarly, with
\[
	v^- = g_0(x) - \lambda(e^{\lambda t} - 1)
\]
we obtain the symmetric inequality, and hence
\begin{equation}\label{E:f_lip_est}
	|f(x,t) - g_0(x)| \leq K_0 t
\end{equation}
for $t\in[0,T]$ where $K_0$ is a constant depending only on the stated norms of $g$ (as it is, it also depends
on $T$, but we can take this inequality on a bounded interval $[0,\theta]$ and then iterate it).

Step 2. Assume now $g_0$ is only continuous and let $\omega_0$ be its modulus of continuity. Let us fix a point
$x_0 \in \Omega$ and $0 < \rho < \min(\dist(x_0,\del \Omega),2\sqrt{\|g\|_\infty})$. Let us also define
\[
	g^{\pm}(x,t) = g_0(x_0) \pm \omega_0(\rho) \pm \frac{2\|g\|_\infty}{\rho^2}|x-x_0|^2.
\]
It is easy to see that $g^- \leq g \leq g^+$ on $\Gamma$ and thus, again from the comparison principle,
$f^- \leq f \leq f^+$, where $f^\pm$ is the solution of \eqref{E:appmu} with initial and boundary condition
$g^\pm$. Since $g^\pm$ are in $C^2(\Rd\times\R)$, we can use estimate \eqref{E:f_lip_est} to conclude that
\[
	|f^\pm(x_0,t) - g_0^\pm(x_0)| \leq K^+_0 t
\]
where $K^+_0$ depends on $\|g\|_\infty$ and $\rho$. Therefore,
\[\begin{aligned}
	|f&(x_0,t) - g_0(x_0)| \\
	& \leq |f(x_0,t) - f^\pm(x_0,t)| + |f^\pm(x_0,t) - g^\pm(x_0,0)| + | g^\pm(x_0,0) - g_0(x_0)| \\
	& \leq \oo{2} |f^+(x_0,t) - f^-(x_0,t)| + K^+_0 t + \omega_0(\rho) \\
	& \leq \oo{2} |f^+(x_0,t) - g^+(x_0,0)| + \oo{2} |f^-(x_0,t) - g^-(x_0,0)| \\
	& \qquad \qquad + \oo{2} |g^+(x_0,0) - g^-(x_0,0)| + K^+_0 t + \omega_0(\rho)\\
	& \leq 2 K^+_0 t + \frac{3}{2} \omega_0(\rho).
\end{aligned}\]
With this inequality it is straightforward to conclude the proof.
\end{proof}

The full Lipschitz estimate in time now follows easily.

\begin{theorem}\label{T:lip_t}
If $f$ is a solution if \eqref{E:appmu} in $Q$ and $g \in C^2(\overline{Q})$, then there exists $K_2 > 0$
depending only on $\|D^2g\|_\infty$, $\|Dg\|_\infty$, $\|g\|_\infty$ and $\|g_t\|_\infty$ such that
\[
	|f(x,t) - f(x,s)| \leq K_2 |t - s|
\]
for every $x\in \Omega$, $t,s\in (0,T)$. If $g$ is merely continuous, we can estimate the modulus of continuity of
$f$ on $Q$ in terms of $\|g\|_\infty$ and the modulus of continuity of $g$.
\end{theorem}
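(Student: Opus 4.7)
The plan is to obtain the full time-Lipschitz estimate via a barrier-and-comparison argument that exploits the time-autonomy of \eqref{E:appmu}. Fix $h>0$ small and set $w(x,t)\bydef f(x,t+h)$. Since the equation has no explicit $t$-dependence, both $f$ and $w$ are smooth solutions of \eqref{E:appmu} on $\Omega\times[0,T-h]$. Their parabolic boundary data differ by at most $Kh$, where $K\bydef\max(K_1,\|g_t\|_\infty)$: at $t=0$, Theorem~\ref{T:lip_t0} gives $|f(x,h)-g(x,0)|\leq K_1 h$, and on $\del\Omega\times[0,T-h]$, $|g(x,t+h)-g(x,t)|\leq\|g_t\|_\infty h$.

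To propagate this $Kh$-sized gap inward, I would introduce the barriers $V^\pm(x,t)\bydef f(x,t)\pm Kh\,E(t)$ with $E(0)=1$. For $h$ small enough one has $V^\pm\geq c/2$, so $\beta_c(V^\pm)=V^\pm$ on the relevant range, and using that $f$ itself solves \eqref{E:appmu} the supersolution computation collapses to
\[
V^+_t-\mathcal{L}^{\eps,\delta}V^+-|DV^+|^2\ =\ Kh\left[E'(t)\,-\,kE(t)\,\frac{\langle D^2f\cdot Df,Df\rangle}{|Df|^2+\delta^2}\right],
\]
and symmetrically for $V^-$. Choosing $E(t)=e^{kM_0 t}$, where $M_0$ is any uniform upper bound on the absolute value of the Rayleigh quotient on $Q$, makes $V^+$ a classical supersolution and $V^-$ a classical subsolution of \eqref{E:appmu}. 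Since $V^-\leq w\leq V^+$ on the parabolic boundary, the classical comparison principle for the uniformly parabolic approximate equation (cf.\ Theorem~\ref{T:comparison_pos}) gives $V^-\leq w\leq V^+$ throughout $\Omega\times[0,T-h]$, hence $|f(x,t+h)-f(x,t)|\leq Kh\,e^{kM_0 T}\bydef K_2 h$; iteration over short intervals covers any $|t-s|$.

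The main obstacle is producing an $\eps,\delta$-independent bound $M_0$ on the Rayleigh quotient. Using the PDE in the strictly positive regime the quotient equals $(f_t-\eps\lap f-|Df|^2)/(kf)$, so $M_0$ is controlled once $\|f_t\|_\infty$, $\eps\|D^2f\|_\infty$ and $\|Df\|^2_\infty$ are bounded on $Q$ in terms of the $C^2$ and Lipschitz-in-$t$ norms of $g$. On $\Gamma$ these bounds follow directly from $g$ (using the PDE itself to bound $f_t$ at $t=0$, noting $\eps|\lap g_0|\leq\|D^2g_0\|_\infty$ for $\eps\leq1$), and they are propagated into $Q$ by the maximum principle applied to the linearization of \eqref{E:appmu}, which is uniformly parabolic thanks to the $\eps\lap$ term; the potentially large zero-order coefficient (again of Rayleigh-quotient type) is tamed by a standard exponential-weight trick together with the Lipschitz-in-$x$ estimate established in parallel.

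For merely continuous $g$, one approximates $g$ by a family of $C^2$ functions, applies the estimate above, and passes to the limit. The constants depend continuously on the moduli of continuity of $g$ and on $\|g\|_\infty$, so the limit yields the modulus-of-continuity version of the statement on all of $Q$.
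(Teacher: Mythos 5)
Your opening move (time shift $w(x,t)=f(x,t+h)$, with the $O(h)$ gap on the parabolic boundary coming from Theorem~\ref{T:lip_t0} at the bottom and from $\|g_t\|_\infty h$ on the lateral side) is exactly the paper's starting point; the paper then concludes by applying the barrier/comparison argument of Theorem~\ref{T:lip_t0} to the shifted solution $\hat f(x,t)=f(x,t+\tau)$, so it never needs any interior information on $D^2f$. The genuine gap in your version is the propagation step. Your barriers $V^\pm=f\pm Kh\,e^{kM_0t}$ are super/subsolutions only if $M_0$ bounds $|\langle D^2f\cdot Df,Df\rangle|/(|Df|^2+\delta^2)$ on all of $Q$, \emph{uniformly in $\eps$ and $\delta$}, and this bound is never established. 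Rewriting the quotient through the PDE as $(f_t-\eps\lap f-|Df|^2)/(k\beta_c(f))$ only exposes the circularity: a uniform interior bound on $\|f_t\|_{L^\infty(Q)}$ depending only on the norms of $g$ is precisely the content of Theorem~\ref{T:lip_t}; $\eps\|\lap f\|_{L^\infty(Q)}$ in the interior is not controlled by the data anywhere in the theory; and the uniform spatial Lipschitz estimate is only proved for the $\eps=0$ equation \eqref{E:app0mu} (Theorem~\ref{T:lip_x}), while for $\eps>0$ only the H\"older bound of Theorem~\ref{T:holder_x} is available. The proposed rescue (maximum principle for the linearization plus an exponential weight) does not close the loop: the zero-order coefficient of the linearized equation is, up to $\beta_c'$, the same Rayleigh-quotient quantity, so the Gronwall factor you obtain is $e^{CT}$ with $C$ controlled only by $\sup_Q|D^2 f^{\eps,\delta}|$, i.e.\ it depends on $\eps$, $\delta$ and the solution itself, not only on $\|D^2g\|_\infty$, $\|Dg\|_\infty$, $\|g\|_\infty$, $\|g_t\|_\infty$. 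Since the whole point of Theorem~\ref{T:lip_t} (see Step~1 of the proof of Theorem~\ref{T:exist_pos}) is that $K_2$ is independent of $\eps$ and $\delta$, this is a real gap and not a technicality.

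Two smaller points. First, $\beta_c(V^-)=V^-$ requires $V^-\geq c$, not $V^-\geq c/2$ ($\beta_c(z)=|z|$ only for $|z|\geq c$); this is harmless if you invoke the Lipschitz constant of $\beta_c$, but as written the identity is unjustified. Second, for merely continuous $g$ the phrase ``constants depend continuously on the moduli'' is not enough: your $K_2$ blows up with $\|D^2g\|_\infty$ under smoothing, so the limit gives nothing. The paper obtains the modulus-of-continuity statement by sandwiching $g$ between explicit smooth paraboloid-type barriers and balancing the two error terms quantitatively, as in Step~2 of the proof of Theorem~\ref{T:lip_t0} and Step~5 of Theorem~\ref{T:lip_bdry}; you would need that device, not a soft approximation argument.
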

\begin{proof}
Taking $\tau >0$ and
\[
	\hat{f}(x,t) \bydef f(x, t+\tau),
\]
using the lemma it is immediate to get
\[
	|f(x,t) - \hat{f}(x,t)| \leq K_2 t
\]
in $\Omega\times [0,T-\tau]$. The case when $g$ is only continuous is done in a similar fashion to the previous proof.
\end{proof}

\subsection{H\"older continuity in space}

\begin{theorem}\label{T:holder_bdry}
Let $f$ be the strictly positive solution of \eqref{E:appmu} with $g\in C^2(Q)\cap \mathrm{Lip}(\overline{Q})$, $g \geq c >0$.
There exist
$\alpha\in(0,1)$, and constants $K_3 > 1$ and $\rho_3$, depending only on $\alpha$, $\|g\|_\infty$,
$\|Dg\|_\infty$, $\|g_t\|_\infty$ and $\alpha$, such that
for every $\eps$ and $\delta$ sufficiently small and for every $P_0 = (x_0,t_0) \in \Gamma$ and
$x\in \Omega$ with $|x-x_0| \leq \rho_3$ we have
\[
	|f(x,t_0) - g(x_0,t_0)| \leq K_3 |x-x_0|^\alpha.
\]
\end{theorem}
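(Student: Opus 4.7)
The result is a uniform-in-$(\eps,\delta)$ boundary H\"older estimate. The plan is a radial barrier argument at each boundary point. If $P_0=(x_0,t_0)\in\Gamma$ has $x_0\in\Omega$ then $t_0=0$ and $f(\cdot,0)=g(\cdot,0)$ is already Lipschitz by hypothesis, so assume $x_0\in\partial\Omega$. On the cylinder $N=(B_{\rho_3}(x_0)\cap\Omega)\times[(t_0-\tau)_+,t_0]$ I would compare $f$ with
\[
\psi^\pm(x,t) = g(x_0,t_0) \pm \bigl[K_3|x-x_0|^\alpha + M_1(t_0-t)\bigr],
\]
with positive constants $K_3,M_1,\rho_3,\tau$ to be chosen, and conclude $|f(x,t_0)-g(x_0,t_0)|\leq K_3|x-x_0|^\alpha$ at the top slice. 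The concave profile $r^\alpha$ with $\alpha<1$ is dictated by the quadratic source $|Du|^2$: since $\lapi(r^\alpha) = \alpha(\alpha-1)r^{\alpha-2}<0$, the $ku\lapi u$ piece of \eqref{E:appmu} yields a negative term of order $r^{\alpha-2}$ that can dominate the positive $|D\psi^+|^2\sim r^{2\alpha-2}$ for small $r$.

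A direct calculation in the regime $|D\psi^\pm|^2\gg\delta^2$ (which holds on $N$ once $\delta\ll K_3\alpha\rho_3^{\alpha-1}$) gives
\[
\psi^+_t - \mathcal{L}^{\eps,\delta}\psi^+ - |D\psi^+|^2 = -M_1 + K_3\alpha r^{\alpha-2}\bigl[k\beta_c(\psi^+)(1-\alpha) - \eps(d+\alpha-2) - K_3\alpha r^\alpha\bigr],
\]
with an analogous identity for $\psi^-$. Comparison with constants gives $c\leq f\leq\|g\|_\infty$ uniformly in $\eps,\delta$, and we have $\beta_c(\psi^+)\geq c$ and $\beta_c(\psi^-)\geq c/2$ by construction, so the supersolution property of $\psi^+$ (and symmetrically the subsolution property of $\psi^-$) in $N$ holds provided
\[
\eps(d+\alpha-2)\leq \tfrac{kc(1-\alpha)}{2},\qquad K_3\alpha\rho_3^\alpha\leq \tfrac{kc(1-\alpha)}{4},\qquad M_1\leq \tfrac{K_3\alpha kc(1-\alpha)}{4}\rho_3^{\alpha-2}.
\]
On $\del_p N$ the inequality $\psi^-\leq f\leq\psi^+$ further requires: on the inner sphere $\partial B_{\rho_3}(x_0)\cap\Omega$, $K_3\rho_3^\alpha\geq\|g\|_\infty-c$ (from the $L^\infty$-bound on $f$); on the lateral piece $\partial\Omega\cap B_{\rho_3}(x_0)\times[(t_0-\tau)_+,t_0]$, $K_3\geq L\rho_3^{1-\alpha}$ and $M_1\geq L$ (from $f=g$ and the Lipschitz hypothesis with constant $L$); on the bottom, either $M_1\tau\geq\|g\|_\infty-c$ if $t_0\geq\tau$, or one runs $N$ down to $t=0$ and uses the Lipschitz continuity of $g(\cdot,0)$ directly when $t_0<\tau$. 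The classical comparison principle for the quasilinear uniformly parabolic equation \eqref{E:appmu} (nondegenerate thanks to $f\geq c$ and ellipticity $\geq\eps>0$) then gives $\psi^-\leq f\leq \psi^+$ in $N$.

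The main obstacle is the conflict between the two constraints on the single product $K_3\rho_3^\alpha$: the lower bound $\|g\|_\infty-c$ needed for $\psi^+$ to dominate $f$ on the inner sphere, and the upper bound $kc(1-\alpha)/(4\alpha)$ needed to keep the stabilizing $\lapi$ term from being overpowered by $|Du|^2$. These are simultaneously feasible precisely when
\[
\alpha \leq \frac{kc}{4(\|g\|_\infty-c)+kc},
\]
and it is this inequality that fixes the H\"older exponent in terms of $k$, $c$ and $\|g\|_\infty$. The radius $\rho_3$ must then be taken small enough that the remaining Lipschitz compatibility $L\leq K_3\alpha\rho_3^{\alpha-2}kc(1-\alpha)/4$ (automatic since $\rho_3^{\alpha-2}\to\infty$) and the bottom-slice condition are met, after which $K_3$ is determined by $K_3\rho_3^\alpha\approx\|g\|_\infty-c$ and $K_3\geq L\rho_3^{1-\alpha}$. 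All choices are uniform in $\eps,\delta$ below thresholds depending only on $k,c,d,\alpha$.
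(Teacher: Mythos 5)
Your proposal follows the paper's proof essentially verbatim: the same radial power barrier $g(P_0)\pm\bigl(K|x-x_0|^\alpha+\lambda(t_0-t)\bigr)$ on a small backward cylinder at a boundary point, the same sign analysis (concavity of $r^\alpha$ makes the degenerate-diffusion term produce an $O(r^{\alpha-2})$ term that beats $|D\psi|^2=O(r^{2\alpha-2})$ for $r\leq\rho_3<1$), and the same reduction to a finite list of inequalities on $K$, $\rho_3$, $\alpha$, $\lambda$ plus smallness of $\eps$ and $\delta$, closed by the classical comparison principle for the uniformly parabolic regularized equation. Your explicit isolation of the tension between $K_3\rho_3^\alpha\gtrsim\|g\|_\infty$ (boundary dominance) and $\alpha K_3\rho_3^\alpha\lesssim kc$ (interior supersolution), yielding $\alpha\lesssim kc/(\|g\|_\infty+kc)$, is a nice sharpening of what the paper leaves implicit in its choice of constants; just remember to also record a smallness condition on $\delta$ (e.g.\ $\delta\leq\alpha$, as the paper does) in the final list, since your displayed identity for $\psi^+_t-\mathcal{L}^{\eps,\delta}\psi^+-|D\psi^+|^2$ drops the factor $|D\psi^+|^2/(|D\psi^+|^2+\delta^2)$ and is therefore only an inequality up to that factor.
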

\begin{proof}
Step 1. Let us define
\[
	v^+(x,t) = g(x_0,t_0) + K_*|x-x_0|^\alpha + \lambda\left( t_0-t \right),
\]
where $K^* \geq 1$ and $\lambda >0$ are constants which we will choose in such a way as to make $v^+$
a supersolution lying above $f$ on the appropriate domain. Let us take $x \in \Omega\cap B_{\rho_3}(x_0)$
and $t \in (0,t_0)$. An easy computation, using the fact that $|x-x_0| \leq \rho_3 \leq 1$, yields
\[\begin{aligned}
	v^+_t &- \mathcal{L}^{\eps,\delta} v^+ - |Dv^+|^2 = -\lambda
		- K_*^2 \alpha^2 |x-x_0|^{2\alpha-2} \\
	&\qquad\quad + K_*\alpha |x-x_0|^{\alpha -2} \left( \frac{(1-\alpha) kv^+(x,t)}{1 + \left(
		\delta |x-x_0|^{1-\alpha}/K_*\alpha\right)^2} - \eps(d+\alpha-2) \right) \\
	&\geq -\lambda + \frac{K_*\alpha}{|x-x_0|^{2-\alpha}}\left( \frac{(1-\alpha) kc}{1 +
		\left(\delta/K_*\alpha\right)^2} - \eps(d+\alpha-2) - K_*\alpha|x-x_0|^\alpha \right).
\end{aligned}\]
Now note that if $\delta \leq \alpha < 1$ then $1 + \left(\delta/K_*\alpha\right)^2 \leq 2$; furthermore,
we have $\eps(d+\alpha-2) \leq (1-\alpha)kc/4$ for $\eps \leq \frac{(1-\alpha)kc}{4(d-1)}$, the
inequality being trivial if $d=1$. We can also choose $\rho_3 < 1$ such that
\[
	K_*\alpha|x-x_0|^\alpha \leq \frac{(1-\alpha)kc}{8}
\]
for $|x-x_0|\leq\rho_3$. Indeed, this is the case if we require that
\[
	\alpha K_* \rho_3^\alpha \leq (1-\alpha)kc/8.
\]
Hence, for $\delta$ and $\eps$ in the specified range and with this choice
for $\rho_3$, $K_*$ and $\alpha$, we have
\[
	v^+_t - \mathcal{L}^{\eps,\delta} v^+ - |Dv^+|^2 \geq -\lambda
		+ K_*\alpha\rho_3^{\alpha-2} \frac{(1-\alpha)kc}{8} > 0
\]
provided
\[
	K_*\alpha\rho_3^{\alpha-2} > \frac{8\lambda}{(1-\alpha)kc}.
\]

Step 2. We want to prove that $v^+ > f$ on $Q^* = (\Omega\cap B_{\rho_3}) \times (t_0 - t_*,t_0)$, where
we take $t_* \bydef \min\{1,t_0\}$. Let $P = (x,t) \in \del_p Q^*$. Let us first assume $P$ is on
the lateral boundary of $Q^*$. If $x\in \del \Omega$, then, since $f=g$ on $\del \Omega$ and $\rho_3 <1$,
\[\begin{aligned}
	f(P) &\leq f(P_0) + \|Dg\|_{\infty} |x-x_0| + \|g_t\|_{\infty} (t_0 -t) \\
	& \leq g(P_0) + K_*|x-x_0|^\alpha + \lambda(t_0 - t) = v^+(P),
\end{aligned}\]
provided $K_* \geq \|Dg\|_{\infty}\quad\text{and}\quad \lambda \geq \|g_t\|_{\infty}$.
If, on the other hand, $x\in \Omega\cap\del B_{\rho_3}(x_0)$, then, using comparison,
\[
	f(P) \leq \|g\|_{\infty} \leq f(P_0) + K_*\rho_3 + \lambda(t_0 - t)
		\leq v^+(P),
\]
provided $K_* \geq \|g\|_\infty / \rho_3$.

Step 3. We have to consider the case when $P$ is on the bottom
of the cylinder $Q^*$. Let us first assume $x\in \Omega\cap B_{\rho_3}(x_0)$ and $t = t_0 -1$. In this
case, again using comparison, we get
\[
	f(P) \leq \|g\|_{\infty} \leq f(P_0) + K_*|x-x_0|^\alpha + \lambda = v^+(P),
\]
as long as $\lambda \geq \|g\|_{\infty}$. Finally, when $t_0 <1$, and hence
$Q^* = (\Omega\cap B_{\rho_3}) \times (0,t_0)$, we have that $f = g$ on the bottom, therefore
\[\begin{aligned}
	f(P) &= f(x,0) = g(x,0) \leq g(x_0,t_0) + \|Dg\|_{\infty} |x-x_0| + \|g_t\|_{\infty} t_0 \\
	&\leq g(P_0) + K_* |x-x_0|^\alpha +\lambda t_0 = v^+(P),
\end{aligned}\]
provided, once again, $K_* \geq \|Dg\|_{\infty}$ and $\lambda \geq \|g_t\|_{\infty}$.

Step 4. To summarize, we have $v^+ \geq f$ on $\del_p Q^*$, and hence, by comparison, $v^+ \geq f$ in
$Q^*$, if
\[\begin{aligned}
	K_*\alpha\rho_3^\alpha &\leq \frac{(1-\alpha)kc}{8}, \\
	K_*\alpha\rho_3^{\alpha-2} &> \frac{8\lambda}{(1-\alpha)kc} \\
	\lambda &\geq \max\left\{\|g_t\|_\infty,\,\|g\|_\infty \right\}, \\
	K_* &\geq \max\left\{ \|Dg\|_\infty, \, \frac{\|g\|_\infty}{\rho_3} \right\}, \\
	\delta &\leq \alpha \quad \text{and} \quad \eps \leq \frac{(1-\alpha)kc}{4(d-1)}.
\end{aligned}\]
This can be achieved, for example, taking
\[ 
	\lambda = \max\left\{\|g_t\|_\infty,\,\|g\|_\infty \right\}, \;
	\rho_3 < \min\left\{ \frac{\|g\|_\infty}{\|Dg\|_\infty}, \frac{kc}{16}\right\},\;
	\alpha = \min\left\{\oo{2},\,\frac{kc}{16\|g\|_\infty}\sqrt{\rho_3}\right\}
	\;\text{and}\; K_* = \|g\|_\infty\rho_3^{-1}.
\]
Therefore, we have
\[
	f(x,t_0) - g(x_0,t_0) \leq v^+(x,t_0) - g(P_0) = K_*|x-x_0|^\alpha.
\]
Using the barrier $v^-\bydef g(P_0) - K_*|x-x_0|^\alpha + \lambda(t - t_0)$ we get the reverse
inequality,
\[
	f(x,t_0) - g(x_0,t_0) \geq - K_*|x-x_0|^\alpha.
\]
\end{proof}

We can extend the estimate to the interior of the domain. We will use the following notation for convenience.
For $z\in \Rd$, we define $\Omega_z = z + \Omega = \{x+z\mid x\in \Omega \}$ and for $r>0$, $\Omega_r = \{ x \in \Omega \mid \dist(x,
\del \Omega) \geq r \}$. Given $x,y\in\Rd$ we define the closed segment $[x,y] \bydef \{ \theta y + (1-\theta) x
\mid 0\leq \theta \leq 1 \}$. The semi-open and open segments $[x,y)$, $(x,y]$ and $(x,y)$ are defined
analogously.

\begin{theorem}\label{T:holder_x}
The conclusion of Theorem~\ref{T:holder_bdry} is valid in the interior of $Q$, that is, there exists
$K_4$, depending only on $\|g\|_{\infty}$, $\|Dg\|_{\infty}$ and $\|g_t\|_{\infty}$, such that for
$\eps$ and $\delta$ sufficiently small and for every $x,y\in \Omega$
\[
	|f(x,t) - f(y,t)| \leq K_4|x-y|^ \alpha.
\]
\end{theorem}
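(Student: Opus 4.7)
The plan is to combine the translation invariance of \eqref{E:appmu} with the boundary H\"older estimate of Theorem~\ref{T:holder_bdry}, reducing the interior estimate to a comparison argument on the intersection of two shifted copies of $\Omega$.

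Fix $z\in\Rd$ with $|z|\leq\rho_3$, set $\tilde f(x,t)\bydef f(x+z,t)$, and write $D_z\bydef\Omega\cap\Omega_{-z}$. By translation invariance of \eqref{E:appmu}, both $f$ and $\tilde f$ are smooth solutions of the same equation on $D_z\times[0,T]$. The goal is to show $|f(x,t)-\tilde f(x,t)|\leq K|z|^\alpha$ on $D_z\times[0,T]$, from which the theorem follows by choosing $z=y-x$; the complementary range $|x-y|>\rho_3$ is handled by the trivial estimate $|f(x,t)-f(y,t)|\leq 2\|g\|_\infty\leq 2\|g\|_\infty\rho_3^{-\alpha}|x-y|^\alpha$.

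First I would control $|f-\tilde f|$ on the parabolic boundary of $D_z\times[0,T]$. On the bottom face, both values coincide with $g$ at $t=0$, giving $|f(x,0)-\tilde f(x,0)|=|g(x,0)-g(x+z,0)|\leq\|Dg\|_\infty|z|\leq\|Dg\|_\infty|z|^\alpha$. Every lateral point $x\in\partial D_z$ belongs to $\partial\Omega$ or to $\partial\Omega_{-z}$; in the first case $f(x,t)=g(x,t)$ and Theorem~\ref{T:holder_bdry} applied at the boundary point $(x,t)$ to the interior point $x+z$ (distance $|z|\leq\rho_3$) gives $|\tilde f(x,t)-g(x,t)|=|f(x+z,t)-g(x,t)|\leq K_3|z|^\alpha$, and the second case is symmetric. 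Hence $|f-\tilde f|\leq\max\{K_3,\|Dg\|_\infty\}|z|^\alpha$ on $\partial_p(D_z\times[0,T])$.

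The main step is to propagate this boundary bound into the interior of $D_z$. For each fixed $\eps,\delta>0$ the equation \eqref{E:appmu} is quasilinear and uniformly parabolic, so subtracting the equations satisfied by $f$ and $\tilde f$ shows that $w\bydef f-\tilde f$ solves a linear parabolic equation $w_t=a_{ij}D_{ij}w+b_iD_iw+cw$ with bounded coefficients and principal part dominated below by $\eps\Delta$. The classical maximum principle (already invoked in the proof of Theorem~\ref{T:lip_t0}) then yields $\|w\|_{L^\infty(D_z\times[0,T])}\leq K|z|^\alpha$. The main obstacle I anticipate is the $\eps,\delta$-uniformity of this last step: the coefficients of the linearization depend on $D^2 f$ and are therefore not controlled as $\eps\to 0$, so ensuring that the resulting $K$ depends only on $\|g\|_\infty,\|Dg\|_\infty,\|g_t\|_\infty$ will require either a careful inspection of the zeroth-order sign constant entering the maximum principle, or, more robustly, bypassing the linearization altogether by constructing direct barriers of the form $\tilde f(x,t)+K|z|^\alpha+\lambda(t_0-t)$ in the spirit of the proof of Theorem~\ref{T:holder_bdry}.
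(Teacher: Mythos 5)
Your approach is essentially the same as the paper's: translate by $z$, use the boundary estimate of Theorem~\ref{T:holder_bdry} to bound $|f-f_z|$ on $\partial_p\bigl((\Omega\cap\Omega_{\pm z})\times[0,T]\bigr)$, then propagate into the interior by comparison, with the trivial estimate when $|x-y|>\rho_3$. In fact you state the conclusion of the translation step more cleanly than the paper does: once one has $|f-f_z|\leq K|z|^\alpha$ on $\Omega\cap\Omega_z$, taking $z=x-y$ (only $x,y\in\Omega$ is needed for $x\in\Omega\cap\Omega_z$) already gives $|f(x,t)-f(y,t)|\leq K|x-y|^\alpha$ for all $x,y\in\Omega$ with $|x-y|\leq\rho_3$; the paper instead records a weaker conclusion in its Step 1 (requiring both $x$ and $y$ to lie in $\Omega_{x-y}\cap\Omega$) and then repairs it in Steps 2 and 3 by splitting the segment $[x,y]$ at its midpoint or at boundary crossings. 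So you have, if anything, a more economical version of the same argument.

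The one place where you deviate from a complete proof is exactly the place where the paper is terse: the propagation step. The paper simply writes ``using the comparison principle we have that $f_z(x)-K_3|z|^\alpha\leq f(x)\leq f_z(x)+K_3|z|^\alpha$,'' and your worry about $\eps,\delta$-uniformity is directed at precisely this sentence. Note that your proposed fix of constructing a barrier of the form $\tilde f+K|z|^\alpha+\lambda(t_0-t)$ does not obviously close the gap: adding a constant $C>0$ to a solution of \eqref{E:appmu} produces the extra term $-kC\,\langle D^2\tilde f\,D\tilde f,D\tilde f\rangle/(|D\tilde f|^2+\delta^2)$ when testing the supersolution inequality, whose sign is not controlled, so the barrier need not be a supersolution --- this is the same phenomenon you identified in the linearization, just in disguise. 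So the two remedies you sketch (careful inspection of the zeroth-order coefficient, or a direct barrier) collapse to a single outstanding point, which the paper also leaves implicit. Apart from that, your reduction to the translate-and-compare scheme is correct and matches the paper's.
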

\begin{proof}
Step 1. Take a vector $z\in B_{\rho_3}(0)$ and let $V = \Omega \cap \Omega_z$. Let us define
$f_z(x) \bydef f(x-z)$. From Theorem~\ref{T:holder_bdry} we have that $|f(x)-f_z(x)| \leq K_3|z|^\alpha$ on
$\del V$ ($x\in \del V$ implies that $x\in \del \Omega$ or $x-z\in \del \Omega$). Hence, using the comparison
principle we have that $f_z(x) - K_3|z|^\alpha \leq f(x) \leq f_z(x) + K_3|z|^\alpha$ for $x\in V$. This means that whenever $x,y\in
\Omega_{x-y}\cap  \Omega$ or $x,y\in \Omega_{y-x} \cap \Omega$, with $|x-y| \leq \rho_3$, we have $|f(x) - f(y)| \leq K_3 |x-y|^\alpha$.
In particular, the same is true whenever $x,y\in \Omega_{|x-y|}$.

Step 2. When $|x-y| > \rho_3$, using the comparison principle we obtain the conclusion of the theorem taking
$K_4 = 2\|g\|_\infty/\rho_3^\alpha$. Let us therefore assume that $|x-y| \leq \rho_3$ and  $x-y\notin \Omega_{|x-y|}$.
Let us first further assume that $[x,y] \subset \Omega$. In this case we can take the two segments $[x,w]$ and
$[w,y]$, where $w = (x+y)/2$ is the midpoint of $[x,y]$, let $z = y-w$ and note that $w,y\in \Omega_z \cap \Omega$ and
$x,w\in \Omega_{-z}\cap \Omega$. Hence, from the first step of this proof, we have
\[\begin{aligned}
	|f(x) - f(y)| &\leq |f(x) - f(w)| + |f(w) - f(y)| \leq K_3(|x-w|^\alpha + |w-y|^\alpha) \\
	&\leq 2^{1-\alpha}K_3 |x-y|^\alpha.
\end{aligned}\]

Step 3. If the segment $[x,y]$ is not completely in $\Omega$, then we can certainly find $w_1, w_2 \in \del \Omega \cap [x,y]$
(not necessarily different) such that $[x,w_1) \in \Omega$ and $(w_2,y] \in \Omega$. In this case we can apply
Theorem~\ref{T:holder_bdry} directly to get $|f(x) - f(w_1)| \leq K_3 |x-w_1|^\alpha$ and $|f(w_2) - f(y)| \leq
K_3 |w_2-y|^\alpha$. Since $|f(w_1) - f(w_2)| \leq \|Dg\||w_1 - w_2| \leq K_3|w_1 - w_2|^\alpha$, we easily get
the result with $K_4 = 4^{1-\alpha} K_3$. This finishes the proof.
\end{proof}

\subsection{Lipschitz estimate in space}

Observe that, even though the function $(x,t) \mapsto K_* |x-x_0| + \lambda (t_0 -t)$ is a
viscosity supersolution of \eqref{E:mu} when $x_0 \in \del \Omega$---on the backward cylinder, and for
appropriate choices of the constants $K_*$ and $\lambda$---it is the $\eps\lap u$ term that prevents
this function from being a viscosity supersolution of \eqref{E:appmu}, as it gives rise to a bad
term with the wrong sign. It is therefore necessary to let $\eps \to 0$ to obtain the Lipschitz
estimate. We start by obtaining a Lipschitz estimate on the boundary.

\begin{theorem}\label{T:lip_bdry}
Let $g \in \mathrm{Lip}(\overline{Q})$, $g \geq c > 0$, and suppose $f$ is a (strictly positive) solution of
\begin{equation}\label{E:app0mu}
\begin{cases}
	f_t = \mathcal{L}^{0,\delta} f + |Df|^2  & \text{in $Q$}, \\
	f(P) = g(P) & \text{on $\Gamma$}.
\end{cases}
\end{equation}
There exist constants $K_5$ and $\rho_5$, depending only on $\|g\|_{\infty}$, $\|Dg\|_{\infty}$ and
$\|g_t\|_{\infty}$ (independent of $\delta\in(0,1)$), such that for every $P_0 = (x_0,t_0) \in \del
\Omega\times (0,T)$ and $x\in \Omega\cap B_{\rho_5}(x_0)$ we have
\[
	|f(x,t_0) - g(x_0,t_0)| \leq K_5 |x-x_0|.
\]
Moreover, if $g$ is only continuous, then the modulus of continuity of $f$ can be estimated in terms of
$\|g\|_{\infty}$ and the modulus of continuity of $g$.
\end{theorem}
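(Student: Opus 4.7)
The plan is to construct a linear barrier at each boundary point, modelled on the H\"older barrier of Theorem~\ref{T:holder_bdry} but now with exponent $\alpha=1$, and then invoke comparison for the regularized equation \eqref{E:app0mu}. Fix $P_0=(x_0,t_0)\in\partial\Omega\times(0,T)$ and work on a small backward cylinder $Q^\ast = (\Omega\cap B_{\rho_5}(x_0))\times(t_0-t_\ast,t_0)$, for suitably chosen $\rho_5,t_\ast$, with a barrier of the form
\[
	v^+(x,t) \bydef g(x_0,t_0) + K_5\,|x-x_0| + \Lambda\,(t-t_0),
\]
the constants $K_5,\Lambda>0$ to be tuned along the way. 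Note that $v^+$ is smooth on the whole of $Q^\ast$ because the only non-smooth point of $x\mapsto|x-x_0|$ is $x_0\in\partial\Omega$, which lies outside $\Omega$.

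The key observation is that for a radial-linear profile the degenerate diffusion part of $\mathcal{L}^{0,\delta}$ vanishes identically. Indeed, a direct computation with $Dv^+=K_5(x-x_0)/|x-x_0|$ and $D^2 v^+=(K_5/|x-x_0|)(I-\hat r\otimes\hat r)$, where $\hat r=(x-x_0)/|x-x_0|$, gives $D^2v^+\cdot Dv^+=0$ and hence $\langle D^2v^+\cdot Dv^+, Dv^+\rangle=0$. Consequently $\mathcal{L}^{0,\delta}v^+\equiv 0$ on $Q^\ast$, and the supersolution inequality for \eqref{E:app0mu} collapses to the purely first-order condition $v^+_t\geq|Dv^+|^2=K_5^{\,2}$, solved by taking $\Lambda\geq K_5^{\,2}$. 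This is precisely the cancellation that fails for \eqref{E:appmu} with $\eps>0$: the singular term $\eps\Delta v^+ = \eps(d-1)K_5/|x-x_0|$ has the wrong sign at the apex $x_0$ and cannot be absorbed. This is the structural reason why the Lipschitz estimate becomes available only after passing $\eps\to 0$.

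It remains to verify $v^+\geq f$ on $\partial_p Q^\ast$ and conclude via comparison. On the lateral piece $\partial\Omega\cap B_{\rho_5}(x_0)\times[t_0-t_\ast,t_0]$, where $f=g$, the Lipschitz bound $|g(x,t)-g(x_0,t_0)|\leq\|Dg\|_\infty|x-x_0|+\|g_t\|_\infty|t-t_0|$ is dominated by $K_5|x-x_0|+\Lambda|t-t_0|$ as soon as $K_5\geq\|Dg\|_\infty$ and $\Lambda\geq\|g_t\|_\infty$. On the spherical part $\Omega\cap\partial B_{\rho_5}(x_0)\times[t_0-t_\ast,t_0]$ and on the bottom $\{t=t_0-t_\ast\}$, the crude $L^\infty$ bound $f\leq\|g\|_\infty$ (furnished by the corollary to Theorem~\ref{T:comparison_pos}) is dominated by $v^+$ once $K_5\rho_5$ is chosen sufficiently large compared to $\|g\|_\infty$ and $t_\ast$ is chosen sufficiently small in terms of these constants. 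The comparison principle then yields $v^+\geq f$ throughout $Q^\ast$, and evaluating at $t=t_0$ gives the required bound $f(x,t_0)-g(x_0,t_0)\leq K_5|x-x_0|$; the symmetric subsolution $v^-(x,t)=g(x_0,t_0)-K_5|x-x_0|-\Lambda(t-t_0)$ yields the matching lower bound.

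For the statement with merely continuous $g$, I would sandwich $g$ between Lipschitz data $g^{\pm,n}$ whose Lipschitz norms control the modulus of continuity of $g$, apply the preceding estimate to the corresponding solutions $f^{\pm,n}$, and pass to the limit using the ordering provided by comparison, exactly as in Step~2 of the proof of Theorem~\ref{T:lip_t0}; this produces the desired modulus of continuity of $f$ at the boundary in terms of $\|g\|_\infty$ and the modulus of continuity of $g$. The principal technical obstacle throughout is the simultaneous balancing of $K_5,\Lambda,\rho_5,t_\ast$ so that $v^+$ is a supersolution, dominates the $L^\infty$ ceiling $\|g\|_\infty$ on the non-$\partial\Omega$ parts of $\partial_p Q^\ast$, and reduces to the Lipschitz profile $g(x_0,t_0)+K_5|x-x_0|$ at $t=t_0$; this balance becomes feasible only once the singular $\eps\Delta$ term has been removed from the equation.
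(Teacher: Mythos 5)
Your barrier $v^+(x,t)=g(P_0)+K_5|x-x_0|+\Lambda(t-t_0)$ has a fatal sign problem, and your correct observation that $\mathcal{L}^{0,\delta}v^+\equiv 0$ for a purely radial\emph{-linear} profile is precisely what makes it unsalvageable without a further ingredient. As you note, since the degenerate diffusion term annihilates the linear cone, the supersolution inequality on $Q^\ast$ reduces to $v^+_t\geq |Dv^+|^2 = K_5^2$, forcing $v^+$ to \emph{increase} in time (i.e.\ $\Lambda>0$ with the sign you wrote). But then on the bottom $\{t=t_0-t_\ast\}$ of the backward cylinder you get $v^+=g(P_0)+K_5|x-x_0|-\Lambda t_\ast$, and near $x_0$ this is strictly below $g(P_0)\leq\|g\|_\infty$ while $f$ can only be bounded there by $\|g\|_\infty$ (or by the data $g$ when $t_\ast=t_0$, which is no better: that would require $-\Lambda\geq\|g_t\|_\infty$). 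Notice also that your verification on the lateral boundary silently swaps the sign, writing ``dominated by $K_5|x-x_0|+\Lambda|t-t_0|$,'' but for $t<t_0$ your $v^+$ contributes $-\Lambda|t-t_0|$, not $+\Lambda|t-t_0|$. Flipping the sign to $\Lambda(t_0-t)$ repairs all the boundary checks, but then $v^+_t=-\Lambda<0$ and, with $\mathcal{L}^{0,\delta}v^+=0$, the supersolution inequality $-\Lambda-K_5^2\geq 0$ is impossible.

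The missing idea, which the paper uses, is a concave quadratic correction: $v^+=g(P_0)+L_*|x-x_0|-K_*|x-x_0|^2+\lambda(t_0-t)+\theta$. The term $-K_*|x-x_0|^2$ makes $\langle D^2v^+\cdot Dv^+,Dv^+\rangle = -2K_*(L_*-2K_*r)^2<0$, so the degenerate diffusion now contributes a \emph{strictly positive} amount $+\,kcK_*/\bigl(1+(\delta/(L_*-2K_*r))^2\bigr)$ to $-\mathcal{L}^{0,\delta}v^+$, scalable by $K_*$ to dominate both the $-\lambda$ coming from a time-\emph{decreasing} barrier and the $|Dv^+|^2$ term (uniformly in $\delta\in(0,1)$). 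With this sign of the time term the boundary verifications go through exactly as you sketched. The diagnosis that removing $\eps\Delta$ is needed is correct but incomplete: even with $\eps=0$, the pure cone gives you \emph{no help} from the diffusion, and it is the quadratic correction that activates $\mathcal{L}^{0,\delta}$ with the useful sign. The rest of your outline---using $f\leq\|g\|_\infty$ on the spherical and bottom pieces, the symmetric subsolution $v^-$, sandwiching continuous $g$ between smooth data to deduce the modulus-of-continuity statement---does match the paper's proof once the barrier is fixed.
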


\begin{proof}
Step 1. Let $K_*$, $L_*$, $\theta$ and $\lambda$ be positive constants and define
\[
	v^+(x,t) = g(P_0) + L_*|x-x_0| - K_*|x-x_0|^2 + \lambda(t_0 - t) + \theta.
\]
We will check $v^+$ is a viscosity strict supersolution on $Q^*$ ($Q^*$ defined as in the proof of
Theorem~\ref{T:holder_bdry}) above $g$ for an appropriate choice of constants $L_*$, $K_*$ and $\lambda$.
Another easy computation gives
\[\begin{aligned}
	v^+_t - &\mathcal{L}^{0,\delta} v^+ - |Dv^+|^2 \\
	&\geq -\lambda + \frac{2kcK_*}{1 + \left( \frac{\delta}{L_* - 2K_*|x-x_0|}\right)^2}
		- |L_* - 2K_*|x-x_0||^2 \\
	&\geq -\lambda + kcK_* - (L_*-1)^2 > 0,
\end{aligned}\]
provided $|x-x_0| \leq 1/2K_*$, $L_* \geq 2$ and $ K_* > ((L_*-1)^2 + \lambda)/(kc)$.

Step 2. We now need to choose the constants so that $v^+ > f$ on the parabolic boundary of $Q^*$. Take $P = (x,t)$
be a point in $\Gamma^* = \del_p Q^*$. If $x\in\del \Omega$, as before
\[\begin{aligned}
	f(P) &= g(P) \leq g(P_0) + \|Dg\|_{\infty} |x-x_0| + \|g_t\|_{\infty}(t_0-t) \\
	&\leq g(P_0) + (L_*-1)|x-x_0| + \lambda(t_0-t) < v^+(P),
\end{aligned}\]
provided $L_* \geq \|Dg\|_{\infty} + 1$ and $|x-x_0| \leq 1/2K_*$. If $x\in \Omega\cap \del B_{\rho_5}(x_0)$, then
\[
	f(P) \leq \|g\|_{\infty} \leq g(P_0) + (L_* -1)|x-x_0| + \lambda(t_0-t) \leq v^+(P),
\]
provided $L_* \geq \|g\|_{\infty} + 1$ and yet again $|x-x_0| \leq 1/2K_*$.

Step 3. When $P = (x,t)$ is on the bottom of the cylinder $Q^*$, as before we consider two cases. When $t_0 \geq 1$,
$t = t_0-1$ and
\[
	f(P) \leq \|g\|_{\infty} \leq g(P_0) + \lambda < v^+(P)
\]
as long as $\lambda \geq \|g_t\|_{\infty}$, $L_* \geq 1$ and $|x-x_0| \leq 1/2K_*$. On the other hand, if
$t_0 <1$, $t=0$ and hence
\[
	f(p) = g(P) \leq \|g\|_{\infty} \leq g(P_0) + \lambda < v^+(P)
\]
under the exact same conditions as for the previous formula.

Step 4. Hence we have $f \leq v^+$ on $\Gamma^*$, and thus by comparison on $Q^*$, as long as we take
\[\begin{aligned}
	\lambda &\geq \|g_t\|_{\infty}, \\
	L_* &\geq \max\left\{ 2, \, \|Dg\|_{\infty}+1, \, \|g\|_{\infty}+1 \right\}, \\
	K_* &> \frac{(L_* - 1)^2 + \lambda}{kc}, \\
	\rho_5 &\leq \oo{2K_*}.
\end{aligned}\]
Therefore, using once more the comparison principle, we have that for $x\in \Omega\cap B_{\rho_5}(x_0)$,
\[
	f(x,t_0) \leq v^+(x,t_0) \leq g(P_0) + L_*|x-x_0| + \theta.
\]
Since $\theta$ is arbitrary, we have
\[
	f(x,t_0) \leq g(P_0) + L_*|x-x_0|.
\]
Using instead the barriers
\[
	v^-(x,t) = g(P_0) - L_*|x-x_0| + K_*|x-x_0|^2 + \lambda(t-t_0) - \theta
\]
we obtain the reverse inequality, and as a consequence the Lipschitz estimate.

Step 5. Let us finally merely assume that $g$ is continuous and let $\omega_g(\sigma)$ be a modulus of continuity
at $P_0$. More specifically, let $\omega_g$ be a continuous, decreasing function in $\sigma$ such that
$|g(P) - g(P_0)| \leq \omega_g(\sigma)$ whenever $\max\{{|x-x_0|},\, |t-t_0|\} \leq \sigma$. Let $\sigma \in (0,t_0)$
and define the smooth functions
\[
	g^{\pm}(x,t) \bydef g(x_0,0) \pm \omega_g(\sigma) \pm \frac{4\|g\|_{\infty}}{\sigma^2} |x-x_0|^2
		\pm \frac{2\|g\|_{\infty}}{\sigma} |t - t_0|.
\]
If $\max\{|x-x_0|,\, |t-t_0|\} \leq \sigma$, then
\[
	g^{-} (P) \leq g(P_0) - \omega_g(\sigma) \leq g(P) \leq g(P_0) + \omega_g(\sigma) \leq g^+(P),
\]
and if $\max\{|x-x_0|,\, |t-t_0|\} \geq \sigma$ then
\[
	g^-(P) \leq -\|g\|_{\infty} \leq g(P) \leq \|g\|_{\infty} \leq g^+(P).
\]
Therefore, if $f^\pm$ are the solutions of \eqref{E:app0mu} with initial data $g^\pm$, by comparison
$f^- \leq f \leq f^+$ on $Q$. Since $f^\pm$ are smooth we can apply the first part of the theorem to deduce that
\[
	|f^\pm(x,t_0) - g^\pm(P_0)| \leq K_5^+ |x-x_0|,
\]
where $K_5^+$ depends on $\|g\|_{\infty}$ and $\sigma$. From these inequalities we get
\[\begin{aligned}
	|f(x,t_0) &- g(P_0)| \\
	&\leq |f(x,t_0) - f^\pm(x,t_0)| + |f^\pm(x,t_0) - g^\pm(P_0)| + |g^\pm(P_0) - g(P_0)| \\
	&\leq \oo{2}|f^+(x,t_0) - f^-(x,t_0)| + K_5^+|x-x_0| + \omega_g(\sigma) \\
	&\leq \oo{2}|f^+(x,t_0) - g^+(P_0)| + \oo{2}|f^-(x,t_0) - g^-(P_0)| \\
		&\qquad \oo{2}|g^+(P_0) - g^-(P_0)| + K_5^+|x-x_0| + \omega_g(\sigma) \\
	&\leq 2K_5^+|x-x_0| + \frac{3}{2}\omega_g(\sigma).
\end{aligned}\]
This finishes the proof.
\end{proof}

It is now straightforward to obtain the interior Lipschitz estimate.
\begin{theorem}\label{T:lip_x}
Let $g$ and $f$ be as in Theorem~\ref{T:lip_bdry}. For every $x,y\in \Omega$ and $t\in(0,T)$
\[
	|f(x,t) - f(y,t)| \leq K_5|x-y|,
\]
where $K_5$ is the constant given in that theorem.
If $g$ is only continuous, then the modulus of continuity of $x\mapsto f(x,t)$ can be estimated in terms
of $\|g\|_\infty$ and the modulus of continuity of $g$ in $x$.
\end{theorem}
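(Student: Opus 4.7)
My plan is to mirror the proof of Theorem~\ref{T:holder_x}, replacing the H\"older boundary estimate by the linear estimate of Theorem~\ref{T:lip_bdry}. The main tool is a translate-and-compare argument: for $z$ with $|z|\leq \rho_5$, define $f_z(x):=f(x-z)$, which solves the same regularized equation \eqref{E:app0mu} on $\Omega_z$. On the boundary of the overlap $V:=\Omega\cap\Omega_z$, every point $x$ satisfies either $x\in\partial\Omega$ or $x-z\in\partial\Omega$, so one of $f(x,t)$, $f(x-z,t)$ equals the boundary datum at a point within distance $|z|$ of the other argument. Applying Theorem~\ref{T:lip_bdry} at those boundary points together with the Lipschitz bound on $g$ gives $|f(x,t)-f_z(x,t)|\leq K_5|z|$ on $\partial V$. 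The classical comparison principle for \eqref{E:app0mu} (a uniformly parabolic smooth equation once $\delta>0$) then propagates this inequality into $V$.

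To convert this translate bound into the pointwise Lipschitz statement I would split into cases according to where $[x,y]$ sits. When $|x-y|\leq\rho_5$ and both $x,y$ lie in $\Omega_{|x-y|}$, taking $z=y-x$ gives $|f(x,t)-f(y,t)|\leq K_5|x-y|$ directly. When only $[x,y]\subset\Omega$ but an endpoint is close to $\partial\Omega$, I would use the midpoint $w=(x+y)/2$, apply the translate estimate to the pairs $(x,w)$ and $(w,y)$ (each of which satisfies the required containment), and sum via the triangle inequality, exactly as in Step~2 of the proof of Theorem~\ref{T:holder_x}. When $[x,y]$ crosses $\partial\Omega$, I pick $w_1,w_2\in\partial\Omega\cap[x,y]$ with $[x,w_1)\subset\Omega$ and $(w_2,y]\subset\Omega$, apply Theorem~\ref{T:lip_bdry} at $w_1$ and $w_2$ on the two end segments, and use $|f(w_1,t)-f(w_2,t)|=|g(w_1,t)-g(w_2,t)|\leq\|Dg\|_\infty|w_1-w_2|$ on the middle piece. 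The far-apart case $|x-y|>\rho_5$ is trivial from $|f(x,t)-f(y,t)|\leq 2\|g\|_\infty\leq (2\|g\|_\infty/\rho_5)|x-y|$, which can be absorbed into the final constant.

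For merely continuous $g$, the same decomposition applies, with the bound $K_5|x-y|$ replaced by the modulus-of-continuity estimate of Theorem~\ref{T:lip_bdry} on the end segments and by the modulus of $g$ itself on the piece $[w_1,w_2]$. The essential point throughout is that Theorem~\ref{T:lip_bdry} provides constants independent of $\delta\in(0,1)$, so the interior estimates are uniform in $\delta$ and survive the passage $\delta\to 0$ needed to return to \eqref{E:mu}. The only delicate step is the geometric bookkeeping for segments that graze $\partial\Omega$, and this is exactly the bookkeeping already carried out in the proof of Theorem~\ref{T:holder_x}; no genuinely new difficulty appears beyond ensuring that the final constant can be taken as $K_5$ (up to enlarging it once, if needed, to absorb the factors picked up in the splitting and far-apart steps).
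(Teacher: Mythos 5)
Your core translate-and-compare step coincides with Step 1 of the paper's proof, and for pairs with $|x-y|\leq\rho_5$ your splitting (midpoint when $[x,y]\subset\Omega$, splitting at the two boundary crossings otherwise) does deliver the bound $K_5|x-y|$, because the collinear pieces have lengths adding up to $|x-y|$ and $\|Dg\|_\infty\leq K_5$. The genuine gap is the case $|x-y|>\rho_5$: the theorem asserts the Lipschitz constant is exactly the $K_5$ of Theorem~\ref{T:lip_bdry}, while you fall back on the crude bound $2\|g\|_\infty|x-y|/\rho_5$ and propose to enlarge the constant. The paper avoids this loss: in its Step 2 it chains finitely many short collinear steps along $[x,y]\subset\Omega$ (each step small enough for the translate estimate), and since the step lengths sum to $|x-y|$ the constant $K_5$ survives for arbitrarily distant pairs; in its Step 3 it inserts auxiliary points $w_i$ within distance $\rho_5$ of the boundary crossings $x_i$, uses the interior (chained) estimate on $[x,w_1]$ and $[w_2,y]$, the boundary estimate only across the short pieces $[w_i,x_i]$, and the Lipschitz bound of $g$ between the two boundary points. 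Note also that your plan applies Theorem~\ref{T:lip_bdry} directly on the end segments $[x,w_1]$ and $[w_2,y]$, which is legitimate only when these are shorter than $\rho_5$; this is consistent with your case split, but it is precisely the discarded long-distance case where it would fail. The issue is not cosmetic: as constructed in Theorem~\ref{T:lip_bdry}, $\rho_5\leq 1/(2K_*)$ with $K_*>((L_*-1)^2+\lambda)/(kc)$, so $\rho_5$ degenerates as the positivity bound $c\to 0$, whereas $K_5=L_*$ does not; the paper later needs the $K_5$-estimate uniformly along the approximations $g+\oo{n}$ to conclude that the maximal solution is Lipschitz, and a constant containing $1/\rho_5$ would not survive that limit. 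The repair is exactly the chaining (or an iterated bisection along the segment, which also preserves the constant since lengths add).

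A minor point: \eqref{E:app0mu} is not uniformly parabolic for $\delta>0$ (its diffusion matrix is rank one), and since $\beta_c(f)$ multiplies a second-order term of no fixed sign, the step ``$|f-f_z|\leq K_5|z|$ on the parabolic boundary of $V\times(0,T)$ implies the same inside'' requires the comparison arguments the paper actually relies on for this degenerate equation, not the classical uniformly parabolic one; your parenthetical justification should be adjusted accordingly, though the fact you use is the same one the paper uses.
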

\begin{proof}
Step 1. The proof is very similar to the proof of Theorem~\ref{T:holder_x}, but in this case it is easy to get
the optimal Lipschitz constant. Take $z\in\Rd$ such that $|z| \leq \rho_5$. Define $V = \Omega \cap \Omega_z$ and let
$f_z(x) \bydef f(x-z)$. From previous theorem we know that $|f(x)-f_z(x)| \leq K_5 |z|$ on $\del V$. Using
comparison, we have that $f_z(x) - K_5|z| \leq f(x) \leq f_z(x) + K_5|z|$ in $V$. Therefore,
$|f(x)-f(y)| \leq K_5|x-y|$ if $x,y\in \Omega_{x-y}$, and in particular the same is true if $x,y \in \Omega_{|x-y|}$.
$\Omega_r = \{ x\in \Omega \mid \dist(x,\del \Omega) >r\}$.

Step 2. Suppose now $x-y\notin \Omega_{|x-y|}$ and let us first assume that the whole segment $[x,y] = \{ z\in \Rd \mid z
= \theta y + (1-\theta) x, \; 0\leq\theta\leq 1 \}$ is in $\Omega$. Let us assume without loss of generality that
$\rho = \dist(x,\del \Omega) \leq \dist(y, \del \Omega)$. We can find points $x_i$, $0\leq i \leq n$ such that $x = x_0$,
$x_n = y$, $x_i \in [x_{i-1},x_{i+1}]$ ($1\leq i \leq n-1$), and $\rho_i = |x_i - x_{i-1}| \leq \rho$
($1\leq i \leq n$). Noting that $x_i,x_{i-1}\in \Omega_{x_i-x_{i-1}}$ we can use the previous step to conclude that
$|f(x_i) - f(x_i)| \leq K_5|x_i - x_{i-1}|$, and hence
\[
	|f(x) - f(y)| \leq \sum_{i=1}^n |f(x_i) - f(x_{i-1})| \leq K_5 |x-y|.
\]

Step 3. If, on the other hand $[x,y] \notin \Omega$, then we can find points $x_1, x_2 \in \del \Omega \cap [x,y]$ such that
$[x,x_1) \subset \Omega$ and $[x_2,y]\setminus\{x_2\} \subset \Omega$. We can further choose $w_1\in [x,x_1]$, with
$|w_1 - x_1| \leq \rho_5$ and $w_2 \in [x_2,y]$, with $|w_2 - x_2| \leq \rho_5$. Then we apply step 2 above to
obtain $|f(x) - f(w_1)| \leq K_5 |x-w_1|$, $|f(w_2) - f(y)| \leq |w_2-y|$, while from the previous theorem,
$|f(w_i) - f(x_i)| \leq K_5 |w_i - x_i|$. Putting all these inequalities together gives the Lipschitz estimate
for this last case.

The proof of the statement with the modulus of continuity follows as in the proof of
Theorem~\ref{T:holder_x}.
\end{proof}

We finally prove Theorem~\ref{T:exist_pos}. Existence can be proved piecing out the results in theorems
\ref{T:lip_t}, \ref{T:holder_x} and \ref{T:lip_x}, and using the standard compactness arguments. Uniqueness
follows directly from the strong comparison result, Theorem~\ref{T:comparison_pos_strong}.

\begin{proof}[Proof of Theorem~\ref{T:exist_pos}]
Step 1. Assume first $g\in C^2(Q)\cap \mathrm{Lip}(\overline{Q})$. The comparison principle and theorems~\ref{T:lip_t}
and~\ref{T:holder_x} imply that the family of functions $\{f^{\eps,\delta}\}$ is uniformly bounded and
equicontinuous, therefore, for some sequence $\eps_k \to 0$, $f^{\eps_k,\delta} \to f^\delta$, which, by a
standard argument of viscosity solutions (see the proof of Theorem~\ref{T:exist_maximal} above for a similar argument), is a solution of \eqref{E:app0mu}.

Using now theorems~\ref{T:lip_t} and~\ref{T:lip_x} (note that the estimate in Theorem~\ref{T:lip_t} is
independent of $\eps$ and $\delta$), we can use the same compactness argument to find a sequence $f^{\delta_k}
\to f$. The usual arguments for viscosity solutions work here to show that viscosity subsolution of \eqref{E:mu}
and satisfies condition $(i)$ of Definition~\ref{D:visc_supsol} (we make the details explicit in the similar
argument for Theorem~\ref{T:exist_maximal} below). The second condition is trivial because our
functions are strictly positive.

Step 2. Let us suppose that $v$ is a classical moving free-boundary subsolution of \eqref{E:mu} which is strictly
separated from $f$ at $t_1$ and on the portion of the lateral boundary of $Q$ with times between $t_1$ and $t_2$,
$\Gamma_{t_1,t_2}$ (see Definition~\ref{D:visc_supsol}). Suppose that $v$ crosses $f$ for the first time at an
interior point of $Q_{t_1,t_2} = Q \cap \Rd\times[t_1,t_2)$, $P_* = (x_*,t_*)$. Since $f(P_*) > 0$, we get a
contradiction from the comparison result, Theorem~\ref{T:comparison_pos}, applied to a sufficiently small
cylinder around $P_*$, choosing $c$ appropriately.

Step 3. Since the estimates for the Lipschitz constant (or modulus of continuity) of the approximation $f^\delta$ is
independent of $\delta$, and the convergence is locally uniform, the statements of the theorem concerning
Lipschitz continuity (respectively modulus of continuity) readily follow.
\end{proof}

\section{The Dirichlet problem in a bounded domain. Large time behaviour}
\label{sec.asymp.bdd}

If $\Omega$ is bounded but not a ball, or the data are not radially symmetric, then the exact asymptotic behaviour is not the same as in the standard (PME). We treat the general situation here, where
 $\Omega\subset\Rd$ is bounded and
\begin{equation}\label{E:g_Diri}
	g_0(x,t) = \begin{cases}
		0 & \text{if $x\in\del\Omega$}, \\
		u_0(x) & \text{if $t = 0$},
	\end{cases}
\end{equation}
and $u_0$ is a Lipschitz continuous function, positive in $\Omega$ and vanishing on $\del \Omega$. The Dirichlet problem in $Q=\Omega\times [0,T]$ is
\begin{equation}\label{E:DP}
	\begin{cases}
		u &\;\text{solves \eqref{E:mu} in $Q$ and} \\
		u = g_0 &\;\text{on $\Gamma = \del Q \times[0,T] \cup Q\times\{t=0\}$.}
	\end{cases}
\end{equation}
Here is a first result on asymptotic behaviour.

\begin{prop}\label{T:asymptotic_1}
Let $u$ be the maximal solution of \eqref{E:DP}. Then $u$ decays like $O(t^{-1})$. More precisely, we have
\[
	[F_1(x)]^{m-1} \leq t\, u(x,t) \leq [F_2(x)]^{m-1}
\]
where $F_i(x)$ is the profile given by \eqref{E:sep_F1} for two different radii $R_i$, $i=1,2$.
\end{prop}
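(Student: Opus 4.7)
The plan is to sandwich the maximal solution $u$ between two classical separation-of-variables solutions of the kind constructed in Section~\ref{S:special_solutions}, whose profiles are supported on balls strictly inside and strictly containing $\Omega$ respectively. Recall that $\rho(x,t)=T(t)F(x)$ with $T(t)=[C+(m-1)\lambda t]^{-1/(m-1)}$ and $\lapi F^m+\lambda F=0$ is a solution of \eqref{E:ipme}, so in the pressure variable $V(x,t)=\frac{m}{m-1}(T(t)F(x))^{m-1}$ is a classical solution of \eqref{E:mu} which decays exactly like $t^{-1}$ and whose $t\cdot V(x,t)$ limit is a constant multiple of $F(x)^{m-1}$. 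The point is that this constant is absorbed into the definition of $F_i$ in \eqref{E:sep_F1}, which is parametrized by the support radius $R_i$.

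For the upper bound, I would pick a ball $B_{R_2}\supset\overline{\Omega}$ and take the corresponding profile $F_{R_2}$, which is strictly positive on $\overline{\Omega}$. Choose the constant $C_2$ small enough that $V_2(\cdot,0)\geq u_0$ on $\Omega$; this is possible since $u_0$ is bounded and $F_{R_2}$ is bounded away from zero on $\overline{\Omega}$. To push this inequality to the maximal solution, I would work on an arbitrary finite time interval $[0,t^*]$: since $V_2>0$ on $\partial\Omega\times[0,t^*]$, choose $n$ so large that $V_2\geq g+\tfrac1n$ on $\Gamma$; the strong comparison result Theorem~\ref{T:comparison_pos_strong} then gives $u^n\leq V_2$ on $\Omega\times[0,t^*]$, and passing to the limit $n\to\infty$ via the monotone approximation in Theorem~\ref{T:exist_maximal} yields $u\leq V_2$ on $\Omega\times[0,t^*]$. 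Since $t^*$ was arbitrary, multiplying by $t$ and letting $t\to\infty$ produces the right-hand inequality after absorbing the resulting prefactor into the normalization of $F_2$.

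For the lower bound I would take a ball $B_{R_1}$ with $\overline{B_{R_1}}\subset\Omega$, the corresponding profile $F_{R_1}$ (extended by zero outside $B_{R_1}$), and the separable pressure solution $V_1(x,t)=\tfrac{m}{m-1}(T_1(t)F_{R_1}(x))^{m-1}$. This is a classical moving free-boundary solution of \eqref{E:mu} whose support is identically $\overline{B_{R_1}}$. Using that $u_0$ is continuous and strictly positive on the compact set $\overline{B_{R_1}}\subset\Omega$, I choose $C_1$ large (so $T_1(0)$ is small) to guarantee the strict separation $V_1(\cdot,0)\prec u(\cdot,0)$. Since the support of $V_1(\cdot,t)$ stays inside $\Omega$ for all $t$, condition $V_1<u$ on the lateral boundary holds trivially, so the maximal-solution property (Definition~\ref{D:visc_supsol}(ii), reinforced by the theorem stated just after Theorem~\ref{T:independent_approx}) implies $V_1\leq u$ throughout $Q$. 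Multiplying by $t$ and sending $t\to\infty$ yields the left-hand inequality after the same renormalization.

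The main obstacle is the interplay between vanishing boundary data and the need to compare with strictly positive barriers: for the upper bound the separable barrier $V_2$ is positive on $\partial\Omega$ while $u$ vanishes there, so one cannot apply Theorem~\ref{T:comparison_pos_strong} directly to $u$ and must argue on the approximants $u^n$ on each compact time interval; for the lower bound the barrier $V_1$ and $u$ both vanish near $\partial\Omega$, and one must verify that strict separation of $V_1$ from $u$ at $t=0$ (inside the open support of $V_1$) together with the geometric fact $\mathrm{supp}\,V_1\subset\Omega$ is exactly what the definition of viscosity supersolution requires for the comparison to propagate in time. Once these two comparison steps are installed, the rest is a bookkeeping computation on the explicit separable profiles.
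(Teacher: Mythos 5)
Your proposal is correct and follows the same route as the paper: the paper's own (very terse) proof also sandwiches the maximal solution between two separable profiles $u_1,u_2$ of type \eqref{E:stythos_u} supported on balls $B_1\Subset\Omega\subset B_2$, chosen so that $u_1<u_0<u_2$ at $t=0$, and then invokes comparison. Your version is more careful about how comparison is actually applied (working with the approximants $u^n$ for the upper bound and with Definition~\ref{D:visc_supsol}(ii) for the lower bound), which is indeed what the paper glosses over with ``The theorem follows from the comparison result.'' One subtlety worth flagging, present equally in the paper's one-line argument: the separable barrier $V_1$ has infinite gradient at its fixed free boundary $\{|x|=R_1\}$ (the paper notes this explicitly after \eqref{E:stythos_u}), so it does not literally satisfy conditions (ii)--(iv) of Definition~\ref{D:cfbs}; the clean fix is to apply Definition~\ref{D:visc_supsol}(ii) to $\max\{V_1-\delta,0\}$, which is a genuine classical moving free-boundary strict subsolution because $\lapi V_1<0$ (this follows from $V_{1,t}<0$, $V_1>0$ and the equation), and then let $\delta\to0$.
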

\begin{proof}
Take $x_0\in\Omega$ and let $B_1=B_{R_1}(x_0)$ and $B_2=B_{R_2}(x_0)$ be such that $B_1 \Subset \Omega \subset B_2$. Let
$u_1$ and $u_2$ be the solutions given by \eqref{E:stythos_u} with $t_0 = t_i$ and $R = R_i$, $i=1,2$, respectively, and
choosing $t_i<0$ such that $u_1 < u_0 < u_2$ at $t=0$. The theorem follows from the comparison result.
\end{proof}
\begin{rmk}
If $\Omega$ is a ball, then we can successively take the balls $B_1$ and $B_2$ sandwiching $\Omega$ and obtain the exact
asymptotic behaviour $u(x,t) \sim t^{-1}[F(x)]^{m-1}$, where $F$ is the profile for the ball $\Omega$ given in
\eqref{E:sep_F1}.
\end{rmk}

In order to get a more precise asymptotic behaviour we need to
 solve the  nonlinear elliptic problem
\begin{equation}\label{E:eevp}
	-\lapi F^m = \lambda F
\end{equation}
in $\Omega$. We will obtain the result backwards, that is, we will prove that there is an asymptotic profile and then
show that it solves the elliptic problem \eqref{E:eevp} with null boundary conditions.

In order to prove that in fact there is a profile, we need the following lemma which is adapted from a similar result
for the (PME) of B\'enilan-Crandall \cite{BC} (see \cite[Lemma~8.1]{V} for a simple proof for the (PME)).
\begin{lemma}
The maximal viscosity solution of the Dirichlet problem \eqref{E:DP}, satisfies
\begin{equation}\label{E:ut}
	u_t \geq -\frac{u}{t}.
\end{equation}
\end{lemma}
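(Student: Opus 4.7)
The plan is to exploit the scaling invariance of~\eqref{E:mu}. A direct computation using the $1$-homogeneity of $\lapi$ shows that whenever $u$ is a viscosity solution, so is the rescaling
\[
	u_\lambda(x,t) \bydef \lambda\, u(x,\lambda t), \qquad \lambda>0,
\]
on the appropriately rescaled cylinder: the three terms $\partial_t u_\lambda$, $u_\lambda\lapi u_\lambda$, and $|Du_\lambda|^2$ all scale by the common factor $\lambda^2$, so the PDE transfers.

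Fix $\lambda\in(0,1)$. Then $u_\lambda(\cdot,0)=\lambda u_0\leq u_0$ and $u_\lambda\equiv 0\equiv u$ on $\del\Omega\times[0,T]$, so the parabolic data of $u_\lambda$ is dominated by that of $u$. The first key step is to identify $u_\lambda$ with the \emph{maximal} solution of the Dirichlet problem with data $\lambda g_0$. Letting $u^n$ denote the strictly positive solution with data $g_0+1/n$ used in the proof of Theorem~\ref{T:exist_maximal}, the function $v^n(x,t)\bydef\lambda\,u^n(x,\lambda t)$ is the unique strictly positive solution of \eqref{E:mu} with boundary datum $\lambda/n$ on the lateral portion of $\Gamma$ and $\lambda u_0+\lambda/n$ at $t=0$. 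The sequence $\{v^n\}$ is decreasing, positive, continuous, and Lipschitz on the vertical portion of $\Gamma$ (by the Lipschitz regularity of $u_0$), and it converges to $\lambda g_0$; hence Theorem~\ref{T:independent_approx} gives $\lim_n v^n=u_\lambda$, which is therefore the maximal solution with data $\lambda g_0$. Since $\lambda g_0\leq g_0$, the ordering of maximal solutions yields
\[
	\lambda\,u(x,\lambda t)=u_\lambda(x,t)\leq u(x,t) \qquad\text{in } Q.
\]

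Fixing $x$ and setting $s=\lambda t$, this reads $s\,u(x,s)\leq t\,u(x,t)$ whenever $0<s\leq t$, so $t\mapsto t\,u(x,t)$ is nondecreasing in $t$. Wherever $u$ is differentiable (and, more generally, in the distributional/viscosity sense), this monotonicity is equivalent to $u+t\,u_t\geq 0$, which is exactly~\eqref{E:ut}. The main obstacle is the identification step, since scaling does not commute with the canonical approximation $g\mapsto g+1/n$ used to construct the maximal solution; Theorem~\ref{T:independent_approx} is tailored precisely to handle this by allowing any admissible decreasing approximating family, and once this is secured the rest of the argument is a soft manipulation of the ordering inequality.
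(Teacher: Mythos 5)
Your proposal is correct and follows essentially the same route as the paper's proof: both use the scaling $u_\lambda(x,t)=\lambda u(x,\lambda t)$, identify the rescaled function with the maximal solution via Theorem~\ref{T:independent_approx}, invoke the ordering of maximal solutions, and conclude by differentiating the resulting one-parameter inequality. The only cosmetic difference is that you take $\lambda\in(0,1)$ (obtaining $u_\lambda\leq u$ and hence monotonicity of $t\mapsto t\,u(x,t)$), while the paper takes $\lambda>1$ (obtaining $u_\lambda\geq u$ and differentiating at $\lambda=1^+$); these are equivalent.
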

\begin{proof}
Let us for convenience define for $\lambda \geq 1$ the operator $\mathcal{T}_\lambda:C(Q) \mapsto C(Q_\lambda)$,
$Q_\lambda = \Omega\times[0,\lambda T]$,
\[
	\mathcal{T}_\lambda[u](x,t) \bydef \lambda u(x,\lambda t).
\]
Consider the approximation $u^n$ from the proof of Theorem~\ref{T:exist_maximal}, Section~\ref{S:maximal_solutions}, and
define $u^n_{\lambda} = \mathcal{T}_\lambda[u^n]$. It is straightforward to check that $u^n_{\lambda}$ solves
\eqref{E:mu} with data $g^n_{\lambda} = \lambda g^n(x,\lambda t)$. Therefore, for $\lambda >1$, using comparison, we
see that $u^n_\lambda \geq u^n$ in $Q$. Hence, from Theorem~\ref{T:independent_approx} $u_\lambda = \lim_{n}
u^n_\lambda$ is the maximal solution of \eqref{E:DP}
with data $g_\lambda = \lambda g(x, \lambda t)$ and satisfies $u_\lambda \geq u$ in $Q$. In fact, we must have
$u_\lambda = \mathcal{T}_\lambda[u]$, since $u^n\to u$ implies $u^n_\lambda = \mathcal{T}_\lambda[u^n] \to
\mathcal{T}_\lambda[u]$.

Since $u_\lambda \geq u$ and these functions are Lipschitz (from Theorem~\ref{T:exist_pos}), we can compute
\[
	0 \leq \left.\frac{du_\lambda(x,t)}{d\lambda} \right|_{\lambda = 1^+} = u(x,t) + tu_t(x,t),
\]
which immediately gives \eqref{E:ut}
\end{proof}
We are now in position to establish the following:
\begin{proof}[Proof of Theorem~\ref{T:large_time_Dirichlet}]
The above estimates for $u$ and $u_t$ suggest the following rescaling:
\begin{equation}\label{E:v_rescale}
	v(x,\tau) = \mathcal{V}(u) (x,\tau) \bydef
		\left[ \alpha e^{(m-1)\tau} u\left(x, e^{(m-1)\tau}\right) \right]^{\oo{m-1}},
\end{equation}
where $\alpha = (m-1)^2/m$. The bounds $t^{-1} u(x,t) \leq C$, from
Theorem~\ref{T:asymptotic_1}, and \eqref{E:ut} in terms of $v$ are
\[
	v(x,\tau) \leq C \;\;\;\text{and}\;\;\; v_\tau \geq 0.
\]
It is not hard to check that in fact, at least formally, $v$ solves
\begin{equation}\label{E:v_eq}
 v_t = \lapi v^m + v.
\end{equation}
The estimates for $v$ now imply that there exists $G = G_\Omega$ such that
\[
	\lim_{\tau\to\infty} v(x,\tau) = G(x).
\]
The function $F_\Omega$ from the statement of the theorem is just $F_\Omega = \alpha^{-\oo{m-1}} G =
\alpha^{-\oo{m-1}} G_\Omega$.
\end{proof}
\begin{theorem}\label{T:exist_eevp}
The function $G_\Omega(x)$ from the previous proof is a positive viscosity solution of the eigenvalue problem
\eqref{E:eevp} in $\Omega$ with $\lambda = 1$ and null boundary condition.
\end{theorem}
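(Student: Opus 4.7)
The plan is to apply the stability of viscosity solutions to the parabolic equation $v_\tau = \lapi v^m + v$ satisfied by the rescaled function $v$ of \eqref{E:v_rescale}, and then read off the stationary problem from the fact that the limit $G$ is constant in $\tau$. First, I would verify (using the ``jumping between formulations'' flagged at the end of Section~\ref{S:viscosity_solutions}) that $v$ is a viscosity solution of \eqref{E:v_eq}: since $v = [\alpha e^{(m-1)\tau} u]^{1/(m-1)}$ is a smooth bijection of the positive cone, any smooth test function $\Phi$ touching $v$ at a point where $v>0$ pulls back to a test function touching $u$ in the same direction, and one checks by direct chain-rule computation that the pointwise inequalities in the viscosity sense for the two formulations are equivalent.

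Next I would upgrade pointwise convergence $v(x,\tau) \to G(x)$ to locally uniform convergence on $\Omega$. The monotonicity $v_\tau \geq 0$ coming from \eqref{E:ut}, together with the upper bound from Proposition~\ref{T:asymptotic_1}, gives pointwise convergence; each $v(\cdot,\tau)$ is continuous (in fact Lipschitz on compact subsets of $\Omega$, inherited from the estimates of Theorem~\ref{T:exist_pos}) and the limit $G$ is continuous, so Dini's theorem upgrades this to locally uniform convergence on compact subsets of $\Omega$.

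Now consider the time-translates $w_n(x,\tau) \bydef v(x, n+\tau)$. Each $w_n$ is a viscosity solution of \eqref{E:v_eq} on $\Omega \times (-n,\infty)$ and $w_n \to G$ locally uniformly on $\Omega \times \R$, where $G$ is regarded as a function of $(x,\tau)$ that is constant in $\tau$. The standard stability of viscosity solutions under locally uniform convergence then shows that $G$ is itself a viscosity solution of \eqref{E:v_eq}. To extract the stationary equation, observe that any elliptic test function $\psi(x)$ touching $G$ from above at $x_0\in\Omega$ gives rise to a parabolic test function $\phi(x,\tau) \bydef \psi(x) + (\tau-\tau_0)^2$ that touches $G$ from above at $(x_0,\tau_0)$ for any $\tau_0\in\R$; since $\phi_\tau(x_0,\tau_0) = 0$, the parabolic subsolution inequality $\phi_\tau \leq \lapi\phi^m + \phi$ collapses to $\lapi\psi^m(x_0) + \psi(x_0) \geq 0$, which is exactly the subsolution property of $G$ for $-\lapi G^m = G$. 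The supersolution side is symmetric, using $\phi(x,\tau) = \psi(x) - (\tau-\tau_0)^2$. Finally, $G = 0$ on $\del\Omega$ follows from $u=0$ there and continuity of the rescaling, while strict positivity of $G$ in $\Omega$ follows from the lower bound of Proposition~\ref{T:asymptotic_1}: for each $x\in\Omega$ pick a ball $B_1 \ni x$ with $B_1 \Subset \Omega$, so that $G(x) \geq \alpha^{1/(m-1)} F_1(x) > 0$.

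The main obstacle I anticipate is the careful handling of degenerate/singular points in the first step and, to a lesser extent, in the extraction step, where $D\phi$ or $D(\phi^m)$ may vanish and the relaxed testing criterion of Lemma~\ref{L:weak_sol_cond} must be invoked; however, because $G$ is strictly positive inside $\Omega$ we operate entirely in the smooth region of the change of variables, and this is a routine technical matter that does not interfere with the logical flow.
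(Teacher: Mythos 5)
Your proposal is correct in essentials but takes a genuinely different route from the paper, and it is worth spelling out the contrast.

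The paper never establishes that the rescaled function $v$ is a viscosity solution of the autonomous equation \eqref{E:v_eq} as a standalone fact, nor does it appeal to a stability theorem. Instead, given an elliptic test function $\phi$ touching $G$ from above, the paper adds a convex, coercive time-profile $\eta_\eps(\tau)=\eps\,\eta(\tau-1/\eps)$ to $\phi$; this forces the function $v-(\phi+\eta_\eps)$ to attain a maximum at a finite time $\tau_\eps$, and the monotonicity $v_\tau\geq 0$ drives $\tau_\eps\geq 1/\eps\to\infty$. The resulting test function for $v$ is then pulled back to a test function $\psi_\eps=\mathcal{V}^{-1}(\phi_\eps)$ for $u$, and the viscosity inequalities for $u$ are read off and passed to the limit $\eps\to 0$. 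The supersolution side is genuinely harder for the paper because $v\nearrow G$, so a test function touching $G$ from below need never touch $v$ at finite time; the paper fixes this with an extra calibrated parameter $\sigma_\eps$.

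Your route replaces all of this with three standard steps: (1) the change of variables transfers the viscosity structure from $u$ to $v$, so $v$ solves \eqref{E:v_eq} in the viscosity sense; (2) the time-translates $w_n(x,\tau)=v(x,n+\tau)$ converge locally uniformly to the time-independent $G$ and, by stability of viscosity solutions under locally uniform convergence (in the CGG/Juutinen--Kawohl sense for the singular operator), $G$ is a viscosity solution of the parabolic equation; (3) the time-dependence is stripped off by testing with $\psi(x)\pm(\tau-\tau_0)^2$. This is more modular and, notably, it symmetrizes the sub- and supersolution sides, sidestepping the $\sigma_\eps$ construction entirely; the price is that you must actually prove the transfer in step (1) (which is the content the paper folds inline into its Step~2) and you must invoke an external stability result for the modified viscosity notion. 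Step (2) also quietly relies on the $w_n$ being a family of \emph{genuine} viscosity solutions on overlapping cylinders and on the Lipschitz bounds being uniform in $\tau$; the paper asserts the latter too, so this is not a gap you introduced, but it is a point you should not gloss over. Your remark at the end that the degenerate case $D\psi(x_0)=0$ must be handled via Lemma~\ref{L:weak_sol_cond} is correct; the singular test at that point still yields $0\leq \Lambda(D^2\psi^m)(x_0)+\psi(x_0)$ (resp.\ $\geq$), which is the intended elliptic viscosity inequality, so the extraction does not collapse to a vacuous statement.

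Two small points to tighten in a final write-up. First, in the Dini argument you need to say \emph{why} $G$ is continuous before invoking Dini; the right statement is that $\{v(\cdot,\tau)\}_\tau$ is equi-Lipschitz on compacts (so $G$ is Lipschitz, hence continuous), and then either Dini or Arzel\`a--Ascoli upgrades monotone pointwise convergence to locally uniform convergence. Second, the stability step only needs the pointwise parts of the definitions (Definition~\ref{D:visc_subsol} and item $(i)$ of Definition~\ref{D:visc_supsol}); the free-boundary clause $(ii)$ plays no role here because $G>0$ in $\Omega$ and the target elliptic notion is the plain CGG one, so you should state that explicitly to avoid any suggestion you need stability for the free-boundary comparison clause.
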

\begin{proof}
Step 1. Let $\phi$ be a smooth function touching $G$ from above at a point $x_0\in\Omega$ and assume as usual that $G-\phi$
has an absolute maximum at $x_0$. Define the functions $\eta, \eta_\eps\in C^1(\R)$ by
\[ \eta(s) \bydef \begin{cases}
	s^2 & \text{if $|s|<1$,} \\
	2|s|-1 & \text{if $|s|\geq 1$},
	\end{cases} \quad\;\;\text{and}\;\;\quad
	\eta_\eps(s) \bydef \eps\eta\left(s-\oo{\eps}\right).
\]
Define also $\phi_\eps(x,\tau) \bydef \phi(x) + \eta_\eps(\tau)$ and
\[
	\psi_\eps(x,t) \bydef \mathcal{V}^{-1}(\phi_\eps)(x,t) =
		\oo{\alpha t} \phi_\eps\left(x,\log\left[t^\oo{m-1}\right]\right)^{m-1}.
\]
Now observe that $v(x,\tau) - \phi_\eps(x,\tau) \leq G(x) - \phi(x) \leq 0$, therefore, there exists $c_\eps \geq 0$
such that $v - (\phi_\eps - c_\eps)$ has a maximum $0$ at a point $(x_\eps,\tau_\eps)$. Since $v_\tau \geq 0$, it
must be that $\tau_\eps \geq 1/\eps$, as for $\tau < 1/\eps$, $\phi_{\eps,\tau}(x,\tau) < 0$. Noting that the
convergence of $v$ as $\tau\to\infty$ is monotone, the uniform Lipschitz estimates imply that $G$ is Lipschitz and
the convergence is uniform. Therefore, we must have $(x_\eps,\tau_\eps) \to (x_0,\infty)$ and $c_\eps \to 0$ as
$\eps\to 0$.

Step  2. The transformation $\mathcal{V}$ is monotone in the sense that, with $\tau = \log(t^{\oo{m-1}})$,
$\mathcal{V}(u_1)(x,\tau) < \mathcal{V}(u_2)(x,\tau)$ whenever $u_1(x,t) < u_2(x,t)$. Hence, the function $\psi_\eps$
must touch $u$ from above at the point $(x_\eps,t_\eps)$, $t_\eps = \exp\{(m-1)\tau_\eps\}$ and consequently, at this
point,
\[\begin{aligned}
	& \psi_{\eps,t} \leq k\psi_\eps\lapi\psi_\eps + |D\psi_\eps|^2 && \quad\text{if $D\psi_\eps\neq 0$}, \\
	& \psi_{\eps,t} \leq k\psi_\eps\Lambda(D^2\psi_\eps) && \quad\text{if $D\psi_\eps = 0$}.
\end{aligned}\]
This in terms of $\phi_\eps$ translates into
\[\begin{aligned}
	& \phi_{\eps,\tau} \leq \lapi \phi_\eps^m + \phi_\eps && \quad\text{if $D\phi_\eps \neq 0$}, \\
	& \phi_{\eps,\tau} - \phi_\eps \leq \Lambda\left(D^2 \phi_\eps^m\right) && \quad\text{if $D\phi_\eps = 0$}
\end{aligned}\]
at the point $(x_\eps,\tau_\eps)$. Here we used the fact that when $Dw = 0$, $D^2(w^l) = l w^{l-1} D^2 w$.
Now let $\eps\to 0$. Observing that $|\phi_{\eps,\tau}| = |\eps\eta'| \leq 2\eps$ and $D\phi_\eps = D\phi$, we get
\[\begin{aligned}
	& 0 \leq \lapi \phi^m + \phi && \quad\text{if $D\phi\neq 0$,} \\
	& 0 \leq \Lambda\left( D^2\phi^m \right) + \phi && \quad\text{if $D\phi = 0$.}
\end{aligned}\]
This shows $G$ is a viscosity subsolution of \eqref{E:eevp} according to the definition of \cite{CGG}.

Step 3. To prove $G$ is a viscosity supersolution take now a smooth function $\phi$ touching $G$ from below at a point
$x_0\in\Omega$. Let
\[
	\phi_\eps(x,\tau) \bydef \phi(x) - \eps - \oo{\eps\tau^2} + \frac{2\sigma_\eps}{\eps\tau},
\]
where $\sigma_\eps$ is chosen so that $\phi_\eps$ touches $v$ from below at some point $(x_\eps,\tau_\eps)$. Let us
check that such a choice is indeed possible. First observe that for $\sigma_\eps = \eps$ we get
\[
	\phi_\eps\left(x_0,\oo{\eps}\right) = \phi(x_0) = G(x_0) \geq v\left(x_0,\oo{\eps}\right),
\]
that is, $\phi_\eps$ crosses or touches $v$. On the other hand, letting $\tau_\eps^*$ be such that $\tau > \tau_\eps^*$
implies $v(x,\tau) > G(x) - \eps$ (recall that the convergence as $\tau\to\infty$ is uniform), if $\sigma_\eps <
-(\tau_\eps^*)^2\eps L_v/2$, where $L_v$ is the Lipschitz constant for $v$, then $\phi_\eps < v$. Indeed, for
$\tau\geq\tau_\eps^*$ we have $\phi_\eps(x,\tau) < G(x) - \eps < v(x,\tau)$ and for $\tau < \tau_\eps^*$ we compute
$\phi_{\eps,\tau}(x,\tau) = \frac{2}{\eps\tau^3} - \frac{2\sigma_\eps}{\eps\tau^2} > L_v$, which means we must have
$v(x,\tau) > \phi(x,\tau)$. Therefore, there must exist $\sigma_\eps \in \left( -(\tau_\eps^*)^2\eps L_v/2, \eps\right)$
such that $\phi_\eps$ touches $v$ from below at a point $(x_\eps,\tau_\eps)$. Note also that the above implies
$\tau_\eps > 1/\eps$, hence we have $\lim_{\eps\to 0} (x_\eps,\tau_\eps) = (x_0,+\infty)$. The rest of the argument goes
as in step 2 above.
\end{proof}

\section{The Cauchy problem}\label{S:Cauchy}
In this section we consider briefly the Cauchy problem
\begin{equation}\label{E:mu_cauchy}
	\begin{cases}
	u_t = k u \lapi u + |Du|^2, & \text{in $\Rd\times[0,T]$}, \\
	u(x,0) = u_0(x), & \text{for $x\in\Rd$}.
	\end{cases}
\end{equation}
When $u_0:\Rd\to\R$ is uniformly continuous, positive and has compact support we can readily apply the foregoing theory in a sufficiently large cylinder to obtain existence and uniqueness of a maximal solution, which necessarily has compact support as well. On the other hand, for positive data $u_0\geq 0 $, we can obtain the solution by considering first strictly positive data and approximating the problem posing it in $\Omega_r = B_{2r}(0) \times [0,T]$,
\begin{equation}\label{E:appmu_cauchy}
	\begin{cases}
	u_t = k \beta_c(u) \lapi u + |Du|^2, & \text{in $\Omega_r$}, \\
	u(x,t) = u_0^r(x), & \text{if $(x,t)\in\del_p \Omega_r$}.
	\end{cases}
\end{equation}
In principle we could take $u_0^r(x)=u_0(x)$, but for technical reasons we will have it depend on $r$. In any case,
we choose it so that $\{u_0^r\}$ is uniformly equicontinuous on compact sets, $u_0^r \geq c > 0$ and its modulus of
continuity is bounded by the modulus of $u_0$. For each $r>0$, let
\[
	u_0^r(x) = \begin{cases}
		u_0(x) & \text{if $|x|\leq r$,} \\
		M & \text{if $|x| = 2r$,} \\
		\max\left\{ u_0(x), M + \lambda\left[ u_0\left( r\frac{x}{|x|}\right) - M \right] \right\}
			& \text{if $|x| = 2r -\lambda r$}.
	\end{cases}
\]
\begin{theorem} Given $u_0:\Rd\to\R$ uniformly continuous, bounded and such that $u_0(x) \geq c >0$, there exists a
maximal bounded viscosity solution of \eqref{E:mu_cauchy}. The modulus of continuity of $u$ can be estimated in terms of the
modulus of continuity of $u_0$ and $\|u_0\|_\infty$.
\end{theorem}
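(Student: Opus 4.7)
The plan is to exhaust $\R^d$ by balls and take a monotone limit of Dirichlet problems on them. For each $r>0$, set $M = \|u_0\|_\infty$; the initial-boundary datum on $\Omega_r = B_{2r}(0)\times[0,T]$—namely $u_0^r$ on the bottom and the constant $M$ on the lateral boundary—is continuous and bounded below by $c$, so Theorem~\ref{T:exist_pos} produces a unique viscosity solution $u^r \in C(\overline{\Omega_r})$ satisfying $c \leq u^r \leq M$ by comparison against the constants. Note that $u_0^r$ agrees with $u_0$ on $B_r$ and is designed precisely so that its modulus of continuity on any fixed compact is controlled by that of $u_0$ once $r$ is large enough.

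Next I would establish monotonicity: for $r<r'$, the explicit construction gives $u_0^{r'}\leq u_0^r$ pointwise on $B_{2r}$ (both agree with $u_0$ on $B_r$, while on the annulus the interpolation weight $\lambda = 2-|x|/r$ used in $u_0^r$ puts more mass on $M$ than the corresponding weight for $u_0^{r'}$). Since $u^{r'}\leq M = u^r$ on $\partial B_{2r}\times(0,T]$, Theorem~\ref{T:comparison_pos_strong} applied on $\Omega_r$ yields $u^{r'}\leq u^r$ there. Hence $u(x,t)\bydef \lim_{r\to\infty}u^r(x,t)$ exists pointwise and satisfies $c\leq u\leq M$.

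The crucial technical step is to obtain equicontinuity of $\{u^r\}$ on every compact set $K\subset\R^d\times[0,T]$, uniform in $r$. Since $u_0^r$ coincides with $u_0$ on $B_r$ and the lateral face $\partial B_{2r}$ is separated from $K$ once $r$ is large, I would apply the interior Hölder and Lipschitz estimates of Theorems~\ref{T:holder_x} and~\ref{T:lip_x}, combined with the time estimate of Theorem~\ref{T:lip_t}, in their modulus-of-continuity versions. The constants in these estimates depend only on $M$, $c$, and the modulus of continuity of $u_0$ in a neighbourhood of $K$—not on $r$—so Arzelà--Ascoli combined with the monotonicity above upgrades pointwise convergence to locally uniform convergence $u^r\to u$, and shows that $u$ is continuous with $u(\cdot,0)=u_0$ and with a modulus of continuity estimated by that of $u_0$.

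Passing to the limit in the viscosity formulation is then routine along the lines of Step~2 in the proof of Theorem~\ref{T:exist_maximal}: a smooth test function touching $u$ from above (or from below, which is admissible at every point since $u\geq c>0$) is perturbed by constants $c_r\to 0$ to touch $u^r$ at nearby points, and the inequalities pass to $u$. Condition (ii) of Definition~\ref{D:visc_supsol} is vacuous because $u\geq c>0$. For maximality, any bounded viscosity solution $v$ with $v(\cdot,0)=u_0$ satisfies $v(\cdot,0)\leq u_0^r$ on $B_{2r}$ and $v\leq M = u^r$ on $\partial B_{2r}\times[0,T]$, so Theorem~\ref{T:comparison_pos_strong} gives $v\leq u^r$ on $\Omega_r$ and in the limit $v\leq u$. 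The hardest step will be the $r$-uniform equicontinuity near $\{t=0\}$: one has to verify that the modulus-of-continuity forms of the boundary estimates (Theorems~\ref{T:holder_bdry} and~\ref{T:lip_bdry}), when applied to the bottom face with datum $u_0^r$, give bounds that depend only on the modulus of $u_0$ and not on the interpolation used to reach $M$ near $\partial B_{2r}$.
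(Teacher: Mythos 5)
Your overall strategy---approximating by Dirichlet problems on $B_{2r}(0)\times[0,T]$ with extended data $u_0^r$, using the $r$-uniform estimates of Theorem~\ref{T:exist_pos} to get equicontinuity, extracting a locally uniform limit, and passing the viscosity inequalities to the limit---is exactly the paper's route. Two points need attention, however.

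First, the claimed monotonicity $u_0^{r'}\leq u_0^{r}$ on $B_{2r}$ for $r<r'$ is not correct in general. On the annulus $r'<|x|<2r$ one is comparing $M+\lambda[u_0(rx/|x|)-M]$ and $M+\lambda'[u_0(r'x/|x|)-M]$ with $\lambda'>\lambda$, but the two interpolations sample $u_0$ at \emph{different} base points $rx/|x|$ and $r'x/|x|$; if, say, $u_0$ is near $c$ at the former and near $M$ at the latter, the inequality reverses. This is not fatal, because you can drop monotonicity altogether: the paper simply uses the $r$-uniform modulus-of-continuity bound, the Arzel\`a--Ascoli theorem, and a diagonal argument to extract a locally uniformly convergent subsequence, and maximality (once proved) forces every subsequential limit to be the same function.

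Second, and this is the genuine gap, your maximality argument asserts that any bounded viscosity solution $v$ of \eqref{E:mu_cauchy} with $v(\cdot,0)=u_0\leq M$ satisfies $v\leq M$ on $\partial B_{2r}\times[0,T]$, but this does not follow from the hypotheses. The domain is unbounded, so the comparison principle on bounded cylinders gives you nothing until you first control $v$ at spatial infinity; being ``bounded'' only means $v\leq\|v\|_\infty$, not $v\leq M$. The paper handles this with a separate lemma (the analogue of \cite[Lemma~5.2]{BV}): one compares $v$ with the explicit classical strict supersolution $V(x,t)=N+b|x|^2/(2T-t)+\lambda t$ with $N=M+\eps$, $b=\eps$, $\lambda=O(\eps)$, which dominates any bounded function for $|x|$ large and exceeds $u_0$ at $t=0$; touching would contradict $V$ being a strict supersolution, so $v\leq V$, and $\eps\to 0$ gives $v\leq M$. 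Without this lemma, the comparison on $\Omega_r$ that you invoke is unjustified, and the maximality claim is unsupported.
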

\begin{proof}
Step 1. Let $u_r$ be the unique solution of \eqref{E:appmu_cauchy} given by Theorems~\ref{T:exist_pos}
and~\ref{T:comparison_pos_strong}. Fix $R>0$ and consider the family $\{u^r\}_{r>R}$. By the estimates of
Theorem~\ref{T:exist_pos}, this family is uniformly bounded and equicontinuous on $\Omega_R$. Therefore, using a diagonal
argument, we can extract a sequence which converges locally uniformly on $\Rd\times[0,T]$ to a continuous function $u$
whose modulus of continuity depends on the modulus of continuity of $u_0$ and $\|u_0\|_\infty$ (and is Lipschitz
continuous whenever $u_0$ has this property). Note also that $\inf_{\Rd} u = \liminf_r \inf_{\Rd} u_r \geq c > 0$.
The proof that $u$ is a viscosity solution goes as in the proof of Theorem~\ref{T:exist_maximal} above.

Step 2. To show that the solution obtained in Step 1 is maximal, let $v$ be another solution. We prove in the next lemma
that $v \leq M$. Since $u^r \geq M$ on the lateral boundary and $u_0^r \geq u_0 = v$ on $\{ t=0\}$, by comparison we
immediately obtain the result.
\end{proof}

In the proof of maximality above, we have used the following version of the technical Lemma~5.2 from \cite{BV}.
\begin{lemma}
If $u$ is a bounded viscosity solution of \eqref{E:mu_cauchy} such that $0 < c \leq u_0 \leq M$, then $u \leq M$
everywhere.
\end{lemma}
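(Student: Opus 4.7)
My plan is to construct a one-parameter family of explicit classical strict supersolutions $V_\eps$ of \eqref{E:mu} that grow at spatial infinity, allowing $V_\eps$ to dominate the bounded $u$ on the lateral boundary of arbitrarily large balls, while collapsing to the constant $M$ as $\eps\to 0$. The strict-supersolution comparison of Theorem~\ref{T:comparison_pos}, applied on a growing sequence of cylinders $B_R(0)\times[0,T]$, will give $u\le V_\eps$ everywhere, and the limit $\eps\to 0$ will yield $u\le M$.

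I will take $\psi(x):=\sqrt{1+|x|^2}$ and define $V_\eps(x,t):=M+\eps\,e^{Ct}\psi(x)$ for a large constant $C$ and small $\eps>0$. A direct computation gives $|D\psi|^2=|x|^2/\psi^2$ and $\lapi\psi=1/\psi^3$ (using $\psi^2-|x|^2=1$); by the $1$-homogeneity of $\lapi$,
\[
	\del_t V_\eps - k V_\eps\,\lapi V_\eps - |DV_\eps|^2 = \eps e^{Ct}\left[C\psi - \frac{kM}{\psi^3} - \eps e^{Ct}\left(1+\frac{k-1}{\psi^2}\right)\right].
\]
Because $\psi\ge 1$, choosing $C\ge 2kM+1$ and then $\eps$ small enough that $\eps e^{CT}(1+|k-1|)<1/2$ makes the bracket bounded below by a positive constant, uniformly in $(x,t)\in\Rd\times[0,T]$. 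Since $V_\eps\ge M\ge c$, $V_\eps$ will be a smooth classical strict supersolution of \eqref{E:mund}.

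For the comparison step, at $t=0$ one has $V_\eps(x,0)=M+\eps\psi(x)\ge M+\eps>M\ge u_0(x)$ strictly for every $x$. Using that $u\le N$ for some $N>0$, I will choose $R=R_\eps$ so large that $\eps\sqrt{1+R^2}\ge N-M+1$; then $V_\eps\ge N+1>u$ on $\del B_R(0)\times[0,T]$. Applying the strict-supersolution version of Theorem~\ref{T:comparison_pos} in $B_R(0)\times[0,T]$ gives $u\le V_\eps$ in this cylinder; since the same argument works for every $R'\ge R$, in fact $u\le V_\eps$ on $\Rd\times[0,T]$, and sending $\eps\to 0$ with $C$ fixed gives $u(x,t)\le M$ pointwise.

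The main obstacle will be the construction of the barrier: I need a supersolution that simultaneously (i)~grows at infinity fast enough to enclose the bounded $u$ on the lateral boundary of any large ball, (ii)~tends to $M$ pointwise as $\eps\to 0$, and (iii)~satisfies the strict supersolution inequality for \eqref{E:mu} uniformly on $\Rd\times[0,T]$. The linear-growth ansatz $\psi=\sqrt{1+|x|^2}$ is precisely tuned so that the bad quadratic gradient term $|DV_\eps|^2\le\eps^2 e^{2Ct}$ stays uniformly bounded in $x$ and can be absorbed into the growing time derivative $\del_t V_\eps\ge C\eps e^{Ct}\psi$ once $C$ is large and $\eps$ small; any slower spatial growth would fail condition~(i), any faster would produce a gradient term that cannot be controlled by $\del_t V_\eps$.
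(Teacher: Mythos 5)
Your barrier is correct and is genuinely different from the one in the paper: you take $V_\eps(x,t)=M+\eps\,e^{Ct}\sqrt{1+|x|^2}$, exploiting the identities $|D\psi|^2=|x|^2/\psi^2$ and $\lapi\psi=1/\psi^3$ for $\psi=\sqrt{1+|x|^2}$ to get a clean bracket, whereas the paper uses $V(x,t)=N+b\,|x|^2/(2T-t)+\lambda t$. Both grow at spatial infinity, dominate $u$ at $t=0$, and collapse to $M$ as the small parameter vanishes; your computation that the strict-supersolution inequality holds uniformly on $\Rd\times[0,T]$ for $C\ge 2kM+1$ and $\eps e^{CT}(1+|k-1|)<1/2$ is fine where $DV_\eps\ne 0$.

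The gap is in the comparison step. You invoke Theorem~\ref{T:comparison_pos}, but that theorem is formulated for viscosity sub- and supersolutions of the modified equation \eqref{E:mund} (with the cutoff $\beta_c$ in place of $u$) and has the standing hypothesis $u,v\ge c$ \emph{throughout} $Q$. Here $u$ is only assumed to be a bounded viscosity solution of \eqref{E:mu_cauchy}, i.e.\ of \eqref{E:mu}, with $u_0\ge c$ at $t=0$; nothing in the hypotheses ensures $u\ge c$ for later times, and without this $u$ need not be a viscosity subsolution of \eqref{E:mund}, so Theorem~\ref{T:comparison_pos} does not apply. The paper sidesteps this with a direct touching argument that needs no lower bound on $u$: since $V_\eps>u$ at $t=0$ and for all $|x|$ large uniformly in $t$, if $V_\eps$ failed to dominate $u$ there would exist an interior point $P_1$ where $V_\eps$ touches $u$ from above; applying the viscosity-subsolution Definition~\ref{D:visc_subsol} with $V_\eps$ as the test function yields $V_{\eps,t}(P_1)\le kV_\eps(P_1)\lapi V_\eps(P_1)+|DV_\eps(P_1)|^2$ (or the $\Lambda(D^2V_\eps)$ variant if $DV_\eps(P_1)=0$), contradicting the strict classical supersolution property. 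You should replace the appeal to Theorem~\ref{T:comparison_pos} with this argument; your barrier supports it verbatim. One more detail to record explicitly: at $x=0$ one has $DV_\eps=0$, so $\lapi V_\eps$ is undefined and the relevant condition is $V_{\eps,t}>kV_\eps\,\Lambda(D^2 V_\eps)$; since $D^2\psi(0)=I$, this reduces to $C>k(M+\eps e^{Ct})$, which your choice of constants guarantees, but the degenerate point should not be glossed over.
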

\begin{proof}
We consider the same function as in \cite[Lemma~5.2]{BV}
\[
	V(x,t) = N + b\frac{|x|^2}{2T-t} + \lambda t
\]
and easily check that $V$ is a classical strict supersolution if
\[
	1 > 2 (k+2) b \;\;\text{and}\;\; \lambda T (1-2kb) > 2kNb.
\]
These inequalities are satisfied if we select $b$ small enough and $\lambda$ not too small. In particular, this is true
for $b=\eps$, $N=M+\eps$ and $\lambda = \frac{8kM}{T} \eps = O(\eps)$ with $\eps$ small enough. This function is
strictly greater than $u_0$ at $t=0$ and, because we are assuming $u$ is bounded, it is also strictly above $u$ for
all large $|x|$ uniformly in $[0,T]$. If $V$ is not strictly larger than $u$ everywhere, there exists $P_1 = (x_1,t_1)
\in \Rd\times[0,T]$ where $V$ touches $u$ from above. At this point, from the definition of viscosity solution, we have
\[
	V_t(P_1) \leq kV(P_1)\lapi V(P_1) + |DV(P_1)|^2.
\]
This contradicts the fact that $V$ is a strict supersolution in a small parabolic neighbourhood of $P_1$, hence we
conclude that $u \leq V$ on $\Rd\times[0,T]$. Letting $\eps \to 0$ we conclude that $u \leq M$ everywhere.
\end{proof}

Using the same ideas as above, we can let $c\to 0$ to obtain maximal solution for nonnegative data.
\begin{theorem}
Given $u_0:\Rd\to\R$ uniformly continuous, bounded and such that $u_0(x) \geq 0$, there exists a
maximal viscosity solution of \eqref{E:mu_cauchy}. The modulus of continuity of $u$ can be estimated in terms of the
modulus of continuity of $u_0$ and $\|u_0\|_\infty$.
\end{theorem}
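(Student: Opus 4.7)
The plan is to mirror the strategy used for Theorem~\ref{T:exist_maximal}, approximating the data from above by strictly positive functions. Specifically, set $u_0^n := u_0 + 1/n$, which is uniformly continuous, bounded by $\|u_0\|_\infty + 1$, and satisfies $u_0^n \geq 1/n > 0$. By the previous theorem there exists a maximal bounded viscosity solution $u^n$ of \eqref{E:mu_cauchy} with datum $u_0^n$, and the modulus of continuity of $u^n$ depends only on $\|u_0\|_\infty$ and the modulus of continuity of $u_0$, hence is uniform in $n$. The barrier $V$ from the preceding lemma gives $u^n \leq \|u_0\|_\infty + 1$ uniformly.

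Next I would establish the monotonicity $u^{n+1} \leq u^n$. A direct global comparison on $\Rd$ is not available, so I would argue through the approximating problems from the construction of $u^n$: on each ball $\Omega_r = B_{2r}(0) \times [0,T]$ one takes the unique positive solution $u^{n,r}$ of \eqref{E:appmu_cauchy} with datum $u_0^{n,r}$, where $u_0^{n,r}$ is built from $u_0^n$ as in the construction preceding the last theorem. Arranging the construction so that $u_0^{n+1,r} \leq u_0^{n,r}$ on $\partial_p \Omega_r$, the strong comparison Theorem~\ref{T:comparison_pos_strong} yields $u^{n+1,r} \leq u^{n,r}$, and letting $r\to\infty$ gives $u^{n+1} \leq u^n$.

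The monotonicity together with the uniform equicontinuity shows that $\bar u := \lim_n u^n$ exists, is continuous, bounded, satisfies $\bar u(x,0)=u_0(x)$, and the convergence is locally uniform. Verifying that $\bar u$ is a viscosity subsolution and that it satisfies condition $(i)$ of Definition~\ref{D:visc_supsol} proceeds exactly as in Steps 2 and 3 of the proof of Theorem~\ref{T:exist_maximal}: one perturbs any smooth test function $\phi$ by constants $c_n\to 0$ so that $\phi + c_n$ touches $u^n$ from above (resp.\ from below at points where $u^n>0$), applies the viscosity inequality for $u^n$, and passes to the limit; the singular case $D\phi(P_0)=0$ is handled via Lemma~\ref{L:weak_sol_cond}. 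Condition $(ii)$ of Definition~\ref{D:visc_supsol} is obtained by observing that any classical moving free-boundary subsolution $w$ strictly separated from $\bar u$ at time $t_1$ is, for large $n$, also strictly separated from $u^n$, so $w$ cannot cross $u^n$, and passing to the limit.

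Finally, maximality is proved as in Step 4 of Theorem~\ref{T:exist_maximal}: if $v$ is any other viscosity solution of \eqref{E:mu_cauchy} with the same initial datum and $v(P_*)>\bar u(P_*)$ somewhere, then for $n$ large $v$ must touch some $u^n$ from below for the first time at a point $P_n=(x_n,t_n)$ with $t_n>0$ and $v(P_n)=u^n(P_n)>0$. One then localizes: by continuity one finds a parabolic cylinder $Q_n \subset B_R \times [t_n-\delta_n, t_n]$, on which $v>0$, $v<u^n$ on $\partial_p Q_n$, and Theorem~\ref{T:comparison_pos} yields a contradiction. The main obstacle is the usual one in the Cauchy setting --- comparison is only available at points where the lower function is strictly positive --- and it is resolved exactly by this localization around the first contact point, which is interior precisely because the data are ordered strictly: $v=u_0 < u_0 + 1/n = u^n$ at $t=0$.
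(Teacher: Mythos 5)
Your proposal is correct and follows the paper's approach: the paper itself only remarks that the result is obtained ``using the same ideas as above, letting $c\to 0$,'' and your write-up is a faithful fleshing-out of precisely that plan (approximate $u_0$ from above by $u_0+1/n$, invoke the preceding theorem for strictly positive Cauchy data, pass to the monotone locally uniform limit using the uniform modulus of continuity, and verify the viscosity inequalities and maximality by the same arguments used for Theorem~\ref{T:exist_maximal}).
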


\subsection{Large time behaviour}

Considering data $u_0$ with compact support, using the Barenblatt functions and the traveling waves solutions, it is
interesting that the ideas of the classical (PME) can be used to obtain properties for the support of super- and
subsolutions and in particular to obtain the asymptotic behaviour as $t\to\infty$.

\begin{prop}\label{P:supp_sols}
We have the following properties:
\begin{itemize}
\item[(a)] The support of any viscosity supersolution is nondecreasing in time and penetrates the whole space
as $t\to\infty$.
\item[(b)] The support of any viscosity solution expands in a continuous way.
\end{itemize}
In particular, any solution whose initial condition has compact support, has support in a set expanding like $O(t^\oo{m+1})$ as $t\to\infty$.
\end{prop}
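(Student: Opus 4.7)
The strategy is to sandwich the positivity set of $u$ between Barenblatt functions placed from below (to obtain monotonicity of the support and penetration of the whole space) and from above (to bound the rate of expansion). All these Barenblatts are classical moving free-boundary solutions, hence both classical sub- and supersolutions in the sense of Definition~\ref{D:cfbs}.

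For part (a), fix $(x_0, t_0)$ with $u(x_0, t_0) > 0$; by continuity there exist $\eta, r > 0$ with $u(\cdot, t_0) \geq \eta$ on $\overline{B_r(x_0)}$. I would place the small Barenblatt $w(x,t) \bydef \mathcal{B}_R(x - x_0, t - t_0 + t_*)$, whose support at time $t_0$ is $\overline{B_{R\, t_*^{1/(m+1)}}(x_0)}$ and whose peak value is $R^2/((m+1) t_*^{(m-1)/(m+1)})$. Choosing $R$ small (with $t_*$ fixed) makes both the radius and the peak arbitrarily small, giving the strict separation $w(\cdot, t_0) \prec u(\cdot, t_0)$. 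Then clause (ii) of Definition~\ref{D:visc_supsol} forces $w(\cdot, t) \leq u(\cdot, t)$ for all $t \geq t_0$; since the support of $w$ at time $t$ is the ball of radius $R (t - t_0 + t_*)^{1/(m+1)}$ around $x_0$, which is nondecreasing in $t$ and diverges as $t \to \infty$, this yields both the monotonicity of $\supp u(\cdot, t)$ and the eventual penetration of $\Rd$.

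The upper bound required in (b), and in the ``in particular'' assertion, would come from the ordering of maximal solutions. The argument from Section~\ref{S:maximal_solutions} transfers to the Cauchy setting by approximation on exhausting balls $B_{2r}(0)$ combined with Theorem~\ref{T:comparison_pos_strong} at the level of strictly positive data. Given $u_0$ compactly supported and bounded, I would choose $R_1, t_1 > 0$ with $u_0 \leq \mathcal{B}_{R_1}(\cdot, t_1)$ pointwise (possible because the Barenblatt peak diverges as $t_1 \downarrow 0$). Since the Barenblatt is itself a maximal solution, the ordering yields $u(\cdot, t) \leq \mathcal{B}_{R_1}(\cdot, t + t_1)$, so $\supp u(\cdot, t) \subset \overline{B_{R_1 (t + t_1)^{1/(m+1)}}(0)}$. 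Combined with part (a) applied at any interior point of $\supp u_0$, this traps the support between two balls of radius of order $t^{1/(m+1)}$. The continuous expansion in (b) then follows from the continuity in $t$ of these two enveloping radii, reinforced locally at each point of $\del \supp u(\cdot, t_0)$ by the planar traveling waves of Section~\ref{S:special_solutions}, which (being classical moving free-boundary solutions) can be placed just outside the support with small speed and forbid any jump.

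The main obstacle is the upper bound. The supersolution definition contains the free-boundary clause (ii) which supplies lower bounds essentially for free, but no symmetric clause appears in the subsolution definition, so one cannot compare $u$ as a viscosity subsolution directly with a Barenblatt sitting above it. The workaround is to invoke the ordering of maximal solutions together with the fact, established in Section~\ref{S:maximal_solutions}, that the Barenblatts and the traveling waves are themselves maximal; carrying this ordering over to the Cauchy problem requires care with the exhaustion by bounded cylinders and with the monotone limits used to construct the maximal solution.
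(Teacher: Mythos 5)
Your proposal is correct and takes essentially the same approach as the paper: the paper simply defers parts (a) and (b) to the proofs of Propositions~6.1 and 6.2 in \cite{BV} and obtains the rate by sandwiching between Barenblatt functions, which is precisely the combination of the supersolution free-boundary clause (small Barenblatt from below) and maximality ordering (Barenblatt and traveling wave from above) that you reconstruct. One small correction in your domination step: letting $t_1\downarrow 0$ makes the Barenblatt peak diverge but simultaneously shrinks its support, so it does not by itself dominate a compactly supported $u_0$; instead fix $t_1>0$ and enlarge $R_1$ so that both the support radius $R_1 t_1^{1/(m+1)}$ and the value of $\mathcal{B}_{R_1}(\cdot,t_1)$ on $\supp u_0$ exceed what is needed.
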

The proofs of \textit{(a)} and \textit{(b)} are identical to the proofs of \cite[Propositions~6.1 and 6.2]{BV} and the last statement follows from sandwiching the support of the solution between to Barenblatt functions. We can now prove our final theorem.

\begin{proof}[Proof of Theorem~\ref{thm.asbeh}]
Step 1. The first step is to sandwich $\rho(x,t_1)$ for some $t_1>0$ between
two 1-$d$ Barenblatt solutions  with 1-$d$ masses $0<M_1<M_2$ (note that $M_i=2\int_0^\infty \rho_i(r,0)dr$), and this has been explained above. Such bounds will preserved for all times $t\geq t_1$.

Step 2. Next, we copy from the study of asymptotic  behaviour of the 1-$d$ PME to define a family of rescaled solutions $\rho_\lambda$ for all $\lambda\geq 1$ as follows
\begin{equation}
\rho_\lambda(x,t) =\lambda^{\frac{1}{m+1}} \rho(\lambda^{\frac{1}{m+1}} x, \lambda t).
\end{equation}
It is easy to see that the $\rho_\lambda$ are maximal viscosity solutions of the Cauchy Problem (with rescaled initial data).
The bounds with Barenblatt solutions imply that on any compact time interval $[t_1,t_2]$ with $0<t_1<t_2<\infty$ the family
$\rho_\lambda$ is continuous, uniformly bounded, and supported in a uniform ball $B_{R_*}(0)$.

Step 3. Now we use Aleksandrov's principle, as explained for instance in \cite{CVW}, to show that
for a solution $\rho(x,t)$ with initial data $\rho_0(x)\geq 0$ supported in the ball $B_{R}(0)$ we have
for all $t\geq 0$ and all $r>R$
\[
	\inf_{|x|=r} \rho(x,t)= \max_{|x|=r+2R} \rho(x,t)
\]
If this is applied to the rescaled solutions, we get for all $|x|\geq R_\lambda = R\,\lambda^{-1/(m+1)}$
\[
	\inf_{|x|=r} \rho_\lambda(x,t)= \max_{|x|=r+2R_\lambda} \rho_\lambda(x,t)
\]

Step 4. We now fix $t=1$, $\lambda $ very large, so that $R_\lambda\leq \eps$ is very small, and define
\[
	\tilde\rho_1(r)=\inf_{|x|=r} \rho_\lambda(x,1), \quad
	\tilde\rho_2(r)=\max_{|x|=r} \rho_\lambda(x,1),
\]
We easily verify that $\tilde\rho_2(r),\, \tilde\rho_2(r)$ are nonnegative and radially symmetric functions,
both supported in the same ball $B_{R_*}(0)$, they are nonincreasing as functions of $r$ for $r\geq \eps$,
and we also have
\[
	\tilde\rho_2(r)\geq \tilde\rho_1(r)\geq \rho_1(r+\eps)
\]
for all $r\geq \eps$. It is then easy to verify that the 1-$d$ mass of $\rho_2(r) - \rho_1(r)$ is less than
$C\eps $.

Step 5. If $\tilde\rho_1(r,t)$ and $\tilde\rho_2(r,t)$ are the corresponding solutions of the 1-$d$ PME with
initial data at $t=1$ given by $\tilde\rho_1(r)$ and $\tilde\rho_2(r)$, respectively, we have for all $t\geq 1$
\[
	\tilde\rho_1(r,t)\leq \rho_\lambda(x,t)\leq \tilde\rho_2(r,t)
\]
In view of the convergence result for the 1-$d$ PME, cf.\ \cite[Theorem 18.1]{V}, the result follows.
\end{proof}

\appendix

\section{Radial solutions with separation of variables}

Let us look more closely at solutions of the form
\[
	\rho(x,t) = T(t)F(x)
\]
with $T$ and $F$ as in \eqref{E:sep_T} and \eqref{E:sep_F}. We further assume that $F$ is radial, $F(x) = F(r)$, with $r = |x|$.

\subsection{Radial solutions with \mbox{$\lambda >0$}}

Let $g(r) = \lambda^{- \frac{m}{m-1}} F^m(r)$. Then from \eqref{E:sep_F} we get
\[
	g{''} + g^p = 0,
\]
This equation can be integrated and yields
\[
	\frac{g(r)}{\sqrt{a}} \Fto\left(\oo{p+1},\oo{2};1+\oo{p+1},\frac{g^{p+1}(r)}{a}\right)
		= \pm kr + C,
\]
where $\Fto$ is the hypergeometric function and $C$ and $a$ are arbitrary constants, $a>0$, and
$k=\sqrt{\frac{2}{p+1}}$. Let us for convenience define $H_p(z) \bydef \frac{z}{\sqrt{a}}
\Fto\left(\oo{p+1},\oo{2},1+\oo{p+1},\frac{z^{p+1}}{a}\right)$, which satisfies $H_p'(z) =
\frac{1}{\sqrt{a - z^{p+1}}}$.
Note that $H_p$ is left-continuous at $z=a^{\oo{p+1}}$. In fact, we can compute
\[ H_p(0) = 0 \;\;\text{and}\;\; H_p(a^{\oo{p+1}}) = a^{\oo{p+1}-\oo{2}} \frac{\sqrt{\pi}\,
		\Gamma\left( 1 + \oo{p+1} \right)}{\Gamma\left(\oo{2} + \oo{p+1} \right)} \defby A_p.
\]
We have solved the equation for $g$ implicitly, $H_p(g(r)) = \pm kr + C$, where $k = \sqrt{2/(p+1)}$.
Since we want $g\geq0$, we need to define $H_p$ on the interval $I_p = [0,a^{\oo{p+1}}]$.
On this interval, $H'_p > 0$, therefore $H_p$ has an inverse, $G_p:J_p\to I_p$, where $J_p=[0,A_p]$,
and $G'_p > 0$. Then we have
\begin{equation}\label{E:g_sep_var}
	g(r) = G_p(\pm kr +C).
\end{equation}
Depending on whether we choose the plus or minus sign on this identity, we get two different types
of solution.

\subsubsection{Solutions defined on a ball}
Taking the minus sign in \eqref{E:g_sep_var} we get $g(r) = G_p(k(R-r))$, which is defined and smooth for
$r\in [0,R]$ with $kR = A_p$, that is
\[
	R = \frac{\sqrt{\pi}\,\Gamma\left( 1 + \oo{p+1} \right)}{\Gamma\left(\oo{2} + \oo{p+1} \right)}
		\sqrt{\frac{2}{p+1}} \,\,a^{\frac{m-1}{2(m+1)}}.
\]
The function $g$ is radially decreasing, vanishes at $r=R$ and has a maximum at $r=0$, $g(0)= a^{\oo{p+1}}
= M R^{\frac{2}{m-1}}$, where $M$ is a constant depending on $m$.

The solution $\rho(x,t) = T(t)F(x)$ is given by \eqref{E:sep_T} and
\begin{equation}\label{E:sep_F1}
	F(x) = \lambda^{\oo{m-1}} \left[ G_p \left(\sqrt{\frac{2m}{m+1}}(R -|x|) \right)
		\right]^{\oo{m}},
\end{equation}
that is,
\begin{equation}\label{E:stythos_rho}
	\rho(x,t) = \oo{[(m-1)(t-t_0)]^{\oo{m-1}}} \left[ G_p
		\left(\sqrt{\frac{2m}{m+1}}(R - |x|) \right) \right]^{\oo{m}}.
\end{equation}
For the transformed equation \eqref{E:mu} we get the solution
\begin{equation}\label{E:stythos_u}
	u(x,t) = \frac{m}{(m-1)^2(t-t_0)} \left[ G_p \left(\sqrt{\frac{2m}{m+1}}(R - |x|) \right)
		\right]^{\frac{m-1}{m}}.
\end{equation}
Note that $u$ and $\rho$ have infinite gradient at the boundary of the domain $\{|x| = R\}$.

\subsubsection{Solutions defined on an annulus}
If we take the plus sign in \eqref{E:g_sep_var} instead, we get the solutions defined on an annulus
$\{R_1 < r < R_2\}$, where $R_2 = A_p/k$. These solutions are
\begin{equation}\label{E:colon_rho}
	\rho(x,t) = \oo{[(m-1)(t-t_0)]^{\oo{m-1}}} \left[ G_p
		\left(\sqrt{\frac{2m}{m+1}}(|x|-R_1) \right) \right]^{\oo{m}},
\end{equation}
for \eqref{E:ipme}, and
\begin{equation}\label{E:colon_u}
	u(x,t) = \frac{m}{(m-1)^2(t-t_0)} \left[ G_p \left(\sqrt{\frac{2m}{m+1}}(|x|-R_1) \right)
		\right]^{\frac{m-1}{m}}
\end{equation}
for \eqref{E:mu}. These functions vanish on the inner boundary of the annulus, are radially increasing
and have maxima on the outer boundary equal to the maxima which the solutions of \eqref{E:stythos_rho} and
\eqref{E:stythos_u} attain at $x=0$. If $-A_p/k < R_1 \leq 0$, then the same formulas define solutions on
the annulus $\{0<r<R_2\}$, which are continuous but not differentiable at $x=0$.

\subsection{Radial solutions with \mbox{$\lambda <0$}}

For negative $\lambda$, we let $\ell = |\lambda|$ and still assuming that $F$ is radial, we let
$g(r) = \ell^{-\frac{m}{m-1}} F^m(r)$. Then $g$ solves
\[
	g{''} - g^p = 0.
\]
Integrating once as in the previous case, we now get
\[
	\frac{g{'}}{\sqrt{a + g^{p+1}}} = \pm\sqrt{\frac{2}{p+1}}.
\]
We now need to distinguish three cases: $a>0$, $a=0$ and $a<0$.

\subsubsection{Case $a>0$}
We compute $I_p(g(r)) = \pm\sqrt{\frac{2}{p+1}} r + C$, where $I_p(z) = \frac{z}{\sqrt{a}}
\sideset{_{2}}{_{1}}{\operatorname{F}} \left(\oo{p+1},\oo{2}; 1+\oo{p+1},-\frac{z^{p+1}}{a}\right)$.
This function is 1--to--1 from $[0,\infty)$ onto itself with $I_p(0) = 0$, $I_p{'}(0) =
\oo{\sqrt{a}}$, $I_p(z) \sim \frac{2}{1-p} z^{\frac{1-p}{2}}$ as $z\to +\infty$. Let $J_p$ denote the
inverse of $I_p$ on this interval. The function $J_p$ is also a bijection of $[0,+\infty)$ onto itself,
$J_p(0) = 0$, $J_p{'}(0) = \sqrt{a}$, $J_p(z)$ grows like $z^{\frac{2}{1-p}}$ as $z\to +\infty$ and
$J_p{'}(z)$ grows like $z^{\frac{1+p}{1-p}} = z^{\frac{m+1}{m-1}}$. Since $g(r) = J_p\left(
\pm \sqrt{\frac{2}{p+1}}r+C)\right)$ we have two possible types of solution depending on which sign we
take. If we choose the plus sign,
\[
	F(r) = \ell^{\oo{m-1}} \left[ J_p\left( \sqrt{\frac{2m}{m+1}} (|x| - R) \right)\right]^{\oo{m}},
\]
\[
	\rho(x,t) = \oo{[(m-1)(t_0-t)]^{\oo{m-1}}}\left[ J_p\left( \sqrt{\frac{2m}{m+1}}
		(|x|-R) \right)\right]^{\oo{m}}
\]
and
\[
	u(x,t) = \frac{m}{(m-1)^2(t_0 -t)} \left[ J_p\left( \sqrt{\frac{2m}{m+1}}
		(|x|-R) \right)\right]^{\frac{m-1}{m}}.
\]
For $R>0$, these functions are defined on $\{|x| \geq R\} \times (-\infty,t_0)$ with $u,\rho
\equiv +\infty$ the top portion of its boundary, $t=t_0$, $u,\rho = 0$ on the lateral boundary, $|x| =
R$, and again $Du,D\rho = +\infty$ on the lateral boundary. As $|x| \to \infty$, $\rho$ grows like
$|x|^{\frac{2}{m-1}}$ and $u$ grows like $|x|^2$. For $R=0$ the situation is identical. The function is
defined in all space but $D\rho$ has a singularity at $x=0$. Finally, if $R<0$, then $\rho$ is defined for
all $x$ but the gradient, though bounded near the origin, is discontinuous at $x=0$.

If we take a minus sign in the implicit identity for $g$, using the same inverse for $I_p$, we get
functions defined on a ball with a finite but discontinuous gradient at the origin.

\subsubsection{Case $a=0$}
The integral for $g$ is $\frac{2}{1-p} g^{\frac{1-p}{2}}(r) = \pm\sqrt{\frac{2}{p+1}} r + C$,
therefore
\[
	g(r) = \left[ \frac{1-p}{\sqrt{2(p+1)}} r + C \right]^{\frac{2}{1-p}}.
\]
In this case we have
\begin{equation}\label{E:sep_F3}
	F(x) = \ell^{\oo{m-1}} \left[ \frac{m-1}{\sqrt{2m(m+1)}} (|x|-k) \right]^{\frac{2}{m-1}},
\end{equation}
\[
	\rho(x,t) = \left( \frac{(m-1)^2}{2m(m+1)} \right)^{\oo{m-1}}
		\left[ \frac{(|x|-R)^2}{(t_0-t)} \right]^{\oo{m-1}}
\]
and
\[
	u(x,t) = \oo{2(m+1)} \frac{(|x|-R)^2}{(t_0-t)}.
\]
For $R>0$ these functions can be defined either on the interior or the exterior of the ball $\{|x| \leq R\}$.
For $R < 0$ they are smooth in the whole space minus the origin and for $R=0$ the function $u$ is a smooth solution
of \eqref{E:mu}.

\subsubsection{Case $a<0$}
In this case the integral of $g$ is $K_p(g) = \pm \sqrt{\frac{2}{p+1}} r + C$, where $K_p:[|a|^{\oo{p+1}},
+\infty) \to [0,+\infty)$ satisfies $K_p{'}(z) = \oo{\sqrt{z^{p+1} - |a|}}$ and $K_p(a^{\oo{p+1}}) =
0$. This function grows at infinity like $z^{\frac{1-p}{2}}$ and at $z=|a|^{\oo{p+1}}$ we have $K_p =
0$ and $K_p{'}(z) \sim \oo{\sqrt{(p+1)(z-|a|^{\oo{p+1}})}}$. If we denote by $L_p:[0,+\infty) \to
[|a|^{\oo{p+1}},+\infty)$ the inverse of $K_p$, we have $L_p(0) = |a|^{\oo{p+1}}$, $L_p{'}(z) \sim
\sqrt{(p+1)}z$ as $z\to 0$, while as $z\to +\infty$, $L_p(z) \sim z^{\frac{2}{1-p}}$ and $L_p{'}(z)
\sim z^{\frac{3+p}{1-p}}$. We then have
\begin{equation}\label{E:sep_F4}
	F(x) = l^{\oo{m-1}} \left[ L_p \left( \pm\sqrt{\frac{2m}{m+1}} |x| + C\right) \right]^{\oo{m}}
\end{equation}
and
\[
	\rho(x,t) = \oo{[(m-1)(t_0-t)]^{\oo{m-1}}} \left[ L_p \left( \pm\sqrt{\frac{2m}{m+1}} |x| + C \right)
		\right]^{\oo{m}}.
\]
Since $L_p(z)$ is like a power $z^2$ at $z=0$, the conclusions regarding $\rho$ and $u$ are similar to the
case $a=0$, we have functions defined on the interior and on the exterior of $\{|x|=r\}$.


\section*{Comments and open problems}

We do not analyze here the fine behavior nor regularity for the free boundary. This might in fact be a key component for a full uniqueness result as it happens with the (PME) (see \cite{CV}), which is possibly the main open problem related to this equation.

Another idea we do not explore is how to develop a numerical scheme, both for the equation and the free boundary.

We are planning to pursue the ideas of interpolated models mentioned in the introduction separately.

\medskip
\vskip .5cm

\noindent \textsc{Acknowledgments.} The authors are grateful for the support given by the
Spanish Project MTM2008-06326-C02-01. The first author is also supported by the Portuguese Project
PTDC-MAT-098060-2008.


\end{document}